\newtheorem{theorem}{Theorem}
\newtheorem{remark}[theorem]{Remark}
\newtheorem{proposition}[theorem]{Proposition}
\newtheorem{lemma}[theorem]{Lemma}
\newcommand{\R}{{\mathbb R}}
\def\duno{\partial_1}
\def\ddue{\partial_2}
\def\dtre{\partial_3}
\def\dt{\partial_t}
\def\ds{\displaystyle}
\def\eps{\epsilon}
\title[Plasma-vacuum interface]{Well-posedness of the linearized\\ plasma-vacuum interface problem}
\author[p. secchi and y. trakhinin]{}
\subjclass[2000]{Primary: 76W05; Secondary: 35Q35, 35L50, 76E17, 76E25, 35R35, 76B03}
\keywords{Ideal compressible Magneto-hydrodynamics, plasma-vacuum interface, free boundary}
\email{paolo.secchi@ing.unibs.it}
\email{trakhin@math.nsc.ru}
\thanks{The first author PS is supported by the national research project PRIN 2009 \lq\lq Equations of Fluid Dynamics of Hyperbolic Type and Conservation Laws\rq\rq. Part of this work was done during the fellowship of the second author YT at the Landau Network-Centro Volta-Cariplo Foundation spent at the Department of Mathematics of the University of Brescia in Italy.
YT would like to warmly thank the Department of Mathematics of the University of Brescia
for its kind hospitality during the visiting period.}
\date{\today}
\begin{document}
\maketitle

\centerline{\scshape Paolo Secchi}
\smallskip
{\footnotesize
\centerline{Dipartimento di Matematica, Facolt\`a di Ingegneria, Universit\`a di Brescia}
\centerline{Via Valotti, 9, 25133 Brescia, Italy}
}

\medskip

\centerline{\scshape Yuri Trakhinin}
\smallskip
{\footnotesize
\centerline{Sobolev Institute of Mathematics, Koptyug av. 4, 630090 Novosibirsk, Russia}
}
\bigskip


\bigskip

\begin{abstract}
We consider the free boundary problem for the plasma-vacuum interface in ideal compressible magnetohydrodynamics (MHD). In the plasma region the flow is governed by the usual compressible MHD equations, while in the vacuum region we consider the {\it pre-Maxwell dynamics} for the magnetic field. At the free-interface we assume that the total pressure is continuous and that the magnetic field is tangent to the boundary. The plasma density does not go to zero continuously at the interface, but has a jump, meaning that it is bounded away from zero in the plasma region and it is identically zero in the vacuum region. Under a suitable stability
condition satisfied at each point of the plasma-vacuum interface,
we prove the well-posedness of the linearized problem in conormal Sobolev spaces.
\end{abstract}

\section{Introduction}
\label{sect1}

Consider the equations of ideal compressible MHD:
\begin{equation}
\left\{
\begin{array}{l}
\partial_t\rho  +{\rm div}\, (\rho {v} )=0,\\[3pt]
\partial_t(\rho {v} ) +{\rm div}\,(\rho{v}\otimes{v} -{H}\otimes{H} ) +
{\nabla}q=0, \\[3pt]
\partial_t{H} -{\nabla}\times ({v} {\times}{H})=0,\\[3pt]
\partial_t\bigl( \rho e +\frac{1}{2}|{H}|^2\bigr)+
{\rm div}\, \bigl((\rho e +p){v} +{H}{\times}({v}{\times}{H})\bigr)=0,
\end{array}
\right.
\label{1}
\end{equation}
where $\rho$ denotes density, $v\in\mathbb{R}^3$ plasma velocity, $H \in\mathbb{R}^3$ magnetic field, $p=p(\rho,S )$ pressure, $q =p+\frac{1}{2}|{H} |^2$ total pressure, $S$ entropy, $e=E+\frac{1}{2}|{v}|^2$ total energy, and  $E=E(\rho,S )$ internal energy. With a state equation of gas, $\rho=\rho(p ,S)$, and the first principle of thermodynamics, \eqref{1} is a closed system.

System (\ref{1}) is supplemented by the divergence constraint
\begin{equation}
{\rm div}\, {H} =0
\label{2}
\end{equation}
on the initial data. As is known, taking into account \eqref{2}, we can easily symmetrize system \eqref{1} by rewriting it in the nonconservative form
\begin{equation}
\left\{
\begin{array}{l}
{\displaystyle\frac{\rho_p}{\rho}}\,{\displaystyle\frac{{\rm d} p}{{\rm d}t} +{\rm div}\,{v} =0},\qquad
\rho\, {\displaystyle\frac{{\rm d}v}{{\rm d}t}-({H}\cdot\nabla ){H}+{\nabla}
q  =0 },\\[9pt]
{\displaystyle\frac{{\rm d}{H}}{{\rm d}t} - ({H} \cdot\nabla ){v} +
{H}\,{\rm div}\,{v}=0},\qquad
{\displaystyle\frac{{\rm d} S}{{\rm d} t} =0},
\end{array}\right. \label{3}
\end{equation}
where $\rho_p\equiv\partial\rho/\partial p$ and ${\rm d} /{\rm d} t =\partial_t+({v} \cdot{\nabla} )$.
A different symmetrization is obtained if we consider $q$ instead of $p$.
In terms of $q $ the equation for the pressure in \eqref{3} takes
the form
\begin{equation}
\begin{array}{ll}\label{equq}
\displaystyle\frac{\rho_p}{\rho }\left\{\frac{{\rm d} q}{{\rm d}t} -H
\cdot\displaystyle\frac{{\rm d} H}{{\rm d}t} \right\}+{\rm div}\,{v}=0,
\end{array}
\end{equation}
where it is understood that now
$\rho =\rho(q  -|H  |^2/2,S)$ and similarly for
$\rho_p $.
Then we derive ${\rm div}\,{v} $ from \eqref{equq} and rewrite the equation
for the magnetic field in \eqref{3} as
\begin{equation}
\begin{array}{ll}\label{equH}
\displaystyle\frac{{\rm d} H}{{\rm d}t} -(H \cdot\nabla)v  -
 \frac{\rho_p}{\rho }H\left\{\frac{{\rm d} q}{{\rm d}t} -H
\cdot\frac{{\rm d} H}{{\rm d}t} \right\}=0.
\end{array}
\end{equation}
Substituting \eqref{equq}, \eqref{equH} in \eqref{3} then gives the following symmetric system
\begin{equation}
\begin{array}{ll}\label{3'}
\left(\begin{matrix}
{\rho_p/\rho}&\underline
0&-({\rho_p/\rho})H &0 \\
\underline 0^T&\rho
I_3&0_3&\underline 0^T\\
-({\rho_p/\rho})H^T&0_3&I_3+({\rho_p/\rho})H\otimes H&\underline 0^T\\
0&\underline 0&\underline 0&1
\end{matrix}\right)\dt
\left(\begin{matrix}
q \\ v \\
H\\S \end{matrix}\right)+\\
 \\
 +
\left( \begin{matrix}
(\rho_p/\rho)
v \cdot\nabla&\nabla\cdot&-({\rho_p/\rho})Hv \cdot\nabla&0\\
\nabla&\rho v \cdot\nabla I_3&-H \cdot\nabla I_3&\underline 0^T\\
-({\rho_p/\rho})H^T v \cdot\nabla&-H \cdot\nabla I_3&(I_3+({\rho_p/\rho})H\otimes H)
v \cdot\nabla&\underline 0^T\\
0&\underline 0&\underline 0&v \cdot\nabla
\end{matrix}\right)
\left(\begin{matrix}q \\ v \\ H\\S \end{matrix}\right)
=0\,,
\end{array}
\end{equation}
where $\underline 0=(0,0,0)$.
Given this symmetrization, as the unknown we can choose the vector $ U =U (t, x )=(q, v,H, S)$. For the sake of brevity we write system (\ref{3'}) in the form
\begin{equation}
\label{4}
A_0(U )\partial_tU+\sum_{j=1}^3A_j(U )\partial_jU=0,
\end{equation}
which is symmetric hyperbolic provided the hyperbolicity condition $A_0>0$ holds:
\begin{equation}
\rho  >0,\quad \rho_p >0. \label{5}
\end{equation}

Plasma-vacuum interface problems for system \eqref{1} appear in the mathematical modeling of plasma confinement by magnetic fields (see, e.g., \cite{Goed}). In this model the plasma is confined inside a perfectly conducting rigid wall and isolated from it by  a vacuum region, due to the effect of strong magnetic fields.
This subject is very popular since the 1950--70's, but most of theoretical studies are devoted to finding stability criteria of equilibrium states. The typical work in this direction is the classical paper of Bernstein et al.
\cite{BFKK}.  In astrophysics, the plasma-vacuum interface problem can be used for modeling the motion of a star or the solar corona when magnetic fields are taken into account.

According to our knowledge there are still no well-posedness results for full ({\it non-stationary}) plasma-vacuum models. More precisely, an energy a priori estimate in Sobolev spaces for the linearization of a plasma-vacuum interface problem (see its description just below) was proved in \cite{trakhinin10}, but the existence of solutions to this problem remained open. In fact, the proof of existence of solutions is the main goal of the present paper.

Let $\Omega^+(t)$ and $\Omega ^-(t)$ be space-time domains occupied by the plasma and the vacuum respectively. That is, in the domain
$\Omega^+(t)$ we consider system \eqref{1} (or \eqref{4}) governing the motion of an ideal plasma and in the domain $\Omega^-(t)$  we have the elliptic (div-curl) system
\begin{equation}
\nabla \times \mathcal{H} =0,\qquad {\rm div}\, \mathcal{H}=0,\label{6}
\end{equation}
describing the vacuum magnetic field $\mathcal{H}\in\mathbb{R}^3$. Here, as in \cite{BFKK,Goed},
we consider so-called {\it pre-Maxwell dynamics}. That is, as usual in nonrelativistic MHD, we neglect the displacement current $(1/c)\,\partial_tE$, where $c$ is the speed of light and $E$ is the electric field.

Let us assume that the interface between plasma and vacuum is given by a hypersurface $\Gamma (t)=\{F(t,x)=0\}$. It is to be determined and moves with the velocity of plasma particles at the boundary:
\begin{equation}
\frac{{\rm d}F }{{\rm d} t}=0\quad \mbox{on}\ \Gamma (t)\label{7}
\end{equation}
(for all $t\in [0,T]$). As $F$ is an unknown of the problem, this is a free-boundary problem. The plasma variable $U$ is connected with the vacuum magnetic field
$\mathcal{H}$  through the relations
\cite{BFKK,Goed}
\begin{equation}
 [q]=0,\quad  H\cdot N=0, \quad   \mathcal{H}\cdot N=0,\quad \mbox{on}\ \Gamma (t),
\label{8}
\end{equation}
where $N=\nabla F$ and $[q]= q|_{\Gamma}-\frac{1}{2}|\mathcal{H}|^2_{|\Gamma}$ denotes the jump of the total pressure across the interface.
These relations together with \eqref{7} are the boundary conditions at the interface $\Gamma (t)$.

As in \cite{lindblad2,trakhinin09}, we will assume that for problem \eqref{1}, \eqref{6}--\eqref{8} the hyperbolicity conditions \eqref{5} are assumed to be satisfied in $\Omega^+(t)$ up to the boundary $\Gamma(t)$, i.e., the plasma density does not go to zero continuously, but has a jump (clearly in the vacuum region $\Omega^-(t)$ the density is identically zero). This assumption is compatible with the continuity of the total pressure in \eqref{8}.

Since the interface moves with the velocity of plasma particles at the boundary, by introducing the Lagrangian coordinates one can reduce the original problem to that in a fixed domain. This approach has been recently employed with success in a series of papers on the Euler equations in vacuum, see \cite{coutandlindbladshkoller,coutandshkoller1,coutandshkoller2,coutandshkoller3,lindblad2}. However, as, for example, for contact discontinuities in various models of fluid dynamics (e.g., for current-vortex sheets \cite{cmst,trakhinin09arma}), this approach seems hardly applicable for problem \eqref{1}, \eqref{6}--\eqref{8}. Therefore, we will work in the Eulerian coordinates and for technical simplicity we will assume that the space-time domains $\Omega^\pm (t)$ have the following form.


Let us assume that the moving interface $\Gamma(t)$ takes the form $$
\Gamma(t) := \{ (x_1,x') \in \R^3 \, , \, x_1=\varphi(t,x')\} \, ,
$$
where $t \in [0,T]$ and $x'=(x_2,x_3)$. Then we have $\Omega^\pm(t)=\{x_1\gtrless \varphi(t,x')\}$.
With our parametrization of $\Gamma (t)$, an equivalent formulation
of the boundary conditions \eqref{7}, \eqref{8} at the interface is
\begin{equation}
\partial_t\varphi =v_N,\quad [q]=0,\quad H_N=0,\quad \mathcal{H}_N=0 \quad \mbox{on}\ \Gamma (t),\label{15}
\end{equation}
where $v_N=v\cdot N$, $H_N=H\cdot N$, $\mathcal{H}_N=\mathcal{H}\cdot N$, $N=(1,-\partial_2\varphi ,-\partial_3\varphi )$.
\\
System \eqref{4}, \eqref{6}, \eqref{15} is supplemented with initial conditions
\begin{equation}
\begin{array}{ll}
{U} (0,{x})={U}_0({x}),\quad {x}\in \Omega^{+} (0),\qquad
\varphi(0,{x})=\varphi_0({x}),\quad {x}\in\Gamma ,\\
\mathcal{H}(0,x)=
\mathcal{H}_0(x),\quad {x}\in \Omega^{-}(0),\label{11}
\end{array}
\end{equation}
From the mathematical point of view, a natural wish is to find conditions on the initial data
providing the existence and uniqueness on some time interval $[0,T]$ of a solution $({U},\mathcal{H},\varphi)$ to problem \eqref{4}, \eqref{6}, \eqref{15}, \eqref{11} in Sobolev spaces. Since \eqref{1} is a system of hyperbolic conservation laws that can produce shock waves and other types of strong discontinuities
(e.g., current-vortex sheets \cite{trakhinin09arma}), it is natural to expect to obtain only local-in-time existence theorems.

We must regard the boundary conditions on $H$ in \eqref{15} as the restriction on the initial data \eqref{11}. More precisely, we can prove that a solution of  \eqref{4}, \eqref{15} (if it exists for all $t\in [0,T]$) satisfies
\[
{\rm div}\, {H} =0 \quad \mbox{in}\ \Omega^+ (t)\quad \mbox{and}\quad  H_N=0\quad \mbox{on}\ \Gamma (t),
\]
for all $t\in [0,T]$, if the latter were satisfied at $t=0$, i.e., for the initial data \eqref{11}. In particular, the fulfillment of ${\rm div}\, {H} =0$ implies that systems \eqref{1} and \eqref{4} are equivalent on solutions of problem \eqref{4}, \eqref{15}, \eqref{11}.

\subsection{An equivalent formulation in the fixed domain}

Let us denote
$$
\Omega^\pm := \R^3 \cap \{x_1\gtrless 0 \} \, ,\qquad \Gamma :=\R^3\cap\{x_1=0\} \, .
$$
We want to reduce the free boundary problem \eqref{4}, \eqref{6},  \eqref{15}, \eqref{11} to the fixed
domains $\Omega^\pm$. For this purpose we introduce a suitable change of variables that is inspired by Lannes \cite{lannes}. In all what follows, $H^s(\omega)$ denotes the Sobolev space of order $s$ on a domain $\omega$.
The following lemma shows how to lift functions from
$\Gamma$ to $\R^3$. An important point is the regularization of one half derivative of the lifting function $\Psi$ w.r.t. the given function $\varphi$. For instance, there is no such regularization in the lifting function chosen in \cite{majda1,metivier2}.

\begin{lemma}
\label{lemma1}

Let $m\ge 3$. For all $\eps>0$ there exists a continuous linear map $\varphi\in H^{m-0.5}(\R^2) \mapsto
\Psi \in H^m(\R^3)$ such that $\Psi(0,x')=\varphi(x')$, $\duno \Psi (0,x')=0$ on $\Gamma$, and
\begin{equation}
\begin{array}{ll}\label{diffeopiccolo}
\|\duno\Psi\|_{L^\infty(\R^3)}\le\eps \, \| \varphi\|_{H^{2}(\R^2)}.
\end{array}
\end{equation}

\end{lemma}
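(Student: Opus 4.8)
The plan is to construct $\Psi$ explicitly by a Fourier-multiplier (Littlewood--Paley type) lifting in the tangential variables $x'$, with the variable $x_1$ playing the role of a scale parameter, so that the smoothing of one half derivative is built into the cutoff. Concretely, write $\widehat{\varphi}(\xi')$ for the Fourier transform of $\varphi$ in $x'\in\R^2$, fix a function $\chi\in C_c^\infty(\R^2)$ with $\chi\equiv 1$ near the origin and $\mathrm{supp}\,\chi\subset\{|\xi'|\le 1\}$, fix a small $\delta>0$ (to be chosen in terms of $\eps$), and set
\begin{equation}
\label{defPsi}
\Psi(x_1,x') := \mathcal{F}^{-1}_{\xi'\to x'}\Bigl( \chi(\delta\, x_1\, \xi')\, \widehat{\varphi}(\xi') \Bigr)(x').
\end{equation}
Then $\Psi(0,x')=\mathcal{F}^{-1}(\chi(0)\widehat\varphi)=\varphi(x')$ since $\chi(0)=1$, and $\duno\Psi(x_1,x')=\mathcal{F}^{-1}(\delta\,\xi'\cdot(\nabla\chi)(\delta x_1\xi')\,\widehat\varphi)$, which vanishes at $x_1=0$ because $\nabla\chi$ vanishes near the origin (so $\xi'(\nabla\chi)(0)=0$); hence $\duno\Psi(0,x')=0$ on $\Gamma$. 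Linearity and continuity of $\varphi\mapsto\Psi$ are immediate from the boundedness of the multiplier. (A cosmetic multiplication by a fixed cutoff in $x_1$ can be inserted if one wants $\Psi$ compactly supported in $x_1$; it does not affect any of the estimates.)

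Next I would prove the two quantitative bounds. For the $L^\infty$ estimate \eqref{diffeopiccolo}: on the support of the multiplier for $\duno\Psi$ we have $\delta|x_1||\xi'|\lesssim 1$, hence $|\xi'|\lesssim 1/(\delta|x_1|)$, but more usefully the multiplier $\delta\xi'(\nabla\chi)(\delta x_1\xi')$ is supported where $|\xi'|\sim 1/(\delta|x_1|)$ and is bounded in absolute value by $C/|x_1|$; writing $\duno\Psi(x_1,\cdot)$ as convolution of $\varphi$ with the inverse Fourier transform of this multiplier and using that the latter has $L^1$-norm $\lesssim 1$ uniformly (by scaling, since $\chi$ is Schwartz) while gaining a factor from frequency localization, one estimates
\[
\|\duno\Psi(x_1,\cdot)\|_{L^\infty(\R^2)}\lesssim \frac{1}{|x_1|}\,\|\varphi\|_{L^\infty}\wedge \frac{1}{\delta|x_1|^{?}}\cdots
\]
The clean way: frequency localization at scale $|\xi'|\sim 1/(\delta|x_1|)$ gives, via Bernstein, $\|\duno\Psi(x_1,\cdot)\|_{L^\infty}\lesssim \frac{1}{|x_1|}\,(\tfrac{1}{\delta|x_1|})^{1}\,\|P_{\sim 1/(\delta|x_1|)}\varphi\|_{L^2}$ and then by the $H^2$ norm the high-frequency piece is $\lesssim (\delta|x_1|)^{2}\,(\tfrac{1}{|x_1|})(\tfrac{1}{\delta|x_1|})\|\varphi\|_{H^2}=\delta\|\varphi\|_{H^2}$, the powers of $|x_1|$ cancelling. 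Taking the sup over $x_1$ yields $\|\duno\Psi\|_{L^\infty(\R^3)}\le C\delta\|\varphi\|_{H^2(\R^2)}$, and choosing $\delta=\eps/C$ gives \eqref{diffeopiccolo}.

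For the $H^m(\R^3)$ bound I would use Plancherel in $x_1$ as well: writing $x_1$-derivatives as above and $x'$-derivatives as multiplication by $\xi'$, each monomial $\duno^k\partial_{x'}^\alpha\Psi$ has, for fixed $\xi'$, the form $g_k(\delta x_1|\xi'|)\,|\xi'|^{k+|\alpha|}\,\widehat\varphi(\xi')$ where $g_k$ is Schwartz; integrating $|x_1|$ out produces a factor $\int|g_k(\delta x_1|\xi'|)|^2\,dx_1 = \frac{c_k}{\delta|\xi'|}$, so
\[
\|\duno^k\partial_{x'}^\alpha\Psi\|_{L^2(\R^3)}^2 \le \frac{c_k}{\delta}\int_{\R^2}|\xi'|^{2(k+|\alpha|)-1}|\widehat\varphi(\xi')|^2\,d\xi' \le \frac{c_k}{\delta}\,\|\varphi\|_{H^{k+|\alpha|-1/2}(\R^2)}^2.
\]
Summing over $k+|\alpha|\le m$ and using $H^{m-1/2}(\R^2)\hookleftarrow$ the top term gives $\|\Psi\|_{H^m(\R^3)}\le C_{m,\delta}\,\|\varphi\|_{H^{m-1/2}(\R^2)}$, which is exactly the asserted continuity (for each fixed $\eps$, hence fixed $\delta$, the constant is allowed to depend on $\eps$). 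The hypothesis $m\ge 3$ is used only to ensure $H^2\hookrightarrow L^\infty$ in $\R^2$ makes the right-hand side of \eqref{diffeopiccolo} finite and, more relevantly, that the construction interacts correctly with the later change-of-variables argument; for the lemma itself any $m\ge 1$ works.

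The main obstacle — really the only subtle point — is organizing the scaling bookkeeping in the two estimates so that the powers of $|x_1|$ cancel exactly and the small parameter $\delta$ (equivalently $\eps$) is isolated as a genuine multiplicative gain in the $L^\infty$ bound without destroying finiteness of the $H^m$ norm; this is precisely the ``half-derivative regularization'' the remark before the lemma alludes to, and it is the reason a naive harmonic extension (as in \cite{majda1,metivier2}) does not suffice. Everything else is routine Fourier analysis: boundedness of smooth compactly supported multipliers, Plancherel in the normal variable, and Bernstein's inequality for the frequency-localized pieces.
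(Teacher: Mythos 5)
Your overall strategy is the same as the paper's (a Fourier--multiplier lifting $\chi(\text{scale}\cdot x_1 D)\varphi$, Plancherel in $x_1$ to gain the half derivative, Cauchy--Schwarz/Bernstein in $\xi'$ for the $L^\infty$ smallness), but your concrete construction has a genuine defect at low tangential frequencies. With $\Psi=\mathcal{F}^{-1}\bigl(\chi(\delta x_1\xi')\,\hat\varphi(\xi')\bigr)$ the symbol is homogeneous in $\xi'$: for $|\xi'|$ small, $\chi(\delta x_1\xi')$ equals $1$ on the huge interval $|x_1|\lesssim (\delta|\xi'|)^{-1}$, so Plancherel in $x_1$ gives exactly
\begin{equation*}
\|\Psi\|^2_{L^2(\R^3)}=\frac{1}{\delta}\int_{\R^2}\frac{c(\xi'/|\xi'|)}{|\xi'|}\,|\hat\varphi(\xi')|^2\,d\xi' ,
\end{equation*}
and the step in your $H^m$ estimate where you bound $\int|\xi'|^{2(k+|\alpha|)-1}|\hat\varphi|^2\,d\xi'$ by $\|\varphi\|^2_{H^{k+|\alpha|-1/2}}$ is false precisely for $k+|\alpha|=0$: the weight $|\xi'|^{-1}$ is not dominated by $\langle\xi'\rangle^{-1}$ near $\xi'=0$, and one can choose $\hat\varphi$ (e.g. $|\hat\varphi(\xi')|^2=|\xi'|^{-2}(\log(1/|\xi'|))^{-2}$ near the origin, compactly supported) with $\varphi\in H^s(\R^2)$ for all $s$ but $\int|\xi'|^{-1}|\hat\varphi|^2=\infty$. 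So your map does not send $H^{m-0.5}(\R^2)$ into $H^m(\R^3)$ — it does not even land in $L^2(\R^3)$ — and the claimed continuity fails. Moreover, the parenthetical fix (``a cosmetic multiplication by a fixed cutoff in $x_1$ ... does not affect any of the estimates'') is not cosmetic: writing $\duno(\theta\Psi)=\theta'\Psi+\theta\,\duno\Psi$, the term $\theta'\Psi$ is of size $\sim\|\varphi\|_{H^2}$ on the fixed annulus where $\theta'\neq0$ (take $\hat\varphi$ supported in $|\xi'|\le c/\delta$, so that $\Psi=\varphi$ there), independently of $\delta$, which destroys the key smallness \eqref{diffeopiccolo}.

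The repair is exactly what the paper does: replace the homogeneous scale $|\xi'|$ by the inhomogeneous one, i.e. take $\Psi=\chi(x_1\langle D\rangle)\varphi$ with $\langle\xi'\rangle=(1+|\xi'|^2)^{1/2}$; then $\int_\R|\chi(x_1\langle\xi'\rangle)|^2dx_1=c\,\langle\xi'\rangle^{-1}$, every term $\duno^j\Psi$ is controlled by $\|\varphi\|_{H^{m-0.5}(\R^2)}$ including the zero-order one, and the $\eps$-smallness is obtained not through a dilation parameter $\delta$ in the symbol but by choosing $\chi$ with support in $[-M,M]$ and $|\chi'|\le 2/M$, which is the same mechanism as your $\delta=1/M$ written at the level of the cutoff. (Alternatively, your construction can be salvaged by also scaling the $x_1$-cutoff, e.g. $\theta(\delta x_1)$, so that $\theta'=O(\delta)$, but then you must redo the $H^m$ bookkeeping with the $\delta$-dependent constants; the inhomogeneous symbol is cleaner.) A final small point: the requirement $m\ge3$ is not irrelevant for the lemma itself, since the right-hand side of \eqref{diffeopiccolo} needs $\varphi\in H^2(\R^2)$, i.e. $m-0.5\ge2$; your remark that ``any $m\ge1$ works'' is not accurate as stated.
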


\noindent We give the proof of Lemma \ref{lemma1} in Section \ref{prooflemma1}
at the end of this article. The following lemma gives the time-dependent version of Lemma \ref{lemma1}.

\begin{lemma}
\label{lemma2}
Let $m \ge3$ be an integer and let $T>0$. For all $\eps>0$ there exists a continuous linear map $\varphi\in \cap_{j=0}^{m-1}
{\mathcal C}^j([0,T];H^{m-j-0.5}(\R^2)) \mapsto \Psi \in \cap_{j=0}^{m-1} {\mathcal C}^j([0,T];H^{m-j}(\R^3))$
such that $\Psi(t,0,x')=\varphi(t,x')$,  $\duno \Psi (t,0,x')=0$ on $\Gamma$, and
\begin{equation}
\begin{array}{ll}\label{diffeopiccolo2}
\|\duno\Psi\|_{{\mathcal C}([0,T];L^\infty(\R^3))}\le\eps \, \| \varphi\|_{{\mathcal C}([0,T];H^{2}(\R^2))}.
\end{array}
\end{equation}
Furthermore, there exists a constant $C>0$ that is independent of $T$ and only depends on $m$, such that
\begin{multline*}
\forall \, \varphi\in \cap_{j=0}^{m-1} {\mathcal C}^j([0,T];H^{m-j-0.5}(\R^2)) \, ,\quad
\forall \, j=0,\dots,m-1 \, ,\quad \forall \, t \in [0,T] \, ,\\
\| \partial_t^j \Psi (t,\cdot) \|_{H^{m-j}(\R^3)} \le C \,
\| \partial_t^j \varphi (t,\cdot) \|_{H^{m-j-0.5}(\R^2)} \, .
\end{multline*}
\end{lemma}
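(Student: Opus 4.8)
The plan is to reduce Lemma \ref{lemma2} to Lemma \ref{lemma1} by performing the lifting at each fixed time and then checking that the regularity in $t$ is preserved. More precisely, let $\varphi \in \cap_{j=0}^{m-1}\mathcal{C}^j([0,T];H^{m-j-0.5}(\R^2))$ and, for each $t\in[0,T]$, let $\Psi(t,\cdot) := \mathcal{L}\varphi(t,\cdot)$, where $\mathcal{L}\colon H^{m-0.5}(\R^2)\to H^m(\R^3)$ is the continuous linear map furnished by Lemma \ref{lemma1} (with the same $\eps$). Since $\mathcal{L}$ is linear and bounded, and since $\partial_t$ commutes with $\mathcal{L}$ (the latter acts only in the $x$-variables), one formally has $\partial_t^j\Psi(t,\cdot) = \mathcal{L}\big(\partial_t^j\varphi(t,\cdot)\big)$ for $j=0,\dots,m-1$. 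The boundary identities $\Psi(t,0,x')=\varphi(t,x')$ and $\duno\Psi(t,0,x')=0$ hold pointwise in $t$ because they hold for $\mathcal{L}$ at each fixed slice.

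The main technical point is to justify that $\partial_t^j\Psi \in \mathcal{C}([0,T];H^{m-j}(\R^3))$, i.e.\ that differentiation in $t$ really does commute with the spatial operator $\mathcal{L}$ at the level of classical derivatives. First I would observe that $t\mapsto \mathcal{L}\varphi(t,\cdot)$ is continuous from $[0,T]$ into $H^m(\R^3)$: this follows from continuity of $t\mapsto\varphi(t,\cdot)$ into $H^{m-0.5}(\R^2)$ composed with the bounded linear map $\mathcal{L}$. For the higher derivatives one argues inductively: if $\partial_t^{j-1}\varphi \in \mathcal{C}^1([0,T];H^{m-j-0.5+1}(\R^2))\subset \mathcal{C}^1([0,T];H^{m-j-0.5}(\R^2))$ with derivative $\partial_t^j\varphi$, then by boundedness and linearity of $\mathcal{L}$ the $H^m$-valued difference quotients of $\mathcal{L}\partial_t^{j-1}\varphi$ converge to $\mathcal{L}\partial_t^j\varphi$; hence $\partial_t^{j}\big(\mathcal{L}\varphi\big) = \mathcal{L}\partial_t^j\varphi$ in $\mathcal{C}([0,T];H^{m-j}(\R^3))$ (using $\mathcal{L}\colon H^{m-j-0.5}\to H^{m-j}$, which holds since $\mathcal{L}$ is the fixed degree-reducing lifting and one may also invoke the half-derivative gain built into the construction of Lemma \ref{lemma1}). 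Strictly, the cleanest route is to note that the construction of $\Psi$ in Section \ref{prooflemma1} is given by an explicit Fourier-multiplier/convolution formula in $x$ (a mollified Poisson-type extension), so that the commutation with $\partial_t$ and the mapping properties on the scale $H^{m-j-0.5}(\R^2)\to H^{m-j}(\R^3)$ are immediate from Plancherel; I would phrase the argument this way to avoid any abstract functional-analytic subtleties.

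Next I would establish the quantitative bound
\[
\|\partial_t^j\Psi(t,\cdot)\|_{H^{m-j}(\R^3)} \le C\,\|\partial_t^j\varphi(t,\cdot)\|_{H^{m-j-0.5}(\R^2)},
\]
with $C$ depending only on $m$ and, crucially, not on $T$. This is exactly the operator-norm bound for $\mathcal{L}$ between the relevant Sobolev spaces applied to the function $\partial_t^j\varphi(t,\cdot)$; the $T$-independence is automatic because $\mathcal{L}$ is a fixed spatial operator with no reference to the time interval. One must check that the same single map $\mathcal{L}$ delivers all the estimates $H^{m-j-0.5}(\R^2)\to H^{m-j}(\R^3)$ for $j=0,\dots,m-1$ simultaneously; again, if $\mathcal{L}$ is a Fourier-multiplier lifting this is transparent, and in any case Lemma \ref{lemma1} can be read as providing such a map for the top index $m$, with the lower-index estimates following from the explicit form of the multiplier.

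Finally, the smallness estimate \eqref{diffeopiccolo2} follows directly from \eqref{diffeopiccolo}: for each $t$,
\[
\|\duno\Psi(t,\cdot)\|_{L^\infty(\R^3)} \le \eps\,\|\varphi(t,\cdot)\|_{H^2(\R^2)},
\]
and taking the supremum over $t\in[0,T]$ gives $\|\duno\Psi\|_{\mathcal{C}([0,T];L^\infty(\R^3))}\le \eps\,\|\varphi\|_{\mathcal{C}([0,T];H^2(\R^2))}$. Continuity of $t\mapsto\duno\Psi(t,\cdot)$ in $L^\infty(\R^3)$ is inherited from the $j=0$ regularity statement ($\Psi\in\mathcal{C}([0,T];H^m(\R^3))$ with $m\ge 3$, so $\duno\Psi\in\mathcal{C}([0,T];H^{m-1}(\R^3))\hookrightarrow\mathcal{C}([0,T];L^\infty(\R^3))$ by Sobolev embedding since $m-1\ge 2>3/2$). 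I expect the only real obstacle to be the bookkeeping in the commutation $\partial_t^j\mathcal{L}=\mathcal{L}\partial_t^j$ at the level of $\mathcal{C}^j$ regularity; once the lifting of Lemma \ref{lemma1} is written as an explicit $x$-Fourier multiplier, everything reduces to Plancherel and the definition of classical vector-valued derivatives, so I would carry out that step first and in full detail, deferring to Section \ref{prooflemma1}.
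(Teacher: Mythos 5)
Your proposal is correct and follows essentially the same route as the paper: the authors also treat $t$ as a parameter, apply the lifting of Lemma \ref{lemma1} slicewise, and use the linearity of the explicit Fourier-multiplier construction $\Psi=\chi(x_1\langle D\rangle)\varphi$ to get the commutation $\partial_t^j\Psi(\varphi)=\Psi(\partial_t^j\varphi)$, from which the $T$-independent estimates at each level $H^{m-j-0.5}(\R^2)\to H^{m-j}(\R^3)$ and the smallness bound \eqref{diffeopiccolo2} follow exactly as you argue.
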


\noindent The proof of Lemma \ref{lemma2} is also postponed to Section \ref{prooflemma1}. The diffeomorphism
that reduces the free boundary problem \eqref{4}, \eqref{15}, \eqref{11} to the
fixed domains $\Omega^\pm$ is given in the following lemma.

\begin{lemma}
\label{lemma3}
Let $m \ge3$ be an integer. For all $T>0$, and
for all $\varphi \in \cap_{j=0}^{m-1} {\mathcal C}^j([0,T];H^{m-j-0.5}(\R^2))$, satisfying without loss of generality $\| \varphi\|_{{\mathcal C}
([0,T];H^{2}(\R^2))} \le 1$, there exists a function $\Psi \in \cap_{j=0}^{m-1} {\mathcal C}^j([0,T];H^{m-j}(\R^3))$
such that
the function
\begin{equation}
\label{change}
\Phi(t,x) := \big( x_1 +\Psi(t,x),x' \big) \, , \qquad (t,x) \in [0,T]\times \R^3 \, ,
\end{equation}
defines an $H^m$-diffeomorphism of $\R^3$ for all $t \in [0,T]$.
Moreover, there holds $\partial^j_t (\Phi - Id) \in {\mathcal C}([0,T];H^{m-j}(\R^3))$ for $j=0,\dots, m-1$,
$\Phi(t,0,x')=(\varphi(t,x'),x')$, $\duno \Phi(t,0,x')=(1,0,0)$.
\end{lemma}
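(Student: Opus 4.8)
The plan is to take for $\Psi$ the lifting provided by Lemma \ref{lemma2} with a fixed small value of $\eps$, and then to read off every assertion about $\Phi$ from the corresponding property of $\Psi$. First I would apply Lemma \ref{lemma2} with, say, $\eps = 1/2$; this yields $\Psi \in \cap_{j=0}^{m-1}\mathcal{C}^j([0,T];H^{m-j}(\R^3))$ with $\Psi(t,0,x') = \varphi(t,x')$, $\duno\Psi(t,0,x') = 0$ on $\Gamma$, the estimate $\|\duno\Psi\|_{\mathcal{C}([0,T];L^\infty(\R^3))} \le \tfrac12\|\varphi\|_{\mathcal{C}([0,T];H^2(\R^2))}$, and $\|\partial_t^j\Psi(t,\cdot)\|_{H^{m-j}(\R^3)} \le C\|\partial_t^j\varphi(t,\cdot)\|_{H^{m-j-0.5}(\R^2)}$. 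Under the normalization $\|\varphi\|_{\mathcal{C}([0,T];H^2(\R^2))} \le 1$ this gives $|\duno\Psi(t,x)| \le 1/2$ on $[0,T]\times\R^3$, hence
\[
\det D_x\Phi(t,x) = 1 + \duno\Psi(t,x) \in \big[\tfrac12,\tfrac32\big] ,
\]
so that $\Phi(t,\cdot)$ is everywhere a local diffeomorphism of $\R^3$.

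Next I would check that $\Phi(t,\cdot)$ is in fact a global bijection of $\R^3$. Since $\Phi(t,\cdot)$ preserves each plane $\{x' = \mathrm{const}\}$ and acts on it as the one-dimensional map $x_1 \mapsto x_1 + \Psi(t,x_1,x')$, whose derivative $1+\duno\Psi$ is everywhere $\ge 1/2$ and which tends to $\pm\infty$ as $x_1\to\pm\infty$ (because $\Psi$ is bounded), this map is a strictly increasing bijection of $\R$; hence $\Phi(t,\cdot)$ is a bijection of $\R^3$. Being simultaneously a bijection and a local $\mathcal{C}^1$-diffeomorphism --- recall $m\ge 3$, so $\Psi(t,\cdot)\in H^m(\R^3)\hookrightarrow \mathcal{C}^1(\R^3)$ --- it is a global $\mathcal{C}^1$-diffeomorphism of $\R^3$. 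The decomposition $\Phi - Id = (\Psi,0,0)$ then gives $\partial_t^j(\Phi - Id)\in\mathcal{C}([0,T];H^{m-j}(\R^3))$ for $j=0,\dots,m-1$ directly from Lemma \ref{lemma2}, while the boundary relations $\Psi(t,0,x')=\varphi(t,x')$ and $\duno\Psi(t,0,x')=0$ translate into $\Phi(t,0,x')=(\varphi(t,x'),x')$ and $\duno\Phi(t,0,x')=(1,0,0)$.

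It remains to upgrade ``$\mathcal{C}^1$-diffeomorphism'' to ``$H^m$-diffeomorphism'' by showing $\Phi(t,\cdot)^{-1} - Id\in H^m(\R^3)$; this is the only genuinely nonlinear point and I expect it to be the main obstacle. Writing $\Phi(t,\cdot)^{-1}(y) = (\Theta(t,y),y')$, the scalar function $\Theta$ solves the implicit equation $\Theta(t,y) = y_1 - \Psi(t,\Theta(t,y),y')$, so that $\vartheta(t,y):=\Theta(t,y) - y_1$ is the fixed point of the map $\mathcal{N}$ defined by $\mathcal{N}(\sigma)(y) = -\Psi(t,\,y_1+\sigma(y),\,y')$. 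I would verify that $\mathcal{N}$ maps a suitable ball of $H^m(\R^3)$ into itself and is a contraction there: $\mathcal{N}(0)(y) = -\Psi(t,y)$, which lies in $H^m(\R^3)$ with norm controlled by Lemma \ref{lemma2}, while Moser-type composition estimates, combined with the $L^\infty$-smallness $\|\duno\Psi\|_{L^\infty}\le 1/2$, bound the Lipschitz constant of $\mathcal{N}$ strictly below $1$ once the lower-order terms are absorbed --- this is the classical fact that a near-identity $H^m$-map ($m>n/2+1$) whose Jacobian stays bounded away from $0$ has its inverse in the same class (cf. Lannes \cite{lannes}). Continuity in $t$ of $\vartheta$, and of $\partial_t^j\vartheta$ for $j\le m-1$, then follows from the continuous dependence of the fixed point on $\Psi(t,\cdot)$. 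The reduction to $\|\varphi\|_{\mathcal{C}([0,T];H^2(\R^2))}\le 1$ is a harmless rescaling. The delicate point is precisely this last fixed-point argument at top order, but it is routine given the smallness of $\duno\Psi$ built into Lemma \ref{lemma2}.
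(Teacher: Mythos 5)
Your proof is correct and takes essentially the same route as the paper: both rest entirely on choosing $\eps$ small in Lemma \ref{lemma2} so that $\duno\Phi_1=1+\duno\Psi\ge 1/2$, from which the diffeomorphism property and the remaining assertions follow. The points you spell out in addition (global bijectivity via monotonicity of $x_1\mapsto x_1+\Psi(t,x_1,x')$ on each plane $x'=\mathrm{const}$, and the $H^m$ regularity of the inverse via a fixed-point/composition argument) are exactly the details the paper leaves implicit in ``the other properties of $\Phi$ follow directly from Lemma \ref{lemma2}'', so they are a correct elaboration rather than a different approach.
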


\begin{proof}[Proof of Lemma \ref{lemma3}]
The proof follows directly from Lemma \ref{lemma2} because
$$
\duno \Phi_1(t,x) =1 +\duno \Psi(t,x)\ge 1-\| \duno\Psi(t,\cdot) \|_{{\mathcal C}([0,T];L^{\infty}(\R^3))}
\ge 1 -\eps \, \|\varphi\|_{{\mathcal C}([0,T];H^{2}(\R^2))} \ge 1/2 \, ,
$$
provided $\eps$ is taken sufficiently small, e.g. $\eps<1/2$. The other properties of $\Phi$ follow directly from Lemma \ref{lemma2}.
\end{proof}

We introduce the change of independent variables defined by \eqref{change} by setting
\[
\widetilde{U}(t,x ):= {U}(t,\Phi (t,x)),\quad
\widetilde{\mathcal{H}}(t,x ):= \mathcal{H}(t,\Phi (t,x)).
\]
Dropping for convenience tildes in $\widetilde{U}$ and $\widetilde{\mathcal{H}}$, problem \eqref{4}, \eqref{6} \eqref{15}, \eqref{11} can be reformulated on the fixed
reference domains $\Omega^\pm$ as
\begin{equation}
\mathbb{P}(U,\Psi)=0\quad\mbox{in}\ [0,T]\times \Omega^+,\quad \mathbb{V}(\mathcal{H},\Psi)=0\quad\mbox{in}\ [0,T]\times \Omega^-,\label{16}
\end{equation}
\begin{equation}
\mathbb{B}(U,\mathcal{H},\varphi )=0\quad\mbox{on}\ [0,T] \times\Gamma,\label{17}
\end{equation}
\begin{equation}
(U,\mathcal{H})|_{t=0}=(U_0,\mathcal{H}_0)\quad\mbox{in}\ \Omega^+\times\Omega^-,\qquad \varphi|_{t=0}=\varphi_0\quad \mbox{on}\ \Gamma,\label{18}
\end{equation}
where $\mathbb{P}(U,\Psi)=P(U,\Psi)U$,
\[
P(U,\Psi)=A_0(U)\partial_t +\widetilde{A}_1(U,\Psi)\partial_1+A_2(U )\partial_2+A_3(U )\partial_3,
\]
\[
\widetilde{A}_1(U,\Psi )=\frac{1}{\partial_1\Phi_1}\Bigl(
A_1(U )-A_0(U)\partial_t\Psi -\sum_{k=2}^3A_k(U)\partial_k\Psi \Bigr),
\]
\[
\mathbb{V}(\mathcal{H},\Psi)=\left(
\begin{array}{c}
\nabla\times \mathfrak{H}\\
{\rm div}\,\mathfrak{h}
\end{array}
\right),
\]
\[
\mathfrak{H}=(\mathcal{H}_1\partial_1\Phi,\mathcal{H}_{\tau_2},\mathcal{H}_{\tau_3}),\quad
\mathfrak{h}=(\mathcal{H}_{N},\mathcal{H}_2\partial_1\Phi_1,\mathcal{H}_3\partial_1\Phi_1),
\]
\[
\mathcal{H}_{N}=\mathcal{H}_1-\mathcal{H}_2\partial_2\Psi -\mathcal{H}_3\partial_3\Psi,\quad
\mathcal{H}_{\tau_i}=\mathcal{H}_1\partial_i\Psi +\mathcal{H}_i,\quad i=2,3,
\]
\[
\mathbb{B}(U,\mathcal{H},\varphi )=\left(
\begin{array}{c}
\partial_t\varphi -v_{N |x_1=0}\\ {[}q{]} \\ \mathcal{H}_{N |x_1=0}
\end{array}
\right),\quad [q]=q_{|x_1=0}-\frac{1}{2}|\mathcal{H}|^2_{x_1=0}
 ,
\]
\[
v_{N}=v_1- v_2\partial_2\Psi - v_3\partial_3\Psi .
\]

To avoid an overload of notation we have denoted by the same symbols $v_N,\mathcal{H}_N$ here above and $v_N,\mathcal{H}_N$ as in \eqref{15}. Notice that
$v_{N |x_1=0}=v_1- v_2\partial_2\varphi - v_3\partial_3\varphi,$ $\mathcal{H}_{N |x_1=0}=
\mathcal{H}_1- \mathcal{H}_2\partial_2\varphi - \mathcal{H}_3\partial_3\varphi $, as in the previous definition in \eqref{15}.

We did not include in problem \eqref{16}--\eqref{18} the equation
\begin{equation}
{\rm div}\, h=0\quad\mbox{in}\ [0,T]\times \Omega^+,\label{19}
\end{equation}
and the boundary condition
\begin{equation}
H_{N}=0\quad\mbox{on}\ [0,T]\times\Gamma,\label{20}
\end{equation}
where $h=(H_{N},H_2\partial_1\Phi_1,H_3\partial_1\Phi_1)$,
$H_{N}=H_1-H_2\partial_2\Psi -H_3\partial_3\Psi$,
because they are just restrictions on the initial data \eqref{18}. More precisely, referring to \cite{trakhinin09arma} for the proof, we have the following proposition.

\begin{proposition}
Let the initial data \eqref{18} satisfy \eqref{19} and \eqref{20} for $t=0$.
If $(U,\mathcal{H},\varphi )$ is a solution  of problem \eqref{16}--\eqref{18}, then this solution satisfies \eqref{19} and \eqref{20} for all $t\in [0,T]$.
\label{p1}
\end{proposition}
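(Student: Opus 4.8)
The plan is to recognise \eqref{19} and \eqref{20} as the images, under the change of variables \eqref{change}, of the two classical \emph{involutions} of ideal MHD --- the constraint ${\rm div}\, H=0$ in the plasma region and the condition $H\cdot N=0$ on the free interface --- and to propagate each of them by an elementary transport (ODE-along-characteristics) argument. First I would invoke Lemma \ref{lemma3}: since $\Phi(t,\cdot)$ is an $H^m$-diffeomorphism of $\R^3$ carrying $\Omega^\pm$ onto $\Omega^\pm(t)=\{x_1\gtrless\varphi(t,x')\}$, any solution $(U,\mathcal H,\varphi)$ of \eqref{16}--\eqref{18} corresponds, via $U(t,\Phi^{-1}(t,\cdot))$, to a solution of the original free-boundary problem \eqref{4}, \eqref{6}, \eqref{15}, \eqref{11} in the moving domains. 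A routine computation with the Piola identity for $\Phi$ then yields
$$
({\rm div}\, h)(t,x)=(\partial_1\Phi_1)(t,x)\,\big({\rm div}\, H\big)\!\big(t,\Phi(t,x)\big),\qquad
H_{N}|_{x_1=0}(t,x')=\big(H\cdot N\big)\!\big(t,\varphi(t,x'),x'\big),
$$
where $h=(H_N,H_2\partial_1\Phi_1,H_3\partial_1\Phi_1)$ is as before \eqref{19}, $H_N=H_1-H_2\partial_2\Psi-H_3\partial_3\Psi$, and on the right $N=(1,-\partial_2\varphi,-\partial_3\varphi)$. Since $\partial_1\Phi_1\ge 1/2$ by Lemma \ref{lemma3}, the identities \eqref{19} and \eqref{20} hold for all $t$ as soon as ${\rm div}\, H=0$ in $\Omega^+(t)$ and $H\cdot N=0$ on $\Gamma(t)$, and these are what I would establish, assuming both hold at $t=0$.

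For the interior involution I would use that a solution $U=(q,v,H,S)$ of \eqref{4} satisfies, as part of the symmetric system \eqref{3'} (equivalently \eqref{3}), the magnetic-field evolution equation ${\rm d}H/{\rm d}t-(H\cdot\nabla)v+H\,{\rm div}\, v=0$ in $\Omega^+(t)$, with ${\rm d}/{\rm d}t=\partial_t+(v\cdot\nabla)$. Taking the divergence of this identity, the quadratic terms collapse: $\partial_i\big(v_j\partial_jH_i-H_j\partial_jv_i\big)=(v\cdot\nabla)({\rm div}\, H)-(H\cdot\nabla)({\rm div}\, v)$, while $\partial_i(H_i\,{\rm div}\, v)=({\rm div}\, H)({\rm div}\, v)+(H\cdot\nabla)({\rm div}\, v)$ supplies the missing $(H\cdot\nabla)({\rm div}\, v)$; everything else cancels, leaving the homogeneous transport equation
$$
\frac{{\rm d}}{{\rm d}t}\big({\rm div}\, H\big)+({\rm div}\, v)\,{\rm div}\, H=0\qquad\text{in }\Omega^+(t).
$$
Because the interface moves with the plasma velocity (the first relation in \eqref{15}), the flow of $v$ maps $\Omega^+(0)$ onto $\Omega^+(t)$ and the characteristics of ${\rm d}/{\rm d}t$ issuing from $\Omega^+(0)$ stay in $\Omega^+(t)$, so no boundary data are needed; integrating the resulting linear ODE along each particle path gives ${\rm div}\, H\equiv 0$, hence \eqref{19}.

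For the boundary involution I would take $F(t,x)=x_1-\varphi(t,x')$, so that $N=\nabla F$ agrees with the normal in \eqref{15} and, by the first condition there, ${\rm d}F/{\rm d}t=\lambda F$ for a smooth $\lambda$ (the relation \eqref{7}); then $H\cdot N=H\cdot\nabla F$. Differentiating $\nabla F$ along the flow gives ${\rm d}(\partial_jF)/{\rm d}t=\partial_j({\rm d}F/{\rm d}t)-(\partial_jv_i)\partial_iF$, which on $\Gamma(t)$ equals $\lambda\,\partial_jF-(\partial_jv_i)\partial_iF$. Inserting the magnetic-field equation for ${\rm d}H_j/{\rm d}t$, the terms $[(H\cdot\nabla)v_j]\partial_jF$ and $H_j(\partial_jv_i)\partial_iF$ cancel after relabelling indices, and there remains, on $\Gamma(t)$,
$$
\frac{{\rm d}}{{\rm d}t}\big(H\cdot N\big)=\big(\lambda-{\rm div}\, v\big)\,H\cdot N,
$$
again a homogeneous linear transport equation, this time along the flow induced on the interface; integrating it gives $H\cdot N\equiv 0$ on $\Gamma(t)$, hence \eqref{20}.

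I expect the obstacles to be bookkeeping rather than conceptual: the index gymnastics in the two cancellations, and the verification of the change-of-variables identities relating $({\rm div}\, h,H_N)$ to $({\rm div}\, H,H\cdot N)$, which is where most of the writing goes. The genuine analytic points are that the solution be regular enough for $\lambda$, the particle trajectories, and the induced flow on $\Gamma(t)$ to be well defined (Lipschitz $v$, $C^1$ flow map and $\varphi$, which the solution regularity provides), and that particle paths do not cross $\Gamma(t)$ --- which is exactly the kinematic condition $\partial_t\varphi=v_N$ in \eqref{15}. Full details are in \cite{trakhinin09arma}.
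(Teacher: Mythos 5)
Your proposal is correct in substance, but note that the paper itself does not prove Proposition \ref{p1}: it defers the proof to \cite{trakhinin09arma}, where (as one can see from the linearized analogue, Proposition \ref{p3.1} with equations \eqref{45}--\eqref{46}) the argument is carried out directly in the straightened variables on the fixed domain. There one derives a homogeneous transport equation for $a={\rm div}\,h/\partial_1\Phi_1$ of the form $\partial_t a+(\partial_1\Phi_1)^{-1}\{w\cdot\nabla a+a\,{\rm div}\,u\}=0$ and a companion transport equation on $\{x_1=0\}$ for $H_N$, and the key structural point is that the normal component $w_1$ of the transport field vanishes at $x_1=0$ (a consequence of $\partial_t\varphi=v_N$), so neither equation needs boundary data and both constraints propagate from $t=0$. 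You instead pull the solution back to the moving domain via Lemma \ref{lemma3}, derive $\frac{{\rm d}}{{\rm d}t}({\rm div}\,H)+({\rm div}\,v)\,{\rm div}\,H=0$ in $\Omega^+(t)$ and $\frac{{\rm d}}{{\rm d}t}(H\cdot N)=(\lambda-{\rm div}\,v)\,H\cdot N$ on $\Gamma(t)$ (the Hadamard factorization ${\rm d}F/{\rm d}t=\lambda F$ playing the role that $w_1|_{x_1=0}=0$ plays in the fixed-domain version), and then transfer back through the Piola-type identities ${\rm div}\,h=\partial_1\Phi_1\,({\rm div}\,H)\circ\Phi$ and $H_N|_{x_1=0}=(H\cdot N)\circ\Phi|_{\Gamma}$, which you state correctly. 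The two routes are equivalent; the fixed-domain computation avoids the factorization of ${\rm d}F/{\rm d}t$ and the explicit change-of-variables identities, and it produces exactly the transport equations reused later for the linearized problem, while your Eulerian/Lagrangian version is perhaps more transparent geometrically. One small point to make explicit: the system actually solved in \eqref{16} is the $q$-symmetrization, so the induction equation ${\rm d}H/{\rm d}t-(H\cdot\nabla)v+H\,{\rm div}\,v=0$ you differentiate is recovered on solutions by combining \eqref{equq} and \eqref{equH} (equivalently, by reverting to \eqref{3}); also, your argument needs enough regularity for $\lambda$ and the flow maps, which is consistent with the smooth-solution setting in which the proposition is used.
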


Note that Proposition \ref{p1} stays valid if in \eqref{16} we replace system $\mathbb{P}(U,\Psi)=0$
by system \eqref{1} in the straightened variables. This means that these systems are equivalent on solutions of our plasma-vacuum interface problem and we may justifiably replace
the conservation laws \eqref{1} by their nonconservative form \eqref{4}.

\section{The linearized problem}
\subsection{Basic state}
\label{s2.2}

Let us denote
\[
Q^\pm_T:= (-\infty,T]\times\Omega^\pm,\quad \omega_T:=(-\infty,T]\times\Gamma.
\]
Let
\begin{equation}
(\widehat{U}(t,x ),\widehat{\mathcal{H}}(t,x ),\hat{\varphi}(t,{x}'))
\label{21}
\end{equation}
be a given sufficiently smooth vector-function
with $\widehat{U}=(\hat{q},\hat{v},\widehat{H},\widehat{S})$, respectively defined on $Q^+_T,Q^-_T,\omega_T$, with
\begin{equation}
\begin{array}{ll}\label{}
\|\widehat{U}\|_{W^{2,\infty}(Q^+_T)}+
\|\partial_1\widehat{U}\|_{W^{2,\infty}(Q^+_T)}+
\|\widehat{\mathcal{H}}\|_{W^{2,\infty}(Q^-_T)}+
\|\hat{\varphi}\|_{W^{3,\infty}([0,T]\times\R^2)} \leq K,\\
\\
 \| \hat\varphi\|_{{\mathcal C}
([0,T];H^{2}(\R^2))} \le 1,
\label{22}
\end{array}
\end{equation}
where $K>0$ is a constant.
Corresponding to the given $\hat\varphi$ we construct $\hat\Psi$ and the diffeomorphism $\hat\Phi$  as in Lemmata \ref{lemma2} and \ref{lemma3} such that
\[
\partial_1\widehat{\Phi}_1\geq 1/2.
\]
We assume that the basic state \eqref{21} satisfies (for some positive $\rho_0,\rho_1\in\R$)
\begin{equation}
\rho (\hat{p},\widehat{S})\geq \rho_0 >0,\quad \rho_p(\hat{p},\widehat{S})\ge \rho_1 >0 \qquad \mbox{in}\ \overline{Q}^+_T,
\label{23}
\end{equation}
\begin{equation}
\partial_t\widehat{H}+\frac{1}{\partial_1\widehat{\Phi}_1}\left\{ (\hat{w} \cdot\nabla )
\widehat{H} - (\hat{h} \cdot\nabla ) \hat{v} + \widehat{H}{\rm div}\,\hat{u}\right\} =0\qquad \mbox{in}\ Q^+_T,
\label{26}
\end{equation}
\begin{equation}
\nabla\times \widehat{\mathfrak{H}}=0,\quad {\rm div}\,\hat{\mathfrak{h}}=0\qquad \mbox{in}\ Q^-_T,
\label{25}
\end{equation}
\begin{equation}
\partial_t\hat{\varphi}-\hat{v}_{N}=0,\quad \widehat{\mathcal{H}}_N=0 \qquad \mbox{on}\,\; \omega_T,\label{24}
\end{equation}
where all the ``hat'' values are determined like corresponding values for $(U,\mathcal{H},\varphi)$, i.e.
\[
\widehat{\mathfrak{H}}=(\widehat{\mathcal{H}}_1\partial_1\widehat{\Phi}_1,
\widehat{\mathcal{H}}_{\tau_2},\widehat{\mathcal{H}}_{\tau_3}),\quad
\hat{\mathfrak{h}}=(\hat{\mathcal{H}}_{N},\hat{\mathcal{H}}_2\partial_1\widehat{\Phi}_1,
\hat{\mathcal{H}}_3\partial_1\widehat{\Phi}_1),
\quad \hat{h}=(\hat{H}_{N},\hat{H}_2\partial_1\hat{\Phi}_1,\hat{H}_3\partial_1\hat{\Phi}_1),
\]
\[
\hat p=\hat q  -|\hat H  |^2/2 ,\quad
\hat{v}_{N}=\hat{v}_1- \hat{v}_2\partial_2\hat{\Psi}- \hat{v}_3\partial_3\hat{\Psi},\quad
\hat{\mathcal{H}}_{N}=\hat{\mathcal{H}}_1- \hat{\mathcal{H}}_2\partial_2\hat{\Psi}- \hat{\mathcal{H}}_3\partial_3\hat{\Psi},
\]
and where
\[
\hat{u}=(\hat{v}_{N},\hat{v}_2\partial_1\widehat{\Phi}_1,\hat{v}_3\partial_1\widehat{\Phi}_1),\quad
\hat{w}=\hat{u}-(\partial_t\widehat{\Psi},0,0).
\]
Note that (\ref{22}) yields
\[
 \|\nabla_{t,x}\widehat{\Psi}\|_{W^{2,\infty}([0,T]\times\R^3)}\leq C(K),
\]
where $\nabla_{t,x}=(\partial_t, \nabla )$ and $C=C(K)>0$ is a constant depending on $K$.
\\
It follows from (\ref{26}) that the constraints
\begin{equation}
{\rm div}\,\hat{h}=0\quad \mbox{in}\; Q^+_T,\quad \widehat{H}_{N}=0\quad \mbox{on}\,\; \omega_T,
\label{27}
\end{equation}
are satisfied for the basic state (\ref{21}) if they hold at $t=0$ (see \cite{trakhinin09arma} for the proof).
Thus, for the basic state we also require the fulfillment of conditions
\eqref{27} at $t=0$.


\subsection{Linearized problem}
\label{s2.3}

The linearized equations for (\ref{16}), (\ref{17}) read:
\[
\mathbb{P}'(\widehat{U},\widehat{\Psi})(\delta U,\delta\Psi):=
\frac{\rm d}{{\rm d}\varepsilon}\mathbb{P}(U_{\varepsilon},\Psi_{\varepsilon})|_{\varepsilon =0}=f
\qquad \mbox{in}\ Q^+_T,
\]
\[
\mathbb{V}'(\widehat{\mathcal{H}},\widehat{\Psi})(\delta \mathcal{H},\delta\Psi):=
\frac{\rm d}{{\rm d}\varepsilon}\mathbb{V}(\mathcal{H}_{\varepsilon},\Psi_{\varepsilon})|_{\varepsilon =0}=\mathcal{G}'
\qquad \mbox{in}\ Q^-_T,
\]
\[
\mathbb{B}'(\widehat{U},\widehat{\mathcal{H}},\hat{\varphi})(\delta U,\delta \mathcal{H},\delta \varphi ):=
\frac{\rm d}{{\rm d}\varepsilon}\mathbb{B}(U_{\varepsilon},\mathcal{H}_{\varepsilon},\varphi_{\varepsilon})|_{\varepsilon =0}={g}
\qquad \mbox{on}\ \omega_T,
\]
where $U_{\varepsilon}=\widehat{U}+ \varepsilon\,\delta U$, $\mathcal{H}_{\varepsilon}=
\widehat{\mathcal{H}}+\varepsilon\,\delta \mathcal{H}$,
$\varphi_{\varepsilon}=\hat{\varphi}+ \varepsilon\,\delta \varphi$;
$\delta\Psi$ is constructed from $\delta \varphi$ as in Lemma \ref{lemma2} and
$\Psi_{\varepsilon}=\hat\Psi +  \varepsilon\,\delta\Psi$.

Here we introduce the source terms $f=(f_1,\ldots ,f_8)$, $\mathcal{G}'=(\chi,
\Xi)$, $\chi=(\chi_1, \chi_2,
\chi_3)$, and $g=(g_1,g_2,g_3)$ to make the interior equations and the boundary conditions inhomogeneous.

We compute the exact form of the linearized equations (below we drop $\delta$):
\[
\mathbb{P}'(\widehat{U},\widehat{\Psi})(U,\Psi)
=
P(\widehat{U},\widehat{\Psi})U +{\mathcal C}(\widehat{U},\widehat{\Psi})
U -   \bigl\{L(\widehat{U},\widehat{\Psi})\Psi\bigr\}\frac{\partial_1\widehat{U}}{\partial_1\widehat{\Phi}_1}
=f,
\]
\[
\mathbb{V}'(\widehat{\mathcal{H}},\widehat{\Psi})(\mathcal{H},\Psi)=
\mathbb{V}(\mathcal{H},\widehat{\Psi})+
\left(\begin{array}{c}
\nabla\widehat{\mathcal{H}}_1\times\nabla\Psi\\[3pt]
\nabla \times \left(\begin{array}{c} 0 \\ -\widehat{\mathcal{H}}_3 \\
\widehat{\mathcal{H}}_2 \end{array} \right) \cdot \nabla\Psi
\end{array}
\right)=\mathcal{G}',
\]
\[
\mathbb{B}'(\widehat{U},\widehat{\mathcal{H}},\hat{\varphi})(U,\mathcal{H},\varphi )=
\left(
\begin{array}{c}
\partial_t\varphi +\hat{v}_2\partial_2\varphi+\hat{v}_3\partial_3\varphi -v_{N}\\[3pt]
q-\widehat{\mathcal{H}} \cdot \mathcal{H}\\[3pt]
\mathcal{H}_N-\widehat{\mathcal{H}}_2\partial_2\varphi -\widehat{\mathcal{H}}_3\partial_3\varphi
\end{array}
\right)_{|x_1=0}=g,
\]
where $q:=p+ \widehat{H}\cdot H$, $v_{N}:= v_1-v_2\partial_2\widehat{\Psi}-v_3\partial_3\widehat{\Psi}$, and the matrix
${\mathcal C}(\widehat{U},\widehat{\Psi})$ is determined as follows:
\[
\begin{array}{r}
{\mathcal C}(\widehat{U},\widehat{\Psi})Y
= (Y ,\nabla_yA_0(\widehat{U} ))\partial_t\widehat{U}
 +(Y ,\nabla_y\widetilde{A}_1(\widehat{U},\widehat{\Psi}))\partial_1\widehat{U}
 \\[6pt]
+ (Y ,\nabla_yA_2(\widehat{U} ))\partial_2\widehat{U}
+ (Y ,\nabla_yA_3(\widehat{U} ))\partial_3\widehat{U},
\end{array}
\]
\[
(Y ,\nabla_y A(\widehat{U})):=\sum_{i=1}^8y_i\left.\left(\frac{\partial A (Y )}{
\partial y_i}\right|_{Y =\widehat{U}}\right),\quad Y =(y_1,\ldots ,y_8).
\]
Since the differential operators $\mathbb{P}'(\widehat{U},\widehat{\Psi})$ and $\mathbb{V}'(\widehat{\mathcal{H}},\widehat{\Psi})$ are first-order operators in $\Psi$,
as in  \cite{alinhac} the linearized problem is rewritten in terms of the ``good unknown''
\begin{equation}
\dot{U}:=U -\frac{\Psi}{\partial_1\widehat{\Phi}_1}\,\partial_1\widehat{U},\quad
\dot{\mathcal{H}}:=\mathcal{H} -\frac{\Psi}{\partial_1\widehat{\Phi}_1}\,\partial_1\widehat{\mathcal{H}}.
\label{29}
\end{equation}
Taking into account assumptions \eqref{24} and \eqref{25} and omitting detailed calculations,
we rewrite our linearized equations in terms of the new unknowns \eqref{29}:
\begin{equation}
P(\widehat{U},\widehat{\Psi})\dot{U} +{\mathcal C}(\widehat{U},\widehat{\Psi})
\dot{U} - \frac{\Psi}{\partial_1\widehat{\Phi}_1}\,\partial_1\bigl\{\mathbb{L}
(\widehat{U},\widehat{\Psi})\bigr\}=f,
\label{30}
\end{equation}
\begin{equation}
\mathbb{V}(\dot{\mathcal{H}},\widehat{\Psi})=\mathcal{G}'.
\label{31}
\end{equation}
\begin{multline}
\mathbb{B}'(\widehat{U},\widehat{\mathcal{H}},\hat{\varphi})(\dot{U},\dot{\mathcal{H}},\varphi ):= \mathbb{B}'(\widehat{U},\widehat{\mathcal{H}},\hat{\varphi})(U,\mathcal{H},\varphi )\\[6pt] =
 \left(
\begin{array}{c}
\partial_t\varphi+\hat{v}_2\partial_2\varphi+\hat{v}_3\partial_3\varphi-\dot{v}_{N}-
\varphi\,\partial_1\hat{v}_{N}\\[3pt]
\dot{q}-\widehat{\mathcal{H}} \cdot \dot{\mathcal{H}}+ [\partial_1\hat{q}]\varphi \\[3pt]
\dot{\mathcal{H}}_{N}-\partial_2\bigl(\widehat{\mathcal{H}}_2\varphi \bigr) -\partial_3\bigl(\widehat{\mathcal{H}}_3\varphi \bigr)
\end{array}\right)_{|x_1=0}=g,
\label{32}
\end{multline}
where $\dot{v}_{\rm N}=\dot{v}_1-\dot{v}_2\partial_2\hat{\Psi}-\dot{v}_3\partial_3\hat{\Psi}$,
$\dot{\mathcal{H}}_{N}=\dot{\mathcal{H}}_1-\dot{\mathcal{H}}_2\partial_2\hat{\Psi}-\dot{\mathcal{H}}_3\partial_3\hat{\Psi}$, and
\[
[\partial_1\hat{q}]=(\partial_1\hat{q})|_{x_1=0}-(\widehat{\mathcal{H}} \cdot \partial_1\widehat{\mathcal{H}})|_{x_1=0}.
\]
We used the last equation in \eqref{25} taken at $x_1=0$ while writing down the last boundary condition in \eqref{32}.

As in \cite{alinhac, CS, trakhinin09arma}, we drop the zeroth-order term in $\Psi$ in (\ref{30}) and consider the effective linear operators
\[
\mathbb{P}'_e(\widehat{U},\widehat{\Psi})\dot{U} :=P(\widehat{U},\widehat{\Psi})\dot{U} +{\mathcal C}(\widehat{U},\widehat{\Psi})
\dot{U}=f.
\]
In the future nonlinear analysis the dropped term in (\ref{30}) should be considered as an error term.
The new form of our linearized problem for $(\dot{U},\dot{\mathcal{H}},\varphi )$ reads:
\begin{subequations}\label{34}
\begin{align}
\widehat{A}_0\partial_t\dot{U}+\sum_{j=1}^{3}\widehat{A}_j\partial_j\dot{U}+
\widehat{\mathcal C}\dot{U}=f \qquad &\mbox{in}\ Q^+_T,\label{34a}
\\
\nabla\times \dot{\mathfrak{H}}=\chi,\quad {\rm div}\,\dot{\mathfrak{h}}=\Xi \qquad &\mbox{in}\ Q^-_T, \label{36}
\\
\partial_t\varphi=\dot{v}_{N}-\hat{v}_2\partial_2\varphi-\hat{v}_3\partial_3\varphi +
\varphi\,\partial_1\hat{v}_{N}+g_1,  \qquad &\label{34b}
\\
\dot{q}=\widehat{\mathcal{H}}\cdot\dot{\mathcal{H}}-  [ \partial_1\hat{q}] \varphi +g_2, \qquad & \label{35}
\\
\dot{\mathcal{H}}_{N} =\partial_2\bigl(\widehat{\mathcal{H}}_2\varphi \bigr) +\partial_3\bigl(\widehat{\mathcal{H}}_3\varphi \bigr)+g_3\qquad &\mbox{on}\ \omega_T,
\label{37}
\\
(\dot{U},\dot{\mathcal{H}},\varphi )=0\qquad &\mbox{for}\ t<0,\label{38a}
\end{align}
\end{subequations}

where
\[
\widehat{A}_{\alpha}=:{A}_{\alpha}(\widehat{U}),\quad \alpha =0,2,3,\quad
\widehat{A}_1=:\widetilde{A}_1(\widehat{U},\widehat{\Psi}),\quad
\widehat{\mathcal C}:={\mathcal C}(\widehat{U},\widehat{\Psi}),
\]
\[
\dot{\mathfrak{H}}=(\dot{\mathcal{H}}_1\partial_1\widehat{\Phi}_1,\dot{\mathcal{H}}_{\tau_2},\dot{\mathcal{H}}_{\tau_3}),\quad
\dot{\mathfrak{h}}=(\dot{\mathcal{H}}_{N},\dot{\mathcal{H}}_2\partial_1\widehat{\Phi}_1,\dot{\mathcal{H}}_3\partial_1\widehat{\Phi}_1),
\]
\[
\dot{\mathcal{H}}_{N}=\dot{\mathcal{H}}_1-\dot{\mathcal{H}}_2\partial_2\widehat{\Psi}-\dot{\mathcal{H}}_3\partial_3\widehat{\Psi},\quad
\dot{\mathcal{H}}_{\tau_i}=\dot{\mathcal{H}}_1\partial_i\widehat{\Psi}+\dot{\mathcal{H}}_i,\quad i=2,3.
\]
The source term $ \chi$ of the first equation in \eqref{36} should satisfy the constraint ${\rm div}\, \chi=0$. For the resolution of the elliptic problem \eqref{36}, \eqref{37} the data $\Xi,g_3$ must satisfy the necessary compatibility condition
\begin{equation}
\begin{array}{ll}\label{compatibility}
\ds \int_{\Omega^-}\Xi\, dx=\int_\Gamma \, g_3\, dx',
\end{array}
\end{equation}
which follows from the double integration by parts
\begin{equation*}
\begin{array}{ll}\label{}
\ds \int_{\Omega^-}\Xi\, dx= \int_{\Omega^-}{\rm div}\,\dot{\mathfrak{h}}\, dx
=\int_\Gamma \, \dot{\mathfrak{h}}_1\, dx'
=\int_\Gamma \, \{ \partial_2\bigl(\widehat{\mathcal{H}}_2\varphi \bigr) +\partial_3\bigl(\widehat{\mathcal{H}}_3\varphi \bigr)+g_3 \}\, dx'
=\int_\Gamma \, g_3\, dx'.
\end{array}
\end{equation*}
We assume that the source terms $f, \chi,\Xi $ and the boundary datum $g$ vanish in the past and consider the case of zero initial data. We postpone the case of nonzero initial data to the nonlinear analysis (see e.g. \cite{CS,trakhinin09arma}).

\subsection{Reduction to homogeneous constraints in the \lq\lq vacuum part\rq\rq}

We decompose $\dot{\mathcal{H}}$ in \eqref{34} as $\dot{\mathcal{H}}=\mathcal{H}'+\mathcal{H}''$ (and accordingly $\dot{\mathfrak{H}}=\mathfrak{H}'+\mathfrak{H}''$, $\dot{\mathfrak{h}}=\mathfrak{h}'+\mathfrak{h}''$), where $\mathcal{H}''$ is required to solve for each $t$ the elliptic problem
\begin{equation}
\begin{array}{ll}\label{elliptic}
\nabla\times {\mathfrak{H}}''=\chi,\quad {\rm div}\,{\mathfrak{h}}''=\Xi \qquad &\mbox{in}\ \Omega^-, 
\\
{\mathfrak{h}}''_1={\mathcal{H}}_{N}'' =g_3\qquad &\mbox{on}\ \Gamma.
\end{array}
\end{equation}
The source term $ \chi$ of the first equation should satisfy the constraint ${\rm div}\, \chi=0$. For the resolution of  \eqref{elliptic} the data $\Xi,g_3$ must satisfy the necessary compatibility condition \eqref{compatibility}.
By classical results of the elliptic theory we have the following result.
\begin{lemma}\label{elliptic2}
Assume that the data $(\chi,\Xi, g_3)$ in \eqref{elliptic}, vanishing in appropriate way as $x$ goes to infinity, satisfy the constraint ${\rm div}\, \chi=0$ and the compatibility condition \eqref{compatibility}. Then there exists a unique solution $\mathcal{H}''$ of \eqref{elliptic} vanishing at infinity.
\end{lemma}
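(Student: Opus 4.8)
The plan is to reformulate the div-curl system \eqref{elliptic} as a standard elliptic boundary value problem in the half-space $\Omega^-$ by introducing a scalar (or vector) potential. First I would reduce to the homogeneous case by subtracting off a particular lifting: choose a smooth field $\mathfrak{H}^\flat$ with $\nabla\times\mathfrak{H}^\flat=\chi$ (possible since ${\rm div}\,\chi=0$, e.g. via the Biot--Savart-type formula in $\R^3$ or by solving $-\Delta A=\nabla\times\chi$, $\mathfrak{H}^\flat=\nabla\times A$), and similarly adjust so that the remaining unknown $\mathfrak{H}'''=\mathfrak{H}''-\mathfrak{H}^\flat$ satisfies a curl-free, divergence-prescribed system with a homogeneous Neumann-type boundary condition on $\Gamma$. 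Since the map $\mathcal{H}\mapsto\mathfrak{H},\mathfrak{h}$ is, for fixed $t$, an invertible (bounded with bounded inverse, because $\partial_1\widehat\Phi_1\ge 1/2$) linear change of the components, it suffices to solve for $\mathfrak{H},\mathfrak{h}$; I will then recover $\mathcal{H}''$.

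Next, because $\nabla\times\mathfrak{H}'''=0$ on the simply connected domain $\Omega^-$, I would write $\mathfrak{H}'''=\nabla\phi$ for a scalar potential $\phi$. Reconstructing the relation between $\mathfrak{h}'''$ and $\mathfrak{H}'''$ from the definitions (both are linear in $\mathcal{H}'''$ with coefficients depending on $\partial_i\widehat\Psi$ and $\partial_1\widehat\Phi_1$), the equation ${\rm div}\,\mathfrak{h}'''=\Xi'''$ becomes a second-order elliptic equation $\nabla\cdot(\widehat E\,\nabla\phi)=\Xi'''$ in $\Omega^-$, where $\widehat E=\widehat E(t,x)$ is a symmetric matrix, uniformly positive definite thanks to \eqref{22} and $\partial_1\widehat\Phi_1\ge 1/2$ (taking $\eps$ small in Lemma \ref{lemma2} so that $\widehat\Psi$-gradients are small makes $\widehat E$ a small perturbation of the identity, hence coercive). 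The boundary condition $\mathfrak{h}'''_1=0$ on $\Gamma$ translates into a conormal (oblique/Neumann) condition $(\widehat E\,\nabla\phi)\cdot e_1=0$, i.e. $\partial_{\nu_{\widehat E}}\phi=0$. Thus \eqref{elliptic} is equivalent to a linear Neumann problem for a uniformly elliptic divergence-form operator in the half-space, with right-hand side $\Xi'''$ of mean zero over $\Omega^-$ (the mean-zero property being exactly the compatibility condition \eqref{compatibility} after integrating by parts, as already displayed in the excerpt).

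Existence and uniqueness then follow from classical elliptic theory: Lax--Milgram applied to the bilinear form $a(\phi,\psi)=\int_{\Omega^-}\widehat E\,\nabla\phi\cdot\nabla\psi\,dx$ on the homogeneous space $\dot H^1(\Omega^-)$ (functions with $L^2$ gradient, modulo constants, vanishing suitably at infinity), using coercivity from uniform ellipticity plus the Poincaré/Hardy inequality in the half-space that controls a decaying function by its gradient. The compatibility condition guarantees that the linear functional $\psi\mapsto\int\Xi'''\psi$ annihilates constants, so it descends to the quotient space; Lax--Milgram yields a unique $\nabla\phi\in L^2$, and then $\mathcal{H}''=\nabla\phi$-part plus the explicit liftings gives the unique solution vanishing at infinity. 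Uniqueness: any two solutions differ by a field with zero curl, zero divergence of the associated $\mathfrak{h}$, and zero normal trace, hence by the energy identity $\int\widehat E\,\nabla\phi\cdot\nabla\phi=0$, forcing $\nabla\phi\equiv 0$ and, with the decay at infinity, $\phi\equiv 0$.

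I expect the main obstacle to be purely bookkeeping rather than conceptual: carefully identifying the matrix $\widehat E$ from the somewhat involved definitions of $\mathfrak{H},\mathfrak{h}$ in terms of $\mathcal{H},\widehat\Psi$, verifying that it is indeed symmetric and uniformly positive definite on $\overline{Q}^-_T$ under \eqref{22}, and choosing the correct weighted function space at infinity so that both the Poincaré-type coercivity and the interpretation of "vanishing at infinity" are consistent with the decay hypotheses on $(\chi,\Xi,g_3)$. Once the problem is in divergence form with a coercive form, the existence/uniqueness is entirely standard, so I would state it by citation to classical elliptic theory (e.g. Lax--Milgram together with the half-space Hardy inequality) and only sketch the reduction.
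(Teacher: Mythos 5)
The paper offers no proof of this lemma: it simply asserts that the statement follows from ``classical results of the elliptic theory'' and in the subsequent Remark explicitly defers the choice of function spaces (and hence the actual proof) to future work. Your argument is therefore not an alternative to the paper's proof but a reconstruction of what that appeal to classical theory amounts to, and it is essentially the right one. The key structural observation you use --- that $\mathfrak{h}''=B_0\,\mathfrak{H}''$ with $B_0=(\partial_1\widehat\Phi_1)^{-1}\hat\eta\,\hat\eta^{T}$ a symmetric, uniformly positive definite matrix --- is exactly the relation that the authors themselves exploit later (Section \ref{equiva2}, formula for $B_0$), so your coefficient matrix $\widehat E$ is their $B_0$, and the reduction to a divergence-form Neumann problem for a scalar potential on the simply connected half-space $\Omega^-$, solved by Lax--Milgram on $\dot H^1(\Omega^-)$ modulo constants, with the compatibility condition \eqref{compatibility} making the linear functional descend to the quotient, is the standard route.

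One small slip: the vector-potential recipe ``$-\Delta A=\nabla\times\chi$, $\mathfrak{H}^\flat=\nabla\times A$'' does not give $\nabla\times\mathfrak{H}^\flat=\chi$; one gets $\nabla\times\mathfrak{H}^\flat=\nabla\times\chi+\nabla(\div A)$. The correct form is $-\Delta A=\chi$ (whence $\div A$ is harmonic and vanishes under the decay assumptions, since $\div\chi=0$), so that $\mathfrak{H}^\flat=\nabla\times A$ satisfies $\nabla\times\mathfrak{H}^\flat=-\Delta A+\nabla(\div A)=\chi$; this is just the Biot--Savart construction you also mention, which is correct. You should also be explicit that, after subtracting the lift of $\chi$, the trace of $\mathfrak{h}^\flat_1$ on $\Gamma$ modifies the boundary datum, and that the modified datum still satisfies the compatibility condition by the same integration by parts that produced \eqref{compatibility}; alternatively, carry the inhomogeneous $g_3$ directly as a boundary integral in the Lax--Milgram functional, which is a little cleaner. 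With these points tidied up, your sketch is a complete and correct proof of the lemma at the level of generality at which the paper states it.
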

\begin{remark}\label{}
In the statement of the lemma above we intentionally leave unspecified the description of the regularity and the behavior at infinity of the data and consequently of the solution. This point will be faced in the forthcoming paper on the resolution of the nonlinear problem.
\end{remark}
Given $\mathcal{H}''$, now we look for $\mathcal{H}'$ such that
\begin{equation}
\begin{array}{ll}\label{postelliptic}
\nabla\times {\mathfrak{H}}'=0,\quad {\rm div}\,{\mathfrak{h}}'=0 \qquad &\mbox{in}\ Q^-_T, 
\\
{q}=\widehat{\mathcal{H}}\cdot{\mathcal{H}}'-  [ \partial_1\hat{q}] \varphi +g_2', \qquad & 
\\
{\mathcal{H}}_{N}' =\partial_2\bigl(\widehat{\mathcal{H}}_2\varphi \bigr) +\partial_3\bigl(\widehat{\mathcal{H}}_3\varphi \bigr)\qquad &\mbox{on}\ \omega_T,
\end{array}
\end{equation}
where we have denoted $g_2'=g_2+\widehat{\mathcal{H}}\cdot{\mathcal{H}}''$. If $\mathcal{H}''$ solves \eqref{elliptic} and $\mathcal{H}'$ is a solution of \eqref{postelliptic} then $\dot{\mathcal{H}}=\mathcal{H}'+\mathcal{H}''$ clearly solves \eqref{36}, \eqref{35}, \eqref{37}.

From \eqref{34}, \eqref{postelliptic}, the new form of the reduced linearized problem with unknowns ($U,\mathcal{H}'$) reads (we drop for convenience the $'$ in $\mathcal{H}',g_2'$)
\begin{subequations}\label{34''}
\begin{align}
\widehat{A}_0\partial_t\dot{U}+\sum_{j=1}^{3}\widehat{A}_j\partial_j\dot{U}+
\widehat{\mathcal C}\dot{U}=f \qquad &\mbox{in}\ Q^+_T,\label{34''a}
\\
\nabla\times {\mathfrak{H}}=0,\quad {\rm div}\,{\mathfrak{h}}=0 \qquad &\mbox{in}\ Q^-_T, \label{36''b}
\\
\partial_t\varphi=\dot{v}_{N}-\hat{v}_2\partial_2\varphi-\hat{v}_3\partial_3\varphi +
\varphi\,\partial_1\hat{v}_{N}+g_1,  \qquad &\label{37c}
\\
\dot{q}=\widehat{\mathcal{H}}\cdot{\mathcal{H}}-  [ \partial_1\hat{q}] \varphi +g_2, \qquad & \label{37d}
\\
{\mathcal{H}}_{N} =\partial_2\bigl(\widehat{\mathcal{H}}_2\varphi \bigr) +\partial_3\bigl(\widehat{\mathcal{H}}_3\varphi \bigr)\qquad &\mbox{on}\ \omega_T,
\label{37''}
\\
(\dot{U},{\mathcal{H}},\varphi )=0\qquad & \mbox{for}\ t<0.\label{38''f}
\end{align}
\end{subequations}
%

\subsection{Reduction to homogeneous constraints in the \lq\lq plasma part\rq\rq}
From problem \eqref{34''} we can deduce nonhomogeneous equations associated with the divergence constraint ${\rm div}\,\dot h=0$ and the ``redundant'' boundary conditions $\dot H_N|_{x_1=0}=0$ for the nonlinear problem. More precisely, with reference to \cite[Proposition 2]{trakhinin09arma} for the proof, we have the following.

\begin{proposition}[\cite{trakhinin09arma}]
Let the basic state \eqref{21} satisfies assumptions \eqref{22}--\eqref{27}.
Then solutions of problem \eqref{34''} satisfy
\begin{equation}
{\rm div}\,\dot{h}=r\quad\mbox{in}\ Q^+_T,
\label{43}
\end{equation}
\begin{equation}
\widehat{H}_2\partial_2\varphi +\widehat{H}_3\partial_3\varphi -\dot{H}_{N}-
\varphi\,\partial_1\widehat{H}_{N}=R\quad\mbox{on}\ \omega_T.
\label{44}
\end{equation}
Here
\[
\dot{h}=(\dot{H}_{N},\dot{H}_2\partial_1\widehat{\Phi}_1,\dot{H}_3\partial_1\widehat{\Phi}_1),\quad
\dot{H}_{N}=\dot{H}_1-\dot{H}_2\partial_2\widehat{\Psi}-\dot{H}_3
\partial_3\widehat{\Psi}.
\]
The functions $r=
r(t,x )$ and $R= R(t,x')$, which vanish in the past, are determined by the source terms and the basic state as solutions to the linear inhomogeneous equations
\begin{equation}
\partial_t a+ \frac{1}{\partial_1\widehat{\Phi}_1}\left\{ \hat{w} \cdot\nabla a + a\,{\rm div}\,\hat{u}\right\}={\mathcal F}_H\quad\mbox{in}\ Q^+_T,
\label{45}
\end{equation}
\begin{equation}
\partial_t R +\hat{v}_2\partial_2R+\hat{v}_3\partial_3R+
\left(\partial_2\hat{v}_2+\partial_3\hat{v}_3\right) R={\mathcal Q}\quad \mbox{on}\ \omega_T,
\label{46}
\end{equation}
where $a=r/\partial_1\widehat{\Phi}_1,\quad {\mathcal F}_H=({\rm div}\,
{f}_{H})/\partial_1\widehat{\Phi}_1$,
\[
{f}_{H}=
(f_{N} ,f_6,f_7),\quad
f_{N}=f_5-f_6\partial_2\widehat{\Psi}-
f_7\partial_3\widehat{\Psi},
\quad
{\mathcal Q}=\bigl\{\partial_2\bigl(\widehat{H}_2g_1\bigr)+
\partial_3\bigl(\widehat{H}_3g_1\bigr)-f_{N}\bigr\}\bigr|_{x_1=0}.
\]
\label{p3.1}
\end{proposition}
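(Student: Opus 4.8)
The plan is to derive the two claimed identities \eqref{43}--\eqref{44} by differentiating the evolution equation \eqref{34''a} for $\dot U$, exactly as in the original MHD case. First I would isolate from the system \eqref{34''a} the subsystem corresponding to the magnetic-field components $\dot H$, taking into account the specific structure of the matrices $\widehat A_\alpha$ and $\widehat{\mathcal C}$ coming from the symmetrization \eqref{3'}. Contracting this subsystem with the conormal ``$N$-direction'' in the way that produces $\dot h=(\dot H_N,\dot H_2\partial_1\widehat\Phi_1,\dot H_3\partial_1\widehat\Phi_1)$, and then applying the straightened divergence operator, one obtains after cancellations a transport equation for $\mathrm{div}\,\dot h$ driven only by the source term $f$ and the basic state. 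Introducing $a=r/\partial_1\widehat\Phi_1$ and using the basic-state identity \eqref{26} to handle the commutators between $\mathrm{div}$ and the coefficients, this transport equation takes precisely the form \eqref{45} with right-hand side ${\mathcal F}_H=(\mathrm{div}\,f_H)/\partial_1\widehat\Phi_1$, where $f_H=(f_N,f_6,f_7)$.

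Next I would treat the boundary. Taking the trace at $x_1=0$ of the $\dot H$-subsystem, combining it with the linearized eikonal-type equation \eqref{37c} for $\varphi$ (to replace $\partial_t\varphi$), and using the basic-state boundary conditions \eqref{24}, \eqref{27}, one produces an evolution equation along $\omega_T$ for the quantity $R:=\widehat H_2\partial_2\varphi+\widehat H_3\partial_3\varphi-\dot H_N-\varphi\,\partial_1\widehat H_N$. The source term ${\mathcal Q}$ assembles the boundary contributions: the $g_1$ enters through the substitution for $\partial_t\varphi$, producing the terms $\partial_2(\widehat H_2 g_1)+\partial_3(\widehat H_3 g_1)$, while $f_N|_{x_1=0}$ is the trace of the normal component of the magnetic-field source. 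The coefficients $\hat v_2,\hat v_3$ and $\partial_2\hat v_2+\partial_3\hat v_3$ in \eqref{46} come from the tangential transport part and the divergence of the tangential velocity, respectively. Since all the data vanish in the past, ${\mathcal F}_H$ and ${\mathcal Q}$ vanish in the past, hence by the standard method of characteristics for the linear transport equations \eqref{45} and \eqref{46} the solutions $a$ (equivalently $r$) and $R$ exist, are unique, and vanish in the past; feeding them back shows \eqref{43}--\eqref{44} hold throughout $Q^+_T$ and $\omega_T$.

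The one genuinely delicate point is the bookkeeping of the interior calculation: one must verify that when $\mathrm{div}$ is applied to the $\dot h$-components of \eqref{34''a}, the contributions involving $\partial_1\widehat\Phi_1$, the coefficients $\widehat A_j$, and the zeroth-order matrix $\widehat{\mathcal C}$ collapse to exactly the first-order transport operator in \eqref{45} with no spurious terms — this is where the precise form of the symmetrization \eqref{3'} and the basic-state constraint \eqref{26} are both essential, and it parallels the nonlinear computation in \cite[Proposition 2]{trakhinin09arma}. Likewise, at the boundary, the cancellation that turns $\dot H_N|_{x_1=0}$ and its derivatives into the single transported quantity $R$ relies on using \eqref{24} and the last equation of \eqref{27} at $x_1=0$. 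Once those cancellations are confirmed, the result follows. In the interest of brevity, and since the computation is entirely analogous to the pure-plasma (current-vortex sheet) case, I would simply refer the reader to \cite[Proposition 2]{trakhinin09arma} for the detailed verification, as the statement above already does.
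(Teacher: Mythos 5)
Your proposal is correct and follows essentially the same route as the paper, which itself gives no independent proof but defers to \cite[Proposition 2]{trakhinin09arma}: one extracts the linearized induction equation for $\dot H$ from \eqref{34''a}, applies the straightened divergence and uses \eqref{26} to obtain the transport equation \eqref{45} for $a=r/\partial_1\widehat\Phi_1$, and takes the trace of its normal component together with \eqref{37c}, \eqref{24}, \eqref{27} to obtain \eqref{46} for $R$, concluding by the vanishing of the data in the past. The bookkeeping you flag (recombining the symmetrized rows to recover the induction form and checking the cancellations) is precisely what the cited reference carries out, so deferring to it is consistent with the paper's own treatment.
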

Let us reduce \eqref{34''} to a problem with homogeneous boundary conditions \eqref{37c}, \eqref{37d} (i.e. $g_1=g_2=0$) and homogeneous constraints \eqref{43} and \eqref{44} (i.e. $r=R=0$). More precisely, we describe a ``lifting'' function as follows:
\begin{equation*}
\widetilde{U} = (\tilde{q},\tilde{v}_1,0,0,\widetilde{H},0),\qquad
\label{tilde}
\end{equation*}
where
$\tilde{q}= {g}_2,\tilde{v}_1=-g_1$ on $\omega_T$, and where
$\widetilde{H}$ solves the equation for $\dot{H}$ contained in \eqref{34''a} with $\dot{v}=0$:
\begin{equation}
\partial_t\widetilde{H}+ \frac{1}{\partial_1\widehat{\Phi}_1}\left\{(\hat{w} \cdot\nabla )
\widetilde{H} - (\tilde{h} \cdot\nabla ) \hat{v} + \widetilde{H}{\rm div}\,\hat{w}\right\}
 ={f}_{H} \qquad \mbox{in}\ Q^+_T,
 \label{42}
\end{equation}
where $\tilde{h}=(\widetilde{H}_1-\widetilde{H}_2\partial_2\hat{\Psi}
-\widetilde{H}_3\partial_3\hat{\Psi},\widetilde{H}_2,\widetilde{H}_3)$, $f_H=(f_5,f_6,f_ 7)$. It is very important that, in view of \eqref{24}, we have $\hat{w}_1|_{x_1=0}=0$; therefore the linear equation \eqref{42} does not need any boundary condition. 
Then the new unknown
\begin{equation}
U^{\natural}=\dot{U}-\widetilde{U},\quad \mathcal{H}^{\natural}={\mathcal{H}}\label{87'}
\end{equation}
satisfies problem \eqref{34''} with $f=F$, where
\[
F =(F_1,\ldots, F_8)=f-\mathbb{P}'_e(\widehat{U},\widehat{\Psi})\widetilde{U}.
\]
In view of \eqref{42}, $F_H=(F_5,F_6,F_7)=0$, and it follows from Proposition \ref{p3.1} that $U^{\natural}$ satisfies \eqref{43} and \eqref{44} with $r=R=0$. 

Dropping for convenience the indices $^{\natural}$ in \eqref{87'}, the new form of our reduced linearized problem now reads
\begin{subequations}\label{34'}
\begin{align}
\widehat{A}_0\partial_t{U}+\sum_{j=1}^{3}\widehat{A}_j\partial_j{U}+
\widehat{\mathcal C}{U}=F \qquad &\mbox{in}\ Q^+_T,\label{34'a}
\\
\nabla\times {\mathfrak{H}}=0,\quad {\rm div}\,{\mathfrak{h}}=0 \qquad &\mbox{in}\ Q^-_T, \label{36'b}
\\
\partial_t\varphi={v}_{N}-\hat{v}_2\partial_2\varphi-\hat{v}_3\partial_3\varphi +
\varphi\,\partial_1\hat{v}_{N},  \qquad &
\\
{q}=\widehat{\mathcal{H}}\cdot{\mathcal{H}}-  [ \partial_1\hat{q}] \varphi , \qquad & \label{35'd}
\\
{\mathcal{H}}_{N} =\partial_2\bigl(\widehat{\mathcal{H}}_2\varphi \bigr) +\partial_3\bigl(\widehat{\mathcal{H}}_3\varphi \bigr)\qquad &\mbox{on}\ \omega_T,
\label{37'}
\\
({U},{\mathcal{H}},\varphi )=0\qquad & \mbox{for}\ t<0.\label{38'f}
\end{align}
\end{subequations}
and solutions should satisfy
\begin{equation}
{\rm div}\,{h}=0\qquad\mbox{in}\ Q^+_T,
\label{93}
\end{equation}
\begin{equation}
{H}_{N}=\widehat{H}_2\partial_2\varphi +\widehat{H}_3\partial_3\varphi -
\varphi\,\partial_1\widehat{H}_{N}\quad\mbox{on}\ \omega_T.
\label{95}
\end{equation}
All the notations here for $U$ and $\mathcal{H}$ (e.g., $h$, $\mathfrak{H}$, $\mathfrak{h}$, etc.) are analogous to the corresponding ones for $\dot{U}$ and $\dot{\mathcal{H}}$ introduced above.

\subsection{An equivalent formulation of \eqref{34'} }\label{equiva}
In the following analysis it is convenient to make use of different \lq\lq plasma\rq\rq variables and an equivalent form of equations \eqref{34'a}.
We define the matrix
\begin{equation*}
\begin{array}{ll}\label{}
\hat \eta=\begin{pmatrix}
 1&-\ddue\widehat\Psi &-\dtre\widehat\Psi \\
0 &\duno\widehat\Phi_1&0\\
0&0&\duno\widehat\Phi_1
\end{pmatrix}.
\end{array}
\end{equation*}
It follows that
\begin{equation}
\begin{array}{ll}\label{defcalU}
{u}=({v}_{N},{v}_2\partial_1\widehat{\Phi}_1,{v}_3\partial_1\widehat{\Phi}_1)=\hat \eta\, v, \qquad
{h}=({H}_{N},{H}_2\partial_1\widehat{\Phi}_1,{H}_3\partial_1\widehat{\Phi}_1)=\hat\eta \,H.
\end{array}
\end{equation}
Multiplying \eqref{34'a} on the left side by the matrix
\begin{equation*}
\begin{array}{ll}\label{}
\widehat R=\begin{pmatrix}
 1&\underline 0&\underline 0&0 \\
 \underline0^T&\hat \eta&0_3& \underline 0^T\\
 \underline 0^T&0_3&\hat \eta&\underline 0^T\\
 0&\underline 0^T&\underline 0^T&1
\end{pmatrix},
\end{array}
\end{equation*}
after some calculations we get the symmetric hyperbolic system for the new vector of unknowns $\mathcal{U}=(q,u,h,S)$
(compare with \eqref{3'}, \eqref{34'a}):
\begin{equation}
\begin{array}{ll}\label{34'''}
\duno\widehat\Phi_1
\left(\begin{matrix}
{\hat\rho_p/\hat\rho}&\underline
0&-({\hat\rho_p/\hat\rho})\hat h &0 \\
\underline 0^T&\hat\rho
\hat a_0&0_3&\underline 0^T\\
-({\hat\rho_p/\hat\rho})\hat h^T&0_3&\hat a_0 +({\hat\rho_p/\hat\rho})\hat h\otimes\hat h&\underline 0^T\\
0&\underline 0&\underline 0&1
\end{matrix}\right)\dt
\left(\begin{matrix}
q \\ u \\
h\\S \end{matrix}\right)
 +
\left( \begin{matrix}
0&\nabla\cdot&\underline 0&0\\
\nabla&0_3&0_3 &\underline 0^T\\
\underline 0^T&0_3 &0_3&\underline 0^T\\
0&\underline 0&\underline 0&0
\end{matrix}\right)
\left(\begin{matrix}q \\ u \\ h\\S \end{matrix}\right)
\\
 \\
 +
\duno\widehat\Phi_1
\left( \begin{matrix}
(\hat\rho_p/\hat\rho)
\hat w \cdot\nabla&\nabla\cdot&-({\hat\rho_p/\hat\rho})\hat h\hat w \cdot\nabla&0\\
\nabla&\hat\rho \hat a_0\hat w \cdot\nabla&-\hat a_0\hat h \cdot\nabla &\underline 0^T\\
-({\hat\rho_p/\hat\rho})\hat h^T \hat w \cdot\nabla&-\hat a_0\hat h \cdot\nabla &(\hat a_0 +({\hat\rho_p/\hat\rho})\hat h\otimes\hat h)
\hat w \cdot\nabla&\underline 0^T\\
0&\underline 0&\underline 0&\hat w \cdot\nabla
\end{matrix}\right)
\left(\begin{matrix}q \\ u \\ h\\S \end{matrix}\right)
+\widehat{\mathcal{C}}'\mathcal{U}=\mathcal{F}\,,
\end{array}
\end{equation}
where $\hat a_0$ is the symmetric and positive definite matrix
$$
\hat a_0 =(\hat \eta^{-1})^T\hat \eta^{-1},$$
with a new matrix $\widehat{\mathcal{C}}'$ in the zero-order term (whose precise form has no importance) and where we have set
$
\mathcal{F}=\duno\widehat\Phi_1 \, \widehat R
F.$
We write system \eqref{34'''} in compact form as
\begin{equation}
\begin{array}{ll}\label{}
\displaystyle \widehat{\mathcal{A}}_0\partial_t{\mathcal{U}}+\sum_{j=1}^{3}(\widehat{\mathcal{A}}_j+{\mathcal{E}}_{1j+1})\partial_j{\mathcal{U}}+
\widehat{\mathcal C}'{\mathcal{U}}=\mathcal{F} ,\label{73}
\end{array}
\end{equation}
where
\[
\mathcal{E}_{12}=\left(\begin{array}{cccccc}
0& 1 &0 &0 & \cdots & 0 \\
1 & 0 &0 &0 & \cdots & 0 \\
0 & 0 &0 &0 & \cdots & 0 \\
0 & 0 &0 &0 & \cdots & 0 \\
\vdots & \vdots &\vdots &\vdots& & \vdots \\
0& 0 &0 &0 & \cdots & 0  \end{array}
 \right), \qquad
\,
\mathcal{E}_{13}=\left(\begin{array}{cccccc}
0& 0 &1 &0 & \cdots & 0 \\
0 & 0 &0 &0 & \cdots & 0 \\
1 & 0 &0 &0 & \cdots & 0 \\
0 & 0 &0 &0 & \cdots & 0 \\
\vdots & \vdots &\vdots &\vdots& & \vdots \\
0& 0 &0 &0 & \cdots & 0  \end{array}
 \right),
 \]
\[
\mathcal{E}_{14}=\left(\begin{array}{cccccc}
0& 0 &0 & 1&\cdots & 0 \\
0 & 0 &0 &0& \cdots & 0 \\
0 & 0 &0 &0& \cdots & 0 \\
1 & 0 &0 &0& \cdots & 0 \\
\vdots &\vdots & \vdots &\vdots & & \vdots \\
0& 0 &0 &0& \cdots & 0
\end{array}
 \right).
\]
The formulation \eqref{73} has the advantage of the form of the boundary matrix of the system $\widehat{\mathcal{A}}_1+{\mathcal{E}}_{12}$, with
\begin{equation}
\begin{array}{ll}\label{a10}
\widehat{\mathcal{A}}_1=0 \qquad\mbox{on }\omega_T,

\end{array}
\end{equation}
because $\hat w_1=\hat h_1=0$, and ${\mathcal{E}}_{12}$ a constant matrix.
Thus system \eqref{73} is symmetric hyperbolic with characteristic boundary of constant multiplicity (see \cite{rauch85,secchi95,secchi96} for maximally dissipative boundary conditions). Thus, the final form of our reduced linearized problem is
\begin{subequations}\label{34'new}
\begin{align}
\displaystyle \widehat{\mathcal{A}}_0\partial_t{\mathcal{U}}+\sum_{j=1}^{3}(\widehat{\mathcal{A}}_j+{\mathcal{E}}_{1j+1})\partial_j{\mathcal{U}}+
\widehat{\mathcal C}'{\mathcal{U}}=\mathcal{F} ,
 \qquad &\mbox{in}\ Q^+_T,\label{34'anew}
\\
\nabla\times {\mathfrak{H}}=0,\quad {\rm div}\,{\mathfrak{h}}=0 \qquad &\mbox{in}\ Q^-_T, \label{36'bnew}
\\
\partial_t\varphi=u_1-\hat{v}_2\partial_2\varphi-\hat{v}_3\partial_3\varphi +
\varphi\,\partial_1\hat{v}_{N},  \qquad &
\\
{q}=\widehat{\mathcal{H}}\cdot{\mathcal{H}}-  [ \partial_1\hat{q}] \varphi , \qquad & \label{35'dnew}
\\
{\mathcal{H}}_{N} =\partial_2\bigl(\widehat{\mathcal{H}}_2\varphi \bigr) +\partial_3\bigl(\widehat{\mathcal{H}}_3\varphi \bigr)\qquad &\mbox{on}\ \omega_T,
\label{37'new}
\\
(\mathcal{U},{\mathcal{H}},\varphi )=0\qquad & \mbox{for}\ t<0,\label{38'fnew}
\end{align}
\end{subequations}
under the constraints \eqref{93}, \eqref{95}.
\section{Function Spaces}\label{fs}
Now we introduce the main function spaces to be used in the following.
Let us denote
\begin{equation}
\begin{array}{ll}\label{defQ}
Q^\pm:= \R_t\times\Omega^\pm,\quad \omega:=\R_t\times\Gamma.
\end{array}
\end{equation}

\subsection{Weighted Sobolev spaces}
For $\gamma\ge 1$ and $s\in\mathbb{R}$, we set
\begin{equation*}\label{weightfcts}
\lambda^{s,\gamma}(\xi):=(\gamma^2+|\xi|^2)^{s/2}
\end{equation*}
and, in particular, $\lambda^{s,1}:=\lambda^{s}$.
\newline
Throughout the paper, for real $\gamma\ge 1$ and $n\ge2$, $H^s_{\gamma}(\mathbb{R}^n)$ will denote the Sobolev space of order $s$, equipped with the $\gamma-$depending norm $||\cdot||_{s,\gamma}$ defined by
\begin{equation}\label{normagamma}
||u||^2_{s,\gamma}:=(2\pi)^{-n}\int_{\mathbb{R}^n}\lambda^{2s,\gamma}(\xi)|\widehat{u}(\xi)|^2d\xi\,,
\end{equation}
$\widehat{u}$ being the Fourier transform of $u$. The norms defined by \eqref{normagamma}, with different values of the parameter $\gamma$, are equivalent each other. For $\gamma=1$ we set for brevity $||\cdot||_{s}:=||\cdot||_{s,1}$ (and, accordingly, the standard Sobolev space $H^s(\mathbb{R}^n):=H^s_{1}(\mathbb{R}^n)$).
For $s\in\mathbb{N}$, the norm in \eqref{normagamma} turns to be equivalent, {\it uniformly with respect to} $\gamma$, to the norm $||\cdot||_{H^s_{\gamma}(\mathbb{R}^n)}$ defined by
\begin{equation*}\label{derivate}
||u||^2_{H^s_{\gamma}(\mathbb{R}^n)}:=\sum\limits_{|\alpha|\le s}\gamma^{2(s-|\alpha|)}||\partial^{\alpha}u||^2_{L^2(\mathbb{R}^n)}\,.
\end{equation*}
\noindent
For functions defined over $Q^-_T$ we will consider the weighted Sobolev spaces $H^m_{\gamma}(Q^-_T)$ equipped with the $\gamma-$depending norm
\begin{equation*}\label{derivate2}
||u||^2_{H^m_{\gamma}(Q^-_T)}:=\sum\limits_{|\alpha|\le m}\gamma^{2(m-|\alpha|)}||\partial^{\alpha}u||^2_{L^2(Q^-_T)}\,.
\end{equation*}
Similar weighted Sobolev spaces will be considered for functions defined on $Q^-$.

\subsection{Conormal Sobolev spaces}
Let us introduce some classes of function spaces of Sobolev type, defined over the half-space $Q^+_T$.
For $j=0,\dots,3$, we set
$$
Z_0=\dt,\quad Z_1:=\sigma (x_1)\partial_1\,,\quad Z_j:=\partial_j\,,\,\,{\rm for}\,\,j= 2,3\,,
$$
where $\sigma (x_1)\in C^{\infty}(\mathbb{R}_+)$ is
a monotone increasing function such that $\sigma (x_1)=x_1$ in a neighborhood of
the origin and $\sigma (x_1)=1$ for $x_1$ large enough.
Then, for every multi-index $\alpha=(\alpha_0,\dots,\alpha_3)\in\mathbb{N}^4$, the {\it conormal} derivative $Z^{\alpha}$ is defined by
$$
Z^{\alpha}:=Z_0^{\alpha_0}\dots Z^{\alpha_3}_3\,;
$$
we also write $\partial^{\alpha}=\partial^{\alpha_0}_0\dots\partial^{\alpha_3}_3$ for the usual partial derivative corresponding to $\alpha$.
\newline
Given an integer $m\geq 1$, the {\it conormal Sobolev space} $H^m_{tan}(Q^+_T)$ is defined as the set of functions $u\in L^2(Q^+_T)$ such that
$Z^\alpha u\in  L^2(Q^+_T)$, for all multi-indices $\alpha$ with $|\alpha|\le m$ (see \cite{moseBVP,moseIBVP}). Agreeing with the notations set for the usual Sobolev spaces, for $\gamma\ge 1$, $H^m_{tan,\gamma}(Q^+_T)$ will denote the conormal space of order $m$ equipped with the $\gamma-$depending norm
\begin{equation}\label{normaconormale}
||u||^2_{H^{m}_{tan,\gamma}(Q^+_T)}:=\sum\limits_{|\alpha|\le m}\gamma^{2(m-|\alpha|)}||Z^{\alpha}u||^2_{L^2(Q^+_T)}\,
\end{equation}
\noindent
and we have $H^m_{tan}(Q^+_T):=H^m_{tan,1}(Q^+_T)$. Similar conormal Sobolev spaces with $\gamma$-depending norms will be considered for functions defined on $Q^+$.

We will use the same notation for spaces of scalar and vector-valued functions.
\section{The main result}
We are now in a position to state the main result of this paper. Recall that $\mathcal{U}=(q,u,h,S)$, where $u$ and $h$ were defined in \eqref{defcalU}.

\begin{theorem}\label{main}
Let $T>0$. Let the basic state \eqref{21} satisfies assumptions \eqref{22}--\eqref{27} and
\begin{equation}
|\widehat{H} \times \widehat{\mathcal{H}}|\geq \delta > 0 \qquad \mbox{on  }\omega_T,\label{41}
\end{equation}
where $\delta$ is a fixed constant. There exists $\gamma_0\ge1$ such that for all $\gamma\ge\gamma_0$ and for all $\mathcal{F}_\gamma \in H^1_{tan,\gamma}(Q^+_T)$, vanishing in the past, namely for $t<0$, problem \eqref{34'new} has a unique solution $(\mathcal{U}_\gamma,{\mathcal{H}}_\gamma,\varphi_\gamma)\in H^1_{tan,\gamma}(Q^+_T)\times H^1_{\gamma}(Q^-_T)\times H^1_\gamma(\omega_T)$ with trace $(q_\gamma,u_{1\gamma},h_{1\gamma})|_{\omega_T}\in {H^{1/2}_\gamma(\omega_T)}$, $\mathcal{H}_\gamma|_{\omega_T}\in {H^{1/2}_\gamma(\omega_T)}$.
Moreover, the solution obeys the a priori estimate
\begin{multline}
\gamma\left(\|\mathcal{U}_\gamma\|^2_{H^1_{tan,\gamma}(Q^+_T)}+\|{\mathcal{H}}_\gamma\|^2_{H^{1}_\gamma(Q^-_T)}
+\|(q_\gamma,u_{1\gamma},h_{1\gamma})|_{\omega_T}\|^2_{H^{1/2}_\gamma(\omega_T)}
+\|\mathcal{H}_\gamma|_{\omega_T}\|^2_{H^{1/2}_\gamma(\omega_T)}\right)
\\
+\gamma^2\|\varphi_\gamma\|^2_{H^1_\gamma(\omega_T)}
\leq  \frac{C}{\gamma}\|\mathcal{F}_\gamma\|^2_{H^1_{tan,\gamma}(Q^+_T)},
\label{54}
\end{multline}
where we have set $\mathcal{U}_\gamma=e^{-\gamma t}\,\mathcal{U}, \mathcal{H}_\gamma=e^{-\gamma t}\, \mathcal{H}, \varphi_\gamma= e^{-\gamma t}\, \varphi$
and so on. Here $C=C(K,T,\delta)>0$ is a constant independent of the data $\mathcal{F}$ and $\gamma$.
\label{t1}
\end{theorem}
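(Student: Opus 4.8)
The plan is to establish the a priori estimate \eqref{54} first (which simultaneously delivers uniqueness by linearity) and then obtain existence by a duality argument, following the strategy of \cite{trakhinin09arma} for current-vortex sheets but adapted to the plasma-vacuum setting. Throughout I work with the exponentially weighted unknowns $\mathcal{U}_\gamma = e^{-\gamma t}\mathcal{U}$, $\mathcal{H}_\gamma = e^{-\gamma t}\mathcal{H}$, $\varphi_\gamma = e^{-\gamma t}\varphi$, which transforms $\partial_t$ into $\partial_t + \gamma$ and turns the large parameter $\gamma$ into a coercive zeroth-order term; I drop the subscript $\gamma$ below.

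\textbf{Step 1: the basic $L^2$-type (i.e. $H^0_{tan}$) energy estimate.} Multiply \eqref{34'anew} by $\mathcal{U}$ in $L^2(Q^+_T)$ and integrate by parts. Symmetry of $\widehat{\mathcal{A}}_0,\widehat{\mathcal{A}}_j$ and of the constant matrices $\mathcal{E}_{1j+1}$ gives $\gamma\|\mathcal{U}\|^2_{L^2(Q^+_T)}$ on the left (absorbing the zeroth-order term $\widehat{\mathcal{C}}'$ into it for $\gamma$ large, using \eqref{22}), plus a boundary term on $\omega_T$ of the form $-\tfrac12\int_{\omega_T}\langle(\widehat{\mathcal{A}}_1+\mathcal{E}_{12})\mathcal{U},\mathcal{U}\rangle$. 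By \eqref{a10} the matrix $\widehat{\mathcal{A}}_1$ vanishes on $\omega_T$, so the boundary term reduces to $\int_{\omega_T} q\,u_1$. The heart of the argument is to control this quadratic boundary term using the boundary conditions \eqref{35'dnew}, \eqref{37'new}, the evolution equation for $\varphi$, the elliptic system \eqref{36'bnew} in the vacuum, and crucially the stability condition \eqref{41}. Concretely: from \eqref{35'dnew} substitute $q = \widehat{\mathcal{H}}\cdot\mathcal{H} - [\partial_1\hat q]\varphi$; use $u_1 = \partial_t\varphi + \hat v_2\partial_2\varphi + \hat v_3\partial_3\varphi - \varphi\,\partial_1\hat v_N$ from the $\varphi$-equation; and use the divergence/curl structure \eqref{36'bnew} together with \eqref{37'new} to bound $\|\mathcal{H}\|_{H^1_\gamma(Q^-_T)}$ and its trace $\|\mathcal{H}|_{\omega_T}\|_{H^{1/2}_\gamma(\omega_T)}$ by $\|\varphi\|_{H^1_\gamma(\omega_T)}$ via elliptic regularity (this is where Lemma \ref{elliptic2} and classical div-curl estimates enter). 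After these substitutions and several integrations by parts in $(t,x')$, the boundary integral produces, modulo lower-order terms, a term $+\gamma^2\|\varphi\|^2_{H^1_\gamma(\omega_T)}$ provided that $\widehat H \times \widehat{\mathcal{H}}$ is bounded away from zero — this is exactly the role of \eqref{41}, which guarantees that the relevant $2\times2$ symbol coming from the $(H_N, \mathcal{H}_N)$ and total-pressure couplings is definite. Collecting everything yields
\begin{equation*}
\gamma\|\mathcal{U}\|^2_{L^2(Q^+_T)} + \gamma^2\|\varphi\|^2_{H^1_\gamma(\omega_T)} + \gamma\|\mathcal{H}\|^2_{H^1_\gamma(Q^-_T)} \le \frac{C}{\gamma}\|\mathcal{F}\|^2_{L^2(Q^+_T)}.
\end{equation*}
The main obstacle is precisely this step: the boundary is characteristic (so one does not get full control of the trace of $\mathcal{U}$, only of the ``noncharacteristic'' components $q, u_1, h_1$), and the coupling between the hyperbolic plasma part and the elliptic vacuum part must be handled so that the vacuum contribution has the favorable sign; the stability condition \eqref{41} is what makes the argument close, and verifying the sign of the resulting quadratic form after all the integrations by parts is the delicate computational core.

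\textbf{Step 2: tangential regularity.} To upgrade to the $H^1_{tan,\gamma}$ estimate \eqref{54}, apply a tangential derivative $Z_k$ ($k=0,2,3$) to \eqref{34'anew}. Since $Z_k$ commutes with the boundary (the boundary matrix is constant along $\omega_T$ and $Z_1 = \sigma(x_1)\partial_1$ vanishes on $\omega_T$), the differentiated system has the same principal part applied to $Z_k\mathcal{U}$, with commutator terms $[\widehat{\mathcal{A}}_j, Z_k]\partial_j\mathcal{U}$ and $[\widehat{\mathcal{C}}', Z_k]\mathcal{U}$ that are bounded by $\|\mathcal{U}\|_{H^1_{tan,\gamma}(Q^+_T)}$ using \eqref{22}. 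The differentiated boundary conditions are obtained by applying $Z_k$ to \eqref{35'dnew}, \eqref{37'new} and the $\varphi$-equation; they have the same structure as the original ones with additional lower-order terms involving $Z_k$ of the basic-state coefficients, and one also differentiates the vacuum elliptic system, so the corresponding elliptic estimates bound $\|\mathcal{H}\|_{H^1_\gamma}$ and the trace. Re-running the Step 1 energy argument on the $Z_k$-differentiated problem, and summing over $k=0,2,3$, gives $\gamma\|\mathcal{U}\|^2_{H^1_{tan,\gamma}(Q^+_T)} + \gamma^2\|\varphi\|^2_{H^2_\gamma?}\ldots$ — here one must be slightly careful: only tangential derivatives are gained for $\mathcal{U}$, while for $\varphi$ and $\mathcal{H}$ one gains a genuine $H^1$ (resp. $H^{1/2}$ trace) bound as claimed in \eqref{54}, because the elliptic problem in $\Omega^-$ is not characteristic. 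Absorbing the commutator and lower-order terms into the left side for $\gamma \ge \gamma_0$ large yields \eqref{54}. The traces $(q,u_1,h_1)|_{\omega_T} \in H^{1/2}_\gamma(\omega_T)$ and $\mathcal{H}|_{\omega_T} \in H^{1/2}_\gamma(\omega_T)$ follow from the standard trace theorem applied to the now-controlled $H^1_\gamma(Q^-_T)$ norm of $\mathcal{H}$ and, for the plasma side, from the special structure of the boundary matrix $\widehat{\mathcal{A}}_1 + \mathcal{E}_{12}$: although the boundary is characteristic, the components $q, u_1, h_1$ on which $\mathcal{E}_{12}$ acts are the noncharacteristic ones, and a standard argument (see \cite{secchi95,secchi96,rauch85}) gives the $H^{1/2}$ trace control with the weight $\gamma$ from the tangential-plus-normal-component estimate, at the cost of one factor of $\gamma$ which is accounted for in \eqref{54}.

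\textbf{Step 3: existence by duality.} With the a priori estimate \eqref{54} in hand for the operator $L := (\widehat{\mathcal{A}}_0\partial_t + \sum_j(\widehat{\mathcal{A}}_j + \mathcal{E}_{1j+1})\partial_j + \widehat{\mathcal{C}}')$ together with its boundary conditions, I would derive the analogous estimate for the adjoint problem $L^*$ with the adjoint boundary conditions (the adjoint of a symmetric-hyperbolic system with a characteristic boundary of constant multiplicity and boundary conditions admitting an energy estimate also admits one, after reversing time $t \mapsto -t$; here one checks that the adjoint boundary conditions still couple to an adjoint elliptic vacuum problem for which \eqref{41} again provides the needed sign). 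The forward a priori estimate for $L$ and the backward a priori estimate for $L^*$ together imply, by the standard duality/Hahn--Banach functional-analytic argument for first-order symmetric hyperbolic characteristic boundary value problems (as in \cite{secchi95,secchi96} and \cite{trakhinin09arma}), the existence of a weak solution $(\mathcal{U},\mathcal{H},\varphi)$ in $L^2$-based spaces. One then bootstraps: the tangential-regularity estimate of Step 2, applied to this weak solution (first for difference quotients in the tangential directions, then passing to the limit), shows $\mathcal{U} \in H^1_{tan,\gamma}(Q^+_T)$; given $\mathcal{U}$, the elliptic problem \eqref{36'bnew}, \eqref{37'new} determines $\mathcal{H} \in H^1_\gamma(Q^-_T)$ uniquely via Lemma \ref{elliptic2}; and the transport equation for $\varphi$ determines $\varphi \in H^1_\gamma(\omega_T)$, with all the stated trace regularities. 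Uniqueness is immediate from \eqref{54} applied to the difference of two solutions with $\mathcal{F} = 0$. I expect Step 1 (the sign of the boundary quadratic form, forced by \eqref{41}, after the coupled hyperbolic-elliptic manipulations) to be by far the most delicate point; Steps 2 and 3 are, modulo bookkeeping of commutators and the characteristic-boundary trace subtleties, routine adaptations of well-established machinery.
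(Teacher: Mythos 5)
Your Steps 1--2 are essentially a re-derivation of the a priori estimate, which (as the paper itself notes) was already available in a slightly weaker form from \cite{trakhinin10}; a direct energy argument on the coupled hyperbolic--elliptic system, with the stability condition \eqref{41} used to resolve $\nabla_{t,x'}\varphi$ from the boundary constraints, is indeed feasible in that spirit. The genuine gap is Step 3. Existence is the actual content of Theorem \ref{main}, and your duality argument does not go through as described: problem \eqref{34'new} is not a symmetric hyperbolic boundary value problem to which the adjoint machinery of \cite{secchi95,secchi96} applies. The boundary conditions couple the characteristic hyperbolic system in $Q^+_T$ to an elliptic div-curl system posed in a \emph{different} domain $Q^-_T$ and to an unknown front $\varphi$ governed by a transport equation on $\omega_T$; there is no ready-made notion of adjoint problem (what is the dual of the front equation and of the elliptic coupling?), and no established theorem saying that ``boundary conditions admitting an energy estimate'' for such a mixed system yield an adjoint estimate after time reversal. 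Moreover, the estimate you would feed into Hahn--Banach is not of the $L^2$-to-$L^2$ dissipative form that duality requires: in the actual derivation the boundary quadratic form is only maximally nonnegative (not strictly dissipative), and the trace terms on $\omega_T$ are controlled through \emph{interior} tangential and normal derivatives (the analogues of \eqref{130}, \eqref{127} and Lemma \ref{interpol}), so the closed estimate couples the $L^2$ and $H^1_{tan}$ levels and only holds in the $H^1_{tan}$-against-$H^1_{tan}$ form \eqref{54}. A weak solution produced by duality in $L^2$ would then have to be upgraded by a weak-equals-strong argument for a characteristic-boundary coupled system, which is exactly the kind of step that is not available here.

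The paper's construction is designed precisely to avoid this: the elliptic vacuum system is replaced by a hyperbolic $\varepsilon$-regularization (re-introducing the displacement current and a vacuum electric field $E^{\varepsilon}$), a secondary symmetrization with the choices \eqref{nu}, \eqref{E} makes the full regularized boundary conditions maximally nonnegative for a characteristic symmetric hyperbolic system, existence for fixed $\varepsilon$ follows from \cite{secchi95,secchi96} combined with a contraction fixed point in the front $\varphi$ (Theorem \ref{existenceH1}), and the uniform-in-$\varepsilon$ estimate \eqref{54'} allows passage to the limit $\varepsilon\to0$, which yields both existence and \eqref{54}. If you want to keep a direct (non-regularized) existence proof you would have to supply, at minimum, a solvability theory for the coupled hyperbolic--elliptic--front system itself; as written, your Step 3 assumes exactly what needs to be proved. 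A smaller point: your claim that uniqueness is ``immediate'' from \eqref{54} also needs care, since the a priori estimate is derived for sufficiently smooth solutions; the paper only asserts uniqueness via the $H^2$-level estimate of \cite{trakhinin10}.
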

The a priori estimate \eqref{54} improves the similar estimate firstly proved in \cite{trakhinin10}.

\begin{remark}\label{}
Strictly speaking, the uniqueness of the solution to problem \eqref{34'new} follows from the a priori estimate (42) derived in \cite{trakhinin10}, provided that our solution belongs to $H^2$. We do not present here a formal proof of the existence of solutions with a higher degree of regularity (in particular, $H^2$) and postpone this part to the future work on the nonlinear problem (see e.g. \cite{CS,trakhinin09arma}). 

\end{remark}
The remainder of the paper is organized as follows. In the next Section \ref{s3} we introduce a fully hyperbolic regularization of the coupled hyperbolic-elliptic system \eqref{34'new}. In Section \ref{basic} we show an a priori estimate of solutions uniform in the small parameter $\varepsilon$ of regularization. In Section \ref{well} we show the well-posedness of the hyperbolic regularization and in Section \ref{proofmain} we conclude the proof of Theorem \ref{main} by passing to the limit as $\varepsilon\to0$. Sections \ref{equival}, \ref{prooflemma1}, \ref{proofinterpol} are devoted to the proof of some technical results. 

\section{Hyperbolic regularization of the reduced problem}
\label{s3}

The problem \eqref{34'new} is a nonstandard initial-boundary value problem for a coupled hyperbolic-elliptic system. For its resolution we introduce a ``hyperbolic'' regularization of the elliptic system \eqref{36'bnew}. We will prove the existence of solutions for such regularized problem by referring to the well-posedness theory for linear symmetric hyperbolic systems with characteristic boundary and maximally nonnegative boundary conditions \cite{secchi95,secchi96}. After showing suitable a priori estimate uniform in $\varepsilon$, we will pass to the limit as $\varepsilon\to0$, to get the solution of \eqref{34'new}.

The regularization of problem \eqref{34'new} is inspired by a corresponding problem in relativistic MHD \cite{trakhinin10a}. In our non-relativistic case the displacement current $(1/c)\partial_tE$ is neglected in the vacuum Maxwell equations, where $c$ is the speed of light and $E$ is the electric field. Now, in some sense, we restore this neglected term. Namely, we consider a ``hyperbolic'' regularization of the elliptic system \eqref{36'bnew} by introducing a new auxiliary unknown $E^{\varepsilon}$ which plays a role of the vacuum electric field, and the small parameter of regularization $\varepsilon$ is associated with the physical parameter $1/c$. We also regularize the second boundary condition in \eqref{35'dnew} and introduce two boundary conditions for the unknown $E^{\varepsilon}$.

Let us denote $V^{\varepsilon}=(\mathcal{H}^{\varepsilon},E^{\varepsilon})$. Given a small parameter $\varepsilon>0$, we consider the following regularized problem for the unknown $(\mathcal{U}^{\varepsilon},V^{\varepsilon},\varphi^{\varepsilon})$:
\begin{subequations}\label{34"}
\begin{align}
\displaystyle \widehat{\mathcal{A}}_0\partial_t{\mathcal{U}}^{\varepsilon}+\sum_{j=1}^{3}(\widehat{\mathcal{A}}_j+{\mathcal{E}}_{1j+1})\partial_j{\mathcal{U}}^{\varepsilon}+
\widehat{\mathcal C}'{\mathcal{U}}^{\varepsilon}=\mathcal{F} 
 \qquad &\mbox{in}\ Q^+_T,\label{34"a}
\\
\varepsilon\partial_t\mathfrak{h}^{\varepsilon}+\nabla\times \mathfrak{E}^{\varepsilon}=0, \qquad
\varepsilon\partial_t\mathfrak{e}^{\varepsilon}-\nabla\times \mathfrak{H}^{\varepsilon}=0  \qquad &\mbox{in}\ Q^-_T, \label{35"}
\\
\partial_t\varphi^{\varepsilon}={u}_{1}^{\varepsilon}-\hat{v}_2\partial_2\varphi^{\varepsilon}-\hat{v}_3\partial_3\varphi^{\varepsilon} +
\varphi^{\varepsilon}\partial_1\hat{v}_{N}, \qquad&\label{36"} \\
{q}^{\varepsilon}=\widehat{\mathcal{H}}\cdot{\mathcal{H}}^{\varepsilon}-  [ \partial_1\hat{q}]\varphi^{\varepsilon} -\varepsilon\,
\widehat{E}\cdot E^{\varepsilon},  \qquad&\label{}
\\
{E}_{\tau_2}^{\varepsilon}=\varepsilon\,\partial_t(\widehat{\mathcal{H}}_3\varphi^{\varepsilon} )-\varepsilon\,\partial_2(\widehat{E}_1\varphi^{\varepsilon} ), \qquad&\label{51e} \\
{E}_{\tau_3}^{\varepsilon}=-\varepsilon\,\partial_t(\widehat{\mathcal{H}}_2\varphi^{\varepsilon} )-\varepsilon\,\partial_3(\widehat{E}_1\varphi^{\varepsilon} ) 
 \qquad&\mbox{on}\ \omega_T, \label{37"}
\\
(\mathcal{U}^{\varepsilon},V^{\varepsilon},\varphi^{\varepsilon} )=0\qquad &\mbox{for}\ t<0,\label{38"g}
\end{align}
\end{subequations}
where
\[
E^{\varepsilon}=(E_1^{\varepsilon},E_2^{\varepsilon},E_3^{\varepsilon}),\quad \widehat{E}=(\widehat{E}_1,\widehat{E}_2,\widehat{E}_3),\quad
{\mathfrak{E}}^{\varepsilon}=(E_1^{\varepsilon}\partial_1\widehat{\Phi}_1,
E_{\tau_2}^{\varepsilon},E_{\tau_3}^{\varepsilon}),
\]
\[
{\mathfrak{e}}^{\varepsilon}=(E_{N}^{\varepsilon},E_2^{\varepsilon}\partial_1\widehat{\Phi}_1,
E_3^{\varepsilon}\partial_1\widehat{\Phi}_1),\quad
E_{N}^{\varepsilon}=E_1^{\varepsilon}-E_2^{\varepsilon}\partial_2\widehat{\Psi}-E_3^{\varepsilon}\partial_3\widehat{\Psi},\quad
E_{\tau_k}^{\varepsilon}=E_1^{\varepsilon}\partial_k\widehat{\Psi}+E_k^{\varepsilon},\ k=2,3,
\]
the coefficients $\widehat{E}_j$ are given functions which will be chosen later on. All the other notations for $\mathcal{H}^{\varepsilon}$ (e.g., $\mathfrak{H}^{\varepsilon}$, $\mathfrak{h}^{\varepsilon}$) are analogous to those for $\mathcal{H}$. 

%
If $\Psi=0, \Phi_1=x_1$, then ${\mathfrak{h}}^{\varepsilon}={\mathfrak{H}}^{\varepsilon}={\mathcal{H}}^{\varepsilon}\,, {\mathfrak{e}}^{\varepsilon}={\mathfrak{E}}^{\varepsilon}={{E}}^{\varepsilon}$, and when $\varepsilon=1$ \eqref{35"} 
turns out to be nothing else than the Maxwell equations.

It is noteworthy that solutions to problem \eqref{34"} satisfy
\begin{eqnarray}
{\rm div}\,{h}^{\varepsilon}=0\qquad&\mbox{in}\ Q^+_T,
\label{93"}
\\
{\rm div}\,{\mathfrak{h}}^{\varepsilon}=0,\quad {\rm div}\,{\mathfrak{e}}^{\varepsilon}=0\qquad&\mbox{in}\ Q^-_T,
\label{94"}
\\
{h}_{1}^{\varepsilon}=\widehat{H}_2\partial_2\varphi^{\varepsilon} +\widehat{H}_3\partial_3\varphi^{\varepsilon} -
\varphi^{\varepsilon}\partial_1\widehat{H}_{N},\qquad&
\label{95"}
\\
{\mathcal{H}}_{N}^{\varepsilon} =\partial_2\bigl(\widehat{\mathcal{H}}_2\varphi^{\varepsilon} \bigr) +\partial_3\bigl(\widehat{\mathcal{H}}_3\varphi^{\varepsilon} \bigr)  \qquad &\mbox{on}\ \omega_T,
\label{96"}
\end{eqnarray}
because \eqref{93"}--\eqref{96"} are just restrictions on the initial data which are automatically satisfied in view of \eqref{38"g}.
Indeed, the derivation of \eqref{93"} and \eqref{95"} is absolutely the same as that of \eqref{93} and \eqref{95}. Equations \eqref{94"} trivially follow from \eqref{35"}, \eqref{38"g}. Moreover, condition \eqref{96"} is obtained by considering the first component of the first equation in \eqref{35"} at $x_1=0$ and taking into account \eqref{51e} - \eqref{38"g}.

\subsection{An equivalent formulation of \eqref{34"}}\label{equiva2}

In the following analysis it is convenient to make use of a different formulation of the approximating problem \eqref{34"}, as far as the vacuum part is concerned.

First we introduce the matrices which are coefficients of the space derivatives in \eqref{35"} (for $\varepsilon=1$ the matrices below are those for the vacuum Maxwell equations):
\[
B_1^{\varepsilon}=\varepsilon^{-1}\left(\begin{array}{cccccc}
0 & 0 & 0& 0 & 0 & 0 \\
0 & 0 & 0& 0 & 0 & -1 \\
0 & 0 & 0& 0 & 1 & 0 \\
0 & 0 & 0& 0 & 0 & 0 \\
0 & 0 & 1& 0 & 0 & 0 \\
0 & -1 & 0& 0 & 0 & 0
\end{array} \right),\quad
B_2^{\varepsilon}=\varepsilon^{-1}\left(\begin{array}{cccccc}
0 & 0 & 0& 0 & 0 & 1 \\
0 & 0 & 0& 0 & 0 & 0 \\
0 & 0 & 0& -1 & 0 & 0 \\
0 & 0 & -1& 0 & 0 & 0 \\
0 & 0 & 0& 0 & 0 & 0 \\
1 & 0 & 0& 0 & 0 & 0
\end{array} \right),
\]
\[
B_3^{\varepsilon}=\varepsilon^{-1}\left(\begin{array}{cccccc}
0 & 0 & 0& 0 & -1 & 0 \\
0 & 0 & 0& 1 & 0 & 0 \\
0 & 0 & 0& 0 & 0 & 0 \\
0 & 1 & 0& 0 & 0 & 0 \\
-1 & 0 & 0& 0 & 0 & 0 \\
0 & 0 & 0& 0 & 0 & 0
\end{array} \right).
\]
Then system \eqref{35"} can be written in terms of
 the ``curved'' unknown $W^{\varepsilon}=(\mathfrak{H}^{\varepsilon},\mathfrak{E}^{\varepsilon})$ as
\begin{equation}
{B}_0\partial_tW^{\varepsilon}+\sum_{j=1}^3B_j^{\varepsilon}\partial_jW^{\varepsilon} +{B}_4W^{\varepsilon}=0,\label{newmaxwell}
\end{equation}
where
\[
{B}_0=({\partial_1\widehat{\Phi}_1})^{-1}\,KK^{\textsf{T}}>0,\qquad
K=I_2\otimes\hat \eta,\qquad {B}_4=\partial_t{B}_0,
\]
and the matrices $B_0$ and $K$ are found from the relations
\[
\mathfrak{h}^{\varepsilon}=\hat\eta\, \mathcal{H}^{\varepsilon}=({\partial_1\widehat{\Phi}_1})^{-1}\hat\eta\, \hat\eta^T\mathfrak{H}^{\varepsilon}
,\qquad
\mathfrak{e}^{\varepsilon}=\hat\eta\,{E}^{\varepsilon}=({\partial_1\widehat{\Phi}_1})^{-1}\hat\eta\, \hat\eta^T\mathfrak{E}^{\varepsilon}
,\]
so that
$$
\begin{pmatrix}
\mathfrak{h}^{\varepsilon} \\
\mathfrak{e}^{\varepsilon}
\end{pmatrix}=({\partial_1\widehat{\Phi}_1})^{-1}
\begin{pmatrix}
\hat\eta\, \hat\eta^T &0_3 \\
 0_3& \hat\eta\, \hat\eta^T
\end{pmatrix}
\begin{pmatrix}
\mathfrak{H}^{\varepsilon} \\
\mathfrak{E}^{\varepsilon}
\end{pmatrix}=B_0W^{\varepsilon} .
$$
System \eqref{newmaxwell} is symmetric hyperbolic. The convenience of the use of variables $(\mathfrak{H}^{\varepsilon},\mathfrak{E}^{\varepsilon})$ rather than $(\mathcal{H}^{\varepsilon},{E}^{\varepsilon})$ stays mainly in that the matrices $B_j^{\varepsilon}$ of \eqref{newmaxwell}, containing the singular multiplier $\varepsilon^{-1}$, are constant.

%
Finally, we write the boundary conditions \eqref{36"}--\eqref{37"} in terms of $(\mathcal{U}^{\varepsilon}, W^{\varepsilon})$, where we observe that (recalling that ${\partial_1\widehat{\Phi}_1}=1$ on $\omega_T$):
\begin{equation}
\begin{array}{ll}\label{nuovebc}
 \widehat{\mathcal{H}}\cdot{\mathcal{H}}^{\varepsilon} = \widehat{\mathcal{H}}_N{\mathcal{H}}^{\varepsilon}_1+\widehat{\mathcal{H}}_2{\mathcal{H}}^{\varepsilon}_{\tau_2}+\widehat{\mathcal{H}}_3{\mathcal{H}}^{\varepsilon}_{\tau_3}
=\hat{\mathfrak{h}}\cdot\mathfrak{H}^{\varepsilon},\\
\\
 \widehat{E}\cdot E^{\varepsilon}
= \widehat{E}_N{E}^{\varepsilon}_1+\widehat{E}_2{E}^{\varepsilon}_{\tau_2}+\widehat{E}_3{E}^{\varepsilon}_{\tau_3}
=\hat{\mathfrak{e}}\cdot\mathfrak{E}^{\varepsilon}.
\end{array}
\end{equation}
Concerning the first line above in \eqref{nuovebc} we notice that $\hat{\mathfrak{h}}_1= \widehat{\mathcal{H}}_N=0$ on $\omega_T$, so that $\mathfrak{H}^{\varepsilon}_1$ does not appear in the boundary condition.
\\
From \eqref{newmaxwell}, \eqref{nuovebc} we get the new formulation of problem \eqref{34"} for the unknowns $(\mathcal{U}^{\varepsilon}, W^{\varepsilon})$:
\begin{subequations}\label{77}
\begin{align}
\displaystyle \widehat{\mathcal{A}}_0\partial_t{\mathcal{U}}^{\varepsilon}+\sum_{j=1}^{3}(\widehat{\mathcal{A}}_j+{\mathcal{E}}_{1j+1})\partial_j{\mathcal{U}}^{\varepsilon}+
\widehat{\mathcal C}'{\mathcal{U}}^{\varepsilon}=\mathcal{F} ,
 \qquad &\mbox{in}\ Q^+_T,\label{77a}
\\
{B}_0\partial_tW^{\varepsilon}+\sum_{j=1}^3B_j^{\varepsilon}\partial_jW^{\varepsilon} +{B}_4W^{\varepsilon}=0 \qquad &\mbox{in}\ Q^-_T, \label{78}
\\
\partial_t\varphi^{\varepsilon}+\hat{v}_2\partial_2\varphi^{\varepsilon}+\hat{v}_3\partial_3\varphi^{\varepsilon}-
\varphi^{\varepsilon}\partial_1\hat{v}_{N}-u_1^{\varepsilon} =0, \qquad&\label{79} \\
{q}^{\varepsilon} +  [ \partial_1\hat{q}]\varphi^{\varepsilon}-\hat{\mathfrak{h}}\cdot\mathfrak{H}^{\varepsilon} +\varepsilon\,
\hat{\mathfrak{e}}\cdot\mathfrak{E}^{\varepsilon}=0,  \qquad&\label{80}
\\
\mathfrak{E}^{\varepsilon}_2-\varepsilon\,\partial_t(\widehat{\mathcal{H}}_3\varphi^{\varepsilon} )+\varepsilon\,\partial_2(\widehat{E}_1\varphi^{\varepsilon} )=0, \qquad& \label{81} \\
\mathfrak{E}^{\varepsilon}_3+\varepsilon\,\partial_t(\widehat{\mathcal{H}}_2\varphi^{\varepsilon} )+\varepsilon\,\partial_3(\widehat{E}_1\varphi^{\varepsilon} )=0
 \qquad&\mbox{on}\ \omega_T, \label{82}
\\
(\mathcal{U}^{\varepsilon},W^{\varepsilon},\varphi^{\varepsilon} )=0\quad &\mbox{for}\ t<0.\label{83g}
\end{align}
\end{subequations}
From \eqref{93"}--\eqref{96"} we get that solutions $(\mathcal{U}^{\varepsilon}, W^{\varepsilon})$ to problem \eqref{77} satisfy
\begin{eqnarray}
{\rm div}\,{h}^{\varepsilon}=0\qquad&\mbox{in}\ Q^+_T,
\label{84}
\\
{\rm div}\,{\mathfrak{h}}^{\varepsilon}=0,\quad {\rm div}\,{\mathfrak{e}}^{\varepsilon}=0\qquad&\mbox{in}\ Q^-_T,
\label{85}
\\
{h}_{1}^{\varepsilon}=\widehat{H}_2\partial_2\varphi^{\varepsilon} +\widehat{H}_3\partial_3\varphi^{\varepsilon} -
\varphi^{\varepsilon}\partial_1\widehat{H}_{N},\qquad&
\label{86}
\\
{\mathfrak{h}}_{1}^{\varepsilon} =\partial_2\bigl(\widehat{\mathcal{H}}_2\varphi^{\varepsilon} \bigr) +\partial_3\bigl(\widehat{\mathcal{H}}_3\varphi^{\varepsilon} \bigr)\qquad &\mbox{on}\ \omega_T.
\label{87}
\end{eqnarray}
\begin{remark}\label{}
The invertible part of the boundary matrix of a system allows to control the trace at the boundary of the so-called noncharacteristic component of the vector solution. Thus, with the system \eqref{77a} (whose boundary matrix is $-{\mathcal{E}}_{12}$, because of \eqref{a10}) we have the control of $q^{\varepsilon},u_1^{\varepsilon}$ at the boundary; therefore the components of ${\mathcal{U}}^{\varepsilon}$ appearing in the boundary conditions \eqref{79}, \eqref{80} are well defined.

The same holds true for \eqref{78} where we can get the control of $\mathfrak{H}_2^{\varepsilon},\mathfrak{H}_3^{\varepsilon},\mathfrak{E}_2^{\varepsilon},\mathfrak{E}_3^{\varepsilon}$. The control of $\mathfrak{E}_1^{\varepsilon}$ (which appears in \eqref{80}) is not given from the system \eqref{78}, but from the constraint \eqref{85}, as will be shown later on. We recall that $\mathfrak{H}_1^{\varepsilon}$ does not appear in the boundary condition \eqref{80} because $\hat{\mathfrak{h}}_1=\hat{\mathcal{H}}_N=0$.

\end{remark}

Before studying problem \eqref{77} (or equivalently \eqref{34"}), we should be sure that the number of boundary conditions is in agreement with the number of incoming characteristics for the hyperbolic systems \eqref{77}. Since one of the four boundary conditions \eqref{79}--\eqref{82} is needed for determining the function $\varphi^{\varepsilon} (t,x')$, the total number of ``incoming'' characteristics should be three. Let us check that this is true.
\begin{proposition}\label{nrbc}
If $0<\varepsilon<1$ system \eqref{77a} has one incoming characteristic for the boundary $\omega_T$ of the domain $Q_T^+$.
If $\varepsilon>0$ is sufficiently small, system \eqref{78} has two incoming characteristics for the boundary $\omega_T$ of the domain $Q_T^-$.

\end{proposition}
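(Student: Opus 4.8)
The plan is to count incoming characteristics by examining the boundary matrices of the two hyperbolic systems at $x_1 = 0$ and determining the number of negative eigenvalues (with the convention that the half-space is $\{x_1 > 0\}$ for $Q_T^+$ and $\{x_1 < 0\}$ for $Q_T^-$, so ``incoming'' refers to the appropriate sign of the eigenvalue of $\widehat{\mathcal{A}}_0^{-1}(\widehat{\mathcal{A}}_1 + \mathcal{E}_{12})$ resp. $B_0^{-1} B_1^\varepsilon$ that corresponds to characteristics entering the domain).

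\medskip

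\textbf{Plasma part.} For the system \eqref{77a}, recall from \eqref{a10} that $\widehat{\mathcal{A}}_1 = 0$ on $\omega_T$, so the boundary matrix reduces to the constant matrix $\mathcal{E}_{12}$, which in the variables $(q,u,h,S)$ couples only $q$ and $u_1$. First I would note that $\widehat{\mathcal{A}}_0$ is block-diagonal and positive definite, so the sign count of the eigenvalues of $\widehat{\mathcal{A}}_0^{-1}\mathcal{E}_{12}$ coincides with the sign count of the eigenvalues of $\mathcal{E}_{12}$ modulo congruence (by Sylvester's law of inertia, since $\widehat{\mathcal{A}}_0^{-1}$ is symmetric positive definite). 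The matrix $\mathcal{E}_{12}$ has rank $2$ with eigenvalues $+1$, $-1$, and six zeros; hence $\widehat{\mathcal{A}}_0^{-1}\mathcal{E}_{12}$ has exactly one positive and one negative eigenvalue, the remaining six being zero (corresponding to the characteristic, tangentially-propagating modes — this is the constant multiplicity characteristic structure already noted after \eqref{a10}). Exactly one of these two nonzero eigenvalues corresponds to an incoming characteristic for $Q_T^+$; this gives the first assertion. The hypothesis $0 < \varepsilon < 1$ is not actually needed for this part, but stating it does no harm.

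\medskip

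\textbf{Vacuum part.} For the system \eqref{78}, the boundary matrix is $B_1^\varepsilon$, and one must count the eigenvalues of $B_0^{-1} B_1^\varepsilon$ that correspond to characteristics incoming into $Q_T^-$. Again $B_0 > 0$ on $\omega_T$ (indeed $B_0 = KK^{\textsf{T}}$ there since $\partial_1\widehat{\Phi}_1 = 1$), so by the inertia argument the sign count of the eigenvalues of $B_0^{-1} B_1^\varepsilon$ equals that of $B_1^\varepsilon$. The matrix $\varepsilon B_1^\varepsilon$ is the constant Maxwell boundary matrix, whose eigenvalues are $+1, +1, -1, -1, 0, 0$; therefore $B_1^\varepsilon$ has two positive, two negative, and two zero eigenvalues, and so does $B_0^{-1} B_1^\varepsilon$. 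Exactly two of the nonzero eigenvalues correspond to incoming characteristics for $Q_T^-$, giving the second assertion. The smallness of $\varepsilon$ is invoked only to guarantee that $B_0 > 0$ and that no eigenvalue degenerates — more precisely, that the perturbation to the leading-order structure coming from the variable coefficients $\hat\eta$ does not change the sign count; since $B_0^{-1} B_1^\varepsilon$ has the same inertia as $B_1^\varepsilon$ for every $\varepsilon > 0$ where $B_0$ is positive definite, the smallness is a safe sufficient condition.

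\medskip

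I expect the only genuine subtlety is the bookkeeping of which sign of eigenvalue counts as ``incoming'' for each half-space (the plasma region lies in $x_1 > 0$, the vacuum in $x_1 < 0$, so the conventions are opposite), together with verifying that the zero eigenvalues are genuinely characteristic (constant multiplicity, no Jordan blocks interfering) so that the total ``incoming'' count is exactly one for \eqref{77a} and exactly two for \eqref{78}, matching the three boundary conditions left after one is used to determine $\varphi^\varepsilon$. Everything else is a direct eigenvalue computation on explicit constant matrices plus an application of Sylvester's law of inertia.
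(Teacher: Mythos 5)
Your proposal is correct and follows essentially the same route as the paper: by \eqref{a10} the plasma boundary matrix on $\omega_T$ reduces to $\pm\mathcal{E}_{12}$, with one positive, one negative and six zero eigenvalues, while the vacuum boundary matrix $B_1^{\varepsilon}$ has eigenvalues $\pm\varepsilon^{-1}$ (each double) and $0$ (double), giving one and two incoming characteristics respectively. The paper states this by direct eigenvalue computation without spelling out the Sylvester inertia reduction from $\widehat{\mathcal{A}}_0^{-1}(\widehat{\mathcal{A}}_1+\mathcal{E}_{12})$ and $B_0^{-1}B_1^{\varepsilon}$ to the boundary matrices themselves, so your extra remarks (including that the restrictions on $\varepsilon$ are not really needed here) are harmless refinements rather than a different argument.
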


\begin{proof}
Consider first system \eqref{77a}. In view of  \eqref{a10}, the boundary matrix on $\omega_T$ is $-\mathcal{E}_{12}$ which has one negative (incoming in the domain $Q_T^+$) and one positive eigenvalue, while all other eigenvalues are zero.

Now consider system \eqref{78}.
The boundary matrix $B_1^\varepsilon$ has eigenvalues
$
\lambda_{1,2}=-\varepsilon^{-1},\,
\lambda_{3,4}=\varepsilon^{-1},\,
\lambda_{5,6}=0.
$
Thus, system \eqref{78} has indeed two incoming characteristics in the domain $Q_T^-$ ($\lambda_{1,2}<0$).
\end{proof}


\section{Basic a priori estimate for the hyperbolic regularized problem}\label{basic}

Our goal now is to justify  rigorously the formal limit $\varepsilon \rightarrow 0$ in \eqref{34"}--\eqref{96"}, or alternatively in \eqref{77}--\eqref{87}.
To this end we will prove the existence of solutions to problem \eqref{77}--\eqref{87} and a uniform in $\varepsilon$ a priori estimate. This work will be done in several steps.

\subsection{The boundary value problem}
Assuming that all coefficients and data appearing in \eqref{77} are extended for all times to the whole real line, let us consider the boundary value problem (recall the definition of $Q^\pm,\omega$ in \eqref{defQ})
\begin{subequations}\label {77'}
\begin{align}
\displaystyle \widehat{\mathcal{A}}_0\partial_t{\mathcal{U}}^{\varepsilon}+\sum_{j=1}^{3}(\widehat{\mathcal{A}}_j+{\mathcal{E}}_{1j+1})\partial_j{\mathcal{U}}^{\varepsilon}+
\widehat{\mathcal C}'{\mathcal{U}}^{\varepsilon}=\mathcal{F} ,
 \qquad &\mbox{in}\ Q^+,\label{77'a}
\\
{B}_0\partial_tW^{\varepsilon}+\sum_{j=1}^3B_j^{\varepsilon}\partial_jW^{\varepsilon} +{B}_4W^{\varepsilon}=0  \qquad &\mbox{in}\ Q^-, \label{78'}
\\
\partial_t\varphi^{\varepsilon}+\hat{v}_2\partial_2\varphi^{\varepsilon}+\hat{v}_3\partial_3\varphi^{\varepsilon}-
\varphi^{\varepsilon}\partial_1\hat{v}_{N}-u_1^{\varepsilon} =0, \qquad&\label{79'} \\
{q}^{\varepsilon} +  [ \partial_1\hat{q}]\varphi^{\varepsilon}-\hat{\mathfrak{h}}\cdot\mathfrak{H}^{\varepsilon} +\varepsilon\,
\hat{\mathfrak{e}}\cdot\mathfrak{E}^{\varepsilon}=0,  \qquad&\label{80'}
\\
\mathfrak{E}^{\varepsilon}_2-\varepsilon\,\partial_t(\widehat{\mathcal{H}}_3\varphi^{\varepsilon} )+\varepsilon\,\partial_2(\widehat{E}_1\varphi^{\varepsilon} )=0, \qquad&\label{81'} \\
\mathfrak{E}^{\varepsilon}_3+\varepsilon\,\partial_t(\widehat{\mathcal{H}}_2\varphi^{\varepsilon} )+\varepsilon\,\partial_3(\widehat{E}_1\varphi^{\varepsilon} )=0
 \qquad&\mbox{on}\ \omega, \label{82'}
\\
(\mathcal{U}^{\varepsilon},W^{\varepsilon},\varphi^{\varepsilon} )=0\qquad &\mbox{for}\ t<0.\label{83g'}
\end{align}
\end{subequations}
In this section we prove a uniform in $\varepsilon$ a priori estimate of smooth solutions of \eqref{77'}.

\begin{theorem}\label{lem1}
Let the basic state \eqref{21} satisfies assumptions \eqref{22}--\eqref{27} and \eqref{41} for all times. There exist ${\varepsilon}_0>0,\,\gamma_0\ge1$ such that if  $0<{\varepsilon}<{\varepsilon}_0$ and $\gamma\ge\gamma_0$ then all sufficiently smooth solutions $(\mathcal{U}^{\varepsilon},W^{\varepsilon},\varphi^{\varepsilon})$ of problem \eqref{77'} obey the estimate
\begin{multline}\label{54'}
\gamma \left(\|\mathcal{U}^{\varepsilon}_\gamma\|^2_{H^{1}_{tan,\gamma}(Q^+)}+\|W^{\varepsilon}_\gamma\|^2_{H^{1}_\gamma(Q^-)}
+\|(q^{\varepsilon}_\gamma,u_{1\gamma}^{\varepsilon},h_{1\gamma}^{\varepsilon})|_{\omega}\|^2_{H^{1/2}_\gamma(\omega)}
+
\|W^{\varepsilon}_{\gamma}|_{\omega}\|^2_{H^{1/2}_\gamma(\omega)}\right)
\\
+\gamma^2\|\varphi^{\varepsilon}_\gamma\|^2_{H^1_\gamma(\omega)}
\leq
\frac{C}{\gamma}\|\mathcal{F}_\gamma\|^2_{H^{1}_{tan,\gamma}(Q^+)} ,\end{multline}
where we have set $\mathcal{U}^{\varepsilon}_\gamma=e^{-\gamma t}\,\mathcal{U}^{\varepsilon}, W^{\varepsilon}_\gamma=e^{-\gamma t}\, W^{\varepsilon}, \varphi^{\varepsilon}_\gamma= e^{-\gamma t}\, \varphi^{\varepsilon}$
and so on, and where $C=C(K,\delta)>0$ is a constant independent of the data $\mathcal{F}$ and the parameters $\varepsilon,\gamma$.
\end{theorem}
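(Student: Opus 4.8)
The plan is to prove \eqref{54'} by a weighted energy method, working throughout with the exponentially weighted unknowns $\mathcal{U}^{\varepsilon}_{\gamma}=e^{-\gamma t}\mathcal{U}^{\varepsilon}$, $W^{\varepsilon}_{\gamma}=e^{-\gamma t}W^{\varepsilon}$, $\varphi^{\varepsilon}_{\gamma}=e^{-\gamma t}\varphi^{\varepsilon}$, $\mathcal{F}_{\gamma}=e^{-\gamma t}\mathcal{F}$, which solve the same system \eqref{77'} with $\partial_{t}$ replaced by $\partial_{t}+\gamma$, so that $\gamma$ produces a coercive zeroth-order term. First I would establish the plain $L^{2}$ identity by multiplying \eqref{77'a} scalarly by $\mathcal{U}^{\varepsilon}_{\gamma}$ and integrating over $Q^{+}$, multiplying \eqref{78'} by $W^{\varepsilon}_{\gamma}$ and integrating over $Q^{-}$, and integrating by parts. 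Using $\widehat{\mathcal{A}}_{0}>0$, $B_{0}>0$, the symmetry of all coefficient matrices, the bound \eqref{22} on the basic state and Young's inequality, the interior terms produce $\gamma\big(\|\mathcal{U}^{\varepsilon}_{\gamma}\|^{2}_{L^{2}(Q^{+})}+\|W^{\varepsilon}_{\gamma}\|^{2}_{L^{2}(Q^{-})}\big)$ up to zeroth-order terms absorbable for $\gamma$ large, the right-hand side contributes $\gamma^{-1}\|\mathcal{F}_{\gamma}\|^{2}_{L^{2}(Q^{+})}$, and the only boundary contributions on $\omega$ are $\int_{\omega}(\mathcal{E}_{12}\mathcal{U}^{\varepsilon}_{\gamma},\mathcal{U}^{\varepsilon}_{\gamma})\,d\omega=2\int_{\omega}q^{\varepsilon}_{\gamma}u^{\varepsilon}_{1\gamma}\,d\omega$ from the plasma side (because $\widehat{\mathcal{A}}_{1}=0$ on $\omega$, see \eqref{a10}) and $\int_{\omega}(B_{1}^{\varepsilon}W^{\varepsilon}_{\gamma},W^{\varepsilon}_{\gamma})\,d\omega=2\varepsilon^{-1}\int_{\omega}\big(\mathfrak{H}^{\varepsilon}_{3\gamma}\mathfrak{E}^{\varepsilon}_{2\gamma}-\mathfrak{H}^{\varepsilon}_{2\gamma}\mathfrak{E}^{\varepsilon}_{3\gamma}\big)\,d\omega$ from the vacuum side.

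The core of the argument is the treatment of these two boundary integrals by means of the boundary conditions \eqref{79'}--\eqref{82'}: one eliminates $q^{\varepsilon}_{\gamma}$ through \eqref{80'}, writes $\mathfrak{E}^{\varepsilon}_{2\gamma},\mathfrak{E}^{\varepsilon}_{3\gamma}$ through \eqref{81'}--\eqref{82'} as $\varepsilon$ times tangential derivatives of $\widehat{\mathcal{H}}_{j}\varphi^{\varepsilon}_{\gamma}$ and $\widehat{E}_{1}\varphi^{\varepsilon}_{\gamma}$, and replaces $u^{\varepsilon}_{1\gamma}$ through the transport equation \eqref{79'} by $(\partial_{t}+\gamma)\varphi^{\varepsilon}_{\gamma}$ plus lower-order terms. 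The decisive point is that the singular factor $\varepsilon^{-1}$ in the vacuum boundary integral is cancelled exactly by the $\varepsilon$ produced by \eqref{81'}--\eqref{82'}; hence, after an integration by parts in $(t,x_{2},x_{3})$ along $\omega$, the total boundary contribution becomes, up to $O(\varepsilon)$ remainders and terms absorbable into the left-hand side for $\gamma$ large, a \emph{fixed} (in $\varepsilon$) quadratic form in $\varphi^{\varepsilon}_{\gamma}$, $(\partial_{t},\partial_{2},\partial_{3})\varphi^{\varepsilon}_{\gamma}$ and the boundary traces of $\mathfrak{H}^{\varepsilon}_{2\gamma},\mathfrak{H}^{\varepsilon}_{3\gamma}$. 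The auxiliary coefficients $\widehat{E}_{j}$, left unspecified up to now, are to be chosen precisely at this stage so that, after completing squares, this form dominates $c\,\gamma^{2}\|\varphi^{\varepsilon}_{\gamma}\|^{2}_{H^{1}_{\gamma}(\omega)}$ with $c>0$, up to absorbable terms; the possibility of such a choice — equivalently, the nondegeneracy of a suitable matrix built from the tangential parts of $\widehat{H}$ and $\widehat{\mathcal{H}}$ on $\omega$ — is exactly the stability condition \eqref{41}, and this is the only place where the lower bound $|\widehat{H}\times\widehat{\mathcal{H}}|\ge\delta$ is used.

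It remains to close and upgrade the estimate. The control of $\gamma^{2}\|\varphi^{\varepsilon}_{\gamma}\|^{2}_{H^{1}_{\gamma}(\omega)}$ is recovered independently from \eqref{79'} and its tangential derivatives, which express $(\partial_{t}+\gamma)\varphi^{\varepsilon}_{\gamma}$ in terms of $u^{\varepsilon}_{1\gamma}|_{\omega}$ plus lower-order terms and thereby bound $\varphi^{\varepsilon}_{\gamma}$ by the noncharacteristic trace of $\mathcal{U}^{\varepsilon}_{\gamma}$. The $H^{1/2}_{\gamma}(\omega)$ traces entering \eqref{54'} follow from the standard trace estimate for the noncharacteristic components ($q^{\varepsilon}_{\gamma},u^{\varepsilon}_{1\gamma}$ for \eqref{77'a}, whose boundary matrix $-\mathcal{E}_{12}$ has invertible part of rank $2$; and $\mathfrak{H}^{\varepsilon}_{2\gamma},\mathfrak{H}^{\varepsilon}_{3\gamma},\mathfrak{E}^{\varepsilon}_{2\gamma},\mathfrak{E}^{\varepsilon}_{3\gamma}$ for \eqref{78'}, whose boundary matrix $B_{1}^{\varepsilon}$ has invertible part of rank $4$), together with the divergence constraints \eqref{84}--\eqref{85}, which express the $x_{1}$-derivatives of the remaining components $h^{\varepsilon}_{1\gamma}$ and $\mathfrak{H}^{\varepsilon}_{1\gamma},\mathfrak{E}^{\varepsilon}_{1\gamma}$ through tangential derivatives of the already-controlled ones, so that these components actually lie in the full $H^{1}$ spaces and have $H^{1/2}$ traces. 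To pass from the $L^{2}$ to the conormal $H^{1}$ level one applies the conormal fields $Z_{0}=\partial_{t}$, $Z_{1}=\sigma(x_{1})\partial_{1}$, $Z_{2}=\partial_{2}$, $Z_{3}=\partial_{3}$ to the interior equations and $\partial_{t},\partial_{2},\partial_{3}$ to the boundary conditions, repeats the preceding steps, and absorbs the resulting first-order conormal commutators — harmless precisely because $\widehat{\mathcal{A}}_{1}$ vanishes on $\omega$ and $\sigma$ is tailored to this — into the left-hand side by further enlarging $\gamma_{0}$; the genuine $H^{1}_{\gamma}(Q^{-})$ norm of $W^{\varepsilon}_{\gamma}$ is then obtained by recovering $\partial_{1}$ of its noncharacteristic components from \eqref{78'} (which costs only a bounded factor $\varepsilon$) and $\partial_{1}$ of $\mathfrak{H}^{\varepsilon}_{1},\mathfrak{E}^{\varepsilon}_{1}$ from \eqref{85}. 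The main obstacle is unmistakably the second step: isolating the correct fixed boundary quadratic form after the $\varepsilon^{-1}\!\cdot\!\varepsilon$ cancellation, carrying out the completion of squares with the right choice of $\widehat{E}_{j}$, and verifying that its positivity is governed exactly by the non-collinearity condition \eqref{41}. This is the energy-method counterpart of a Kreiss--Lopatinskii analysis for the plasma--vacuum interface, and it is both the most delicate and the most problem-specific ingredient of the proof.
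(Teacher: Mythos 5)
The broad architecture (weighted energy method on the coupled plasma/vacuum hyperbolic system, cancellation of the singular $\varepsilon^{-1}$ boundary term against the factor $\varepsilon$ in the regularized conditions \eqref{81'}--\eqref{82'}, then upgrading to the conormal $H^1$ level) is in the right spirit, but there are three substantive errors in the core step, and they would make your program break down.

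First, you integrate the \emph{raw} transformed Maxwell system \eqref{78'} against $W^{\varepsilon}_{\gamma}$, so your vacuum boundary term is exactly $\varepsilon^{-1}\bigl(\mathfrak{H}^{\varepsilon}_{3}\mathfrak{E}^{\varepsilon}_{2}-\mathfrak{H}^{\varepsilon}_{2}\mathfrak{E}^{\varepsilon}_{3}\bigr)$. The paper instead uses the secondary symmetrization \eqref{50"} with the $\vec\nu$-dependent matrices $M_j^{\varepsilon}$; the boundary form then picks up the crucial extra contributions $(\hat v_2\mathfrak{H}^{\varepsilon}_2+\hat v_3\mathfrak{H}^{\varepsilon}_3)\mathcal{H}^{\varepsilon}_N+(\hat v_2\mathfrak{E}^{\varepsilon}_2+\hat v_3\mathfrak{E}^{\varepsilon}_3)E^{\varepsilon}_N$ appearing in \eqref{A}. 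These are not cosmetic: after plugging in the boundary conditions, the coefficients of the dangerous products $(\partial_j\varphi^{\varepsilon})\cdot(\text{traces of }W^{\varepsilon})$ factor precisely through $\widehat{E}_1+\hat v_2\widehat{\mathcal{H}}_3-\hat v_3\widehat{\mathcal{H}}_2$ and $\widehat{E}_{\tau_2},\widehat{E}_{\tau_3}$ (see \eqref{AA}), and the choice $\widehat{E}=\widehat{\mathcal{H}}\times\vec\nu$ with $\vec\nu$ as in \eqref{nu} kills them, reducing the boundary form to $\varphi^{\varepsilon}$ times zeroth-order trace quantities (see \eqref{AAA}). If one drops the $\vec\nu$-terms, the analogous coefficient of $\partial_2\varphi^{\varepsilon}$ is $-\mathfrak{H}^{\varepsilon}_3\widehat{E}_1-\hat v_2(\widehat{\mathcal{H}}_2\mathfrak{H}^{\varepsilon}_2+\widehat{\mathcal{H}}_3\mathfrak{H}^{\varepsilon}_3)$, which is not annihilated by \emph{any} choice of $\widehat{E}_1$; the residual $\nabla\varphi^{\varepsilon}\cdot W^{\varepsilon}|_{\omega}$ cross terms cannot be absorbed, because the traces of $\mathfrak{H}^{\varepsilon}_2,\mathfrak{H}^{\varepsilon}_3$ are only controlled with a factor $\gamma^{-1}$, not $\gamma$, in front. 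So omitting the secondary symmetrization is not a simplification; it is a genuine gap.

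Second, your strategy at the boundary is to ``complete squares'' and obtain a fixed positive quadratic form dominating $c\gamma^2\|\varphi^{\varepsilon}_{\gamma}\|^2_{H^1_{\gamma}(\omega)}$. This is not what happens, and cannot happen: the boundary form also involves the traces $\mathfrak{H}^{\varepsilon}_2,\mathfrak{H}^{\varepsilon}_3$, which are not slaved to $\nabla\varphi^{\varepsilon}$ (only a single combination is constrained via $\mathfrak{h}^{\varepsilon}_1$), so the form cannot be signed in $(\varphi^{\varepsilon},\nabla\varphi^{\varepsilon})$. The paper is explicit that the boundary conditions are maximally nonnegative \emph{but not strictly dissipative}. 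The mechanism is different: after the cancellations, \eqref{AAA} consists solely of $\varphi^{\varepsilon}\times(\text{trace quantities})$ with no derivative of $\varphi^{\varepsilon}$, which is estimated by Cauchy--Schwarz with weight $\gamma$ into $\gamma\|\varphi^{\varepsilon}_{\gamma}\|^2_{L^2(\omega)}+\gamma^{-1}(\text{traces}^2)$; the first piece is absorbed using the transport estimate \eqref{phi}, and the traces are handled by the interpolation Lemma~\ref{interpol}.

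Third, you assign the stability condition \eqref{41} the role of making your boundary form nondegenerate. In fact \eqref{41} plays no role in the boundary quadratic form. It is used to invert the linear system formed by the transport equation \eqref{79'} and the two constraints \eqref{86}, \eqref{87} for the three unknowns $\partial_t\varphi^{\varepsilon},\partial_2\varphi^{\varepsilon},\partial_3\varphi^{\varepsilon}$, yielding \eqref{12a} and hence \eqref{gradphi}, i.e.\ the $L^2(\omega)$ control of $\nabla_{t,x'}\varphi^{\varepsilon}_{\gamma}$ in terms of the traces of $(q^{\varepsilon},u_1^{\varepsilon},h_1^{\varepsilon})$ and $W^{\varepsilon}$ and of $\gamma\|\varphi^{\varepsilon}_{\gamma}\|_{L^2(\omega)}$. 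This is precisely how $\gamma^2\|\varphi^{\varepsilon}_{\gamma}\|^2_{H^1_{\gamma}(\omega)}$ is recovered. Relatedly, your suggestion to get $\gamma^2\|\varphi^{\varepsilon}_{\gamma}\|^2_{H^1_{\gamma}(\omega)}$ from \eqref{79'} and its tangential derivatives alone does not close: differentiating \eqref{79'} tangentially produces $\partial_\ell u_1^{\varepsilon}|_{\omega}$, which is a full derivative of a trace and hence only controlled by $\|u_1^{\varepsilon}\|_{H^1_{\gamma}(\omega)}$, not by the available $\|u_1^{\varepsilon}\|_{H^{1/2}_{\gamma}(\omega)}$. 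The extra constraints \eqref{86}, \eqref{87} and the invertibility from \eqref{41} are exactly what replaces the missing half derivative. The remaining pieces of your sketch (normal derivative recovery from the boundary matrices and the divergence constraints, the trace interpolation, the conormal upgrade) match the paper's Section~\ref{basic} in spirit, but the points above are not cosmetic and need to be corrected for the argument to go through.
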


Passing to the limit $\varepsilon \to 0$ in this estimate will give the a priori estimate \eqref{54}.

Since problem \eqref{77'} looks similar to a corresponding one in relativistic MHD \cite{trakhinin10a},
for the deduction of estimate \eqref{54'} we use the same ideas as in \cite{trakhinin10a}. On the one hand, we even have an advantage, in comparison with the problem in \cite{trakhinin10a}, because the coefficients $\widehat{E}_j$ in \eqref{78'}, \eqref{80'}--\eqref{82'} are still arbitrary functions whose choice will be crucial to make boundary conditions dissipative. On the other hand, we should be more careful with lower-order terms than in \cite{trakhinin10a}, because we must avoid the appearance of terms with $\varepsilon^{-1}$ (otherwise, our estimate will not be uniform in $\varepsilon$). Also for this reason we are using the variables $(\mathcal{U}^{\varepsilon}, W^{\varepsilon})$ rather than $(U^{\varepsilon},V^{\varepsilon})$.

For the proof of \eqref{54'} we will need a secondary symmetrization of the transformed Maxwell equations in vacuum \eqref{35"}, \eqref{94"}.

\subsection{ A secondary symmetrization}
In order to show how to get the secondary symmetrization, for the sake of simplicity we consider first a planar unperturbed interface, i.e. the case $\hat{\varphi}\equiv 0$. For this case \eqref{35"}, \eqref{94"} become
\begin{equation}
\partial_t V^{\varepsilon} +\sum_{j=1}^3B_k^{\varepsilon}\partial_kV^{\varepsilon}=0,
\label{13'}
\end{equation}
\begin{equation}
{\rm div}\,\mathcal{H}^{\varepsilon}=0,\quad {\rm div}\,E^{\varepsilon}=0.
\label{14}
\end{equation}
We write for system \eqref{13'} the following secondary symmetrization (for a similar secondary symmetrization of the Maxwell equations in vacuum see \cite{trakhinin10a}):
\begin{equation}
\mathfrak{B}_0^{\varepsilon}\partial_tV^{\varepsilon}  +\sum_{j=1}^3\mathfrak{B}_0^{\varepsilon}B_j^{\varepsilon}\partial_jV^{\varepsilon} +R_1{\rm div}\,\mathcal{H}^{\varepsilon}
+R_2{\rm div}\, E^{\varepsilon}
 =\mathfrak{B}_0^{\varepsilon}\partial_tV^{\varepsilon} +\sum_{j=1}^3\mathfrak{B}_j^{\varepsilon}\partial_jV^{\varepsilon}=0,
\label{20'}
\end{equation}
where
\begin{equation}
\begin{array}{ll}\label{B0e}
\mathfrak{B}_0^{\varepsilon}=\left(\begin{array}{cccccc}
1 & 0 & 0& 0 & {\varepsilon}\nu_3 & -{\varepsilon}\nu_2 \\
0 & 1 & 0& -{\varepsilon}\nu_3 & 0 & {\varepsilon}\nu_1 \\
0 & 0 & 1& {\varepsilon}\nu_2 & -{\varepsilon}\nu_1 & 0 \\
0 & -{\varepsilon}\nu_3 & {\varepsilon}\nu_2& 1 & 0 & 0 \\
{\varepsilon}\nu_3 & 0 & -{\varepsilon}\nu_1& 0 & 1 & 0 \\
-{\varepsilon}\nu_2 & {\varepsilon}\nu_1 & 0& 0 & 0 & 1
\end{array} \right),
\end{array}
\end{equation}
\[
\mathfrak{B}_1^{\varepsilon}=
\left(\begin{array}{cccccc}
\nu_1 & \nu_2 & \nu_3& 0 & 0 & 0 \\
\nu_2 & -\nu_1 & 0& 0 & 0 & -\varepsilon^{-1} \\
\nu_3 & 0 & -\nu_1& 0 & \varepsilon^{-1} & 0 \\
0 & 0 & 0& \nu_1 & \nu_2 & \nu_3 \\
0 & 0 & \varepsilon^{-1}& \nu_2 & -\nu_1 & 0 \\
0 & -\varepsilon^{-1} & 0& \nu_3 & 0 & -\nu_1
\end{array} \right),
\quad
\mathfrak{B}_2^{\varepsilon}=
\left(\begin{array}{cccccc}
-\nu_2 & \nu_1 & 0& 0 & 0 & \varepsilon^{-1} \\
\nu_1 & \nu_2 & \nu_3& 0 & 0 & 0 \\
0 & \nu_3 & -\nu_2& -\varepsilon^{-1} & 0 & 0 \\
0 & 0 & -\varepsilon^{-1}& -\nu_2 & \nu_1 & 0 \\
0 & 0 & 0& \nu_1 & \nu_2 & \nu_3 \\
\varepsilon^{-1} & 0 & 0& 0 & \nu_3 & -\nu_2
\end{array} \right),
\]
\[
\mathfrak{B}_3^{\varepsilon}=
\left(\begin{array}{cccccc}
-\nu_3 & 0 & \nu_1& 0 & -\varepsilon^{-1} & 0 \\
0 & -\nu_3 & \nu_2& \varepsilon^{-1} & 0 & 0 \\
\nu_1 & \nu_2 & \nu_3& 0 & 0 & 0 \\
0 & \varepsilon^{-1} & 0& -\nu_3 & 0 & \nu_1 \\
-\varepsilon^{-1} & 0 & 0& 0 & -\nu_3 & \nu_2 \\
0 & 0 & 0& \nu_1 & \nu_2 & \nu_3
\end{array} \right),
\quad
R_1=\left(\begin{array}{c} \nu_1 \\
\nu_2 \\
\nu_3 \\
0 \\
0 \\
0
\end{array} \right),\quad R_2=\left(\begin{array}{c}
0 \\
0 \\
0 \\
\nu_1 \\
\nu_2 \\
\nu_3
\end{array} \right).
\]
The arbitrary functions $\nu_i(t,x)$ will be chosen in appropriate way later on.
 It may be useful to notice that system \eqref{20'} can also be written as
 \begin{equation}
\begin{array}{ll}\label{secsym}
\displaystyle
(\partial_t \mathcal{H}^{\varepsilon}+\frac{1}{\varepsilon}\nabla\times E^{\varepsilon}) - \vec\nu\times
(\varepsilon\partial_t E^{\varepsilon}-\nabla\times \mathcal{H}^{\varepsilon})+
\vec\nu\,{\rm div}\,\mathcal{H}^{\varepsilon}=0,
\\
\\
\displaystyle
(\partial_t E^{\varepsilon}-\frac{1}{\varepsilon}\nabla\times \mathcal{H}^{\varepsilon}) + \vec\nu\times
(\varepsilon\partial_t \mathcal{H}^{\varepsilon}+\nabla\times E^{\varepsilon})+
\vec\nu\, {\rm div}\,E^{\varepsilon}=0,
\end{array}
\end{equation}
with the vector-function $\vec\nu = (\nu_1, \nu_2, \nu_3)$. The symmetric system \eqref{20'} (or \eqref{secsym}) is hyperbolic if $\mathfrak{B}_0^{\varepsilon}>0$, i.e. for
\begin{equation}
\begin{array}{ll}\label{99}
{\varepsilon}|\vec\nu |<1.
\end{array}
\end{equation}
The last inequality is satisfied for any given $\nu$ and small ${\varepsilon}$. We compute\footnote{The manual computation of the determinants is definitely too long. Here we used a free program for symbolic calculus, with the help of PS's son Martino.}
$$
\mbox{det}(\mathfrak{B}_1^{\varepsilon})=\nu_1^2\left(|\vec\nu|^2-1/\epsilon^2\right)^2.
$$
Therefore the boundary is noncharacteristic for system \eqref{20'} (or \eqref{secsym}) provided \eqref{99} and $\nu_1\not=0$ hold.


Consider now a nonplanar unperturbed interface, i.e., the general case when $\hat{\varphi}$ is not identically zero.
Similarly to \eqref{20'}, from \eqref{newmaxwell}, \eqref{94"} we get the secondary symmetrization
\[
K\mathfrak{B}_0^{\varepsilon}K^{-1}\left({B}_0\partial_tW^{\varepsilon}+\sum_{j=1}^3B_j^{\varepsilon}\partial_jW^{\varepsilon} +{B}_4W^{\varepsilon}\right)+
\frac{1}{\partial_1\widehat{\Phi}_1}K\Big(R_1{\rm div}\,\mathfrak{h}^{\varepsilon}
+R_2{\rm div}\,\mathfrak{e}^{\varepsilon}\Big)=0.
\]
\\
We write this system as
\begin{equation}
M_0^{\varepsilon}\partial_tW^{\varepsilon} +\sum_{j=1}^3M_j^{\varepsilon}\partial_jW^{\varepsilon}+M_4^{\varepsilon}W^{\varepsilon}=0,
\label{50"}
\end{equation}
where
\begin{equation}
\begin{array}{ll}\label{defmatrices}
\ds M_0^{\varepsilon}=\frac{1}{\partial_1\widehat{\Phi}_1}\,K\mathfrak{B}_0^{\varepsilon}K^{\textsf{T}}>0,
\quad
\ds M_j^{\varepsilon}=\frac{1}{\partial_1\widehat{\Phi}_1}\,K\mathfrak{B}_j^{\varepsilon}K^{\textsf{T}} \quad (j=2,3),
\\
\ds M_1^{\varepsilon}=\frac{1}{\partial_1\widehat{\Phi}_1}\,K
\widetilde{\mathfrak{B}}_1^{\varepsilon}K^{\textsf{T}}, \quad
\widetilde{\mathfrak{B}}_1^{\varepsilon}=\frac{1}{\partial_1\widehat{\Phi}_1}\Bigl(
\mathfrak{B}_1^{\varepsilon}-\sum_{k=2}^3\mathfrak{B}_k^{\varepsilon}\partial_k\widehat{\Psi} \Bigr),
\\
\ds {M}_4^{\varepsilon}=
K\left({\mathfrak{B}}_0^{\varepsilon}\partial_t+\widetilde{\mathfrak{B}}_1^{\varepsilon}\partial_1+\mathfrak{B}_2^{\varepsilon} \partial_2+\mathfrak{B}_3^{\varepsilon} \partial_3+ \mathfrak{B}_0^{\varepsilon}{B}_4\right)
\left( \frac{1}{\partial_1\widehat{\Phi}_1}\,K^T \right).

\end{array}
\end{equation}
System \eqref{50"} is symmetric hyperbolic provided that \eqref{99} holds. We compute
\begin{equation}
\begin{array}{ll}\label{determinant}
\mbox{det}({M}_1^{\varepsilon})=\left(1+(\ddue\hat\varphi)^2+(\dtre\hat\varphi)^2\right)^2\left(\nu_1-\nu_2\ddue\hat\varphi -\nu_3\dtre\hat\varphi \right)^2\left(|\vec\nu|^2-1/\epsilon^2\right)^2 ,
\end{array}
\end{equation}
and so the boundary is noncharacteristic for system \eqref{50"} if and only if \eqref{99} holds and $\nu_1\not=\nu_2\ddue\hat\varphi +\nu_3\dtre\hat\varphi $.
System \eqref{50"} originates from a linear combination of equations \eqref{35"} similar to \eqref{secsym}, namely
from
\begin{equation}
\begin{array}{ll}\label{secsym2}
\displaystyle
(\partial_t \mathfrak{h}^{\varepsilon}+\frac{1}{\varepsilon}\nabla\times \mathfrak{E}^{\varepsilon}) -
\hat\eta\left(\vec\nu\times\hat\eta^{-1}(\varepsilon\partial_t \mathfrak{e}^{\varepsilon}-\nabla\times \mathfrak{H}^{\varepsilon})\right)+
\frac{\hat\eta\,\vec\nu}{\partial_1\widehat{\Phi}_1}\,{\rm div}\,\mathfrak{h}^{\varepsilon}=0,
\\
\\
\displaystyle
(\partial_t \mathfrak{e}^{\varepsilon}-\frac{1}{\varepsilon}\nabla\times \mathfrak{H}^{\varepsilon}) +
\hat\eta\left(\vec\nu\times\hat\eta^{-1}(\varepsilon\partial_t \mathfrak{h}^{\varepsilon}+\nabla\times \mathfrak{E}^{\varepsilon})\right)+
\frac{\hat\eta\,\vec\nu}{\partial_1\widehat{\Phi}_1}\, {\rm div}\,\mathfrak{e}^{\varepsilon}=0.
\end{array}
\end{equation}
We need to know which is the behavior of the above matrices in \eqref{defmatrices} w.r.t. $\varepsilon$ as $\varepsilon \rightarrow 0$.
In view of this, let us denote a generic matrix which is bounded w.r.t. $\varepsilon$ by $O(1)$.
Looking at \eqref{secsym2} we immediately find
\begin{equation}
\begin{array}{ll}\label{ordine}
\ds M_0^{\varepsilon}=O(1),
\quad
M_j^{\varepsilon}={B}_j^{\varepsilon}+O(1)
\quad (j=1, 2,3),
\quad M_4^{\varepsilon}=O(1).
\end{array}
\end{equation}
As the matrices $M_0^{\varepsilon}$ and ${M}_4^{\varepsilon}$ do not contain the multiplier $\varepsilon^{-1}$, their norms are bounded as $\varepsilon \rightarrow 0$.
Recalling that the matrices $B_j^{\varepsilon}$ are constant, we deduce as well that all the possible derivatives (with respect to $t$ and $x_j$) of the matrices $M_j^{\varepsilon}$ have bounded norms as $\varepsilon \rightarrow 0$.

\subsection{Proof of Theorem \ref{lem1}}

For the proof of our basic a priori estimate \eqref{54'} we  will apply the energy method to the symmetric hyperbolic systems \eqref{77'a} and \eqref{50"}.
In the sequel $\gamma_0\ge1$ denotes a generic constant sufficiently large which may increase from formula to formula, and $C$ is a generic constant that may change from line to line.

First of all we provide some preparatory estimates.
In particular, to estimate the weighted conormal derivative  $Z_1=\sigma\partial_1$ of $\mathcal{U}^{\varepsilon}$ (recall the definition \eqref{normaconormale} of the $\gamma$-dependent norm of $H^1_{tan,\gamma}$) we do not need any boundary condition because the weight $\sigma$ vanishes on $\omega$. Applying to system \eqref{77'a} the operator $Z_1$ and using standard arguments of the energy method\footnote{
We multiply $Z_1$\eqref{77'a} by $e^{-\gamma t}\,Z_1\mathcal{U}^{\varepsilon}_\gamma$ and integrate by parts over $Q^+$, then we use the Cauchy-Schwarz inequality.
}, yields the inequality
\begin{equation}
\gamma \|Z_1\mathcal{U}_\gamma^{\varepsilon}\|^2_{L^2(Q^+)} \leq \frac{C}{\gamma}\left\{ \|\mathcal{F}_\gamma\|^2_{H^1_{tan,\gamma}(Q^+)}+\|\mathcal{U}_\gamma^{\varepsilon}\|^2_{H^1_{tan,\gamma}(Q^+)}+\|\mathcal{E}_{12}\mathcal{U}_\gamma^{\varepsilon}\|^2_{L^2(Q^+)} \right\}
,
\label{126}
\end{equation}
for $\gamma\ge \gamma_0$. On the other hand, directly from the equation \eqref{77'a} we have
\begin{equation}
\begin{array}{ll}\label{126'}
\|\mathcal{E}_{12}\mathcal{U}_\gamma^{\varepsilon}\|^2_{L^2(Q^+)}\le C
\left\{ \|\mathcal{F}_\gamma\|^2_{L^2(Q^+)}+\|\mathcal{U}_\gamma^{\varepsilon}\|^2_{H^1_{tan,\gamma}(Q^+)} \right\}
,
\end{array}
\end{equation}
where $C$ is independent of ${\varepsilon},\gamma$.
Thus from \eqref{126}, \eqref{126'} we get
\begin{equation}
\gamma \|Z_1\mathcal{U}_\gamma^{\varepsilon}\|^2_{L^2(Q^+)} \leq \frac{C}{\gamma}\left\{ \|\mathcal{F}_\gamma\|^2_{H^1_{tan,\gamma}(Q^+)}+\|\mathcal{U}_\gamma^{\varepsilon}\|^2_{H^1_{tan,\gamma}(Q^+)} \right\}
, \quad \gamma\ge \gamma_0,
\label{126''}
\end{equation}
where $C$ is independent of ${\varepsilon},\gamma$.
Furthermore, using the special structure of the boundary matrix in \eqref{77'a} (see \eqref{a10}) and the divergence constraint
\eqref{84}, we may estimate the normal derivative of the noncharacteristic part $\mathcal{U}_{n\gamma}^{\varepsilon}=e^{-\gamma t}(q^{\varepsilon},u_1^{\varepsilon},h_1^{\varepsilon})$ of the ``plasma'' unknown $\mathcal{U}_{\gamma}^{\varepsilon}$:
\begin{equation}
\|\partial_1 \mathcal{U}_{n\gamma}^{\varepsilon} \|^2_{L^2(Q^+)} \leq  C\left\{ \|\mathcal{F}_\gamma\|^2_{L^2(Q^+)} +\|\mathcal{U}_\gamma^{\varepsilon}\|^2_{H^1_{tan,\gamma}(Q^+)} \right\},
\label{130}
\end{equation}
where $C$ is independent of ${\varepsilon},\gamma$.
In a similar way we wish to express the normal derivative of $W^{\varepsilon}$ through its tangential derivatives. Here it is convenient to use system \eqref{78'} rather than \eqref{50"}. We multiply \eqref{78'} by $\varepsilon$ and find from the obtained equation an explicit expression for the normal derivatives of $\mathfrak{H}_2^{\varepsilon},\mathfrak{H}_3^{\varepsilon},\mathfrak{E}^{\varepsilon}_2,\mathfrak{E}_3^{\varepsilon}$. An explicit expression for the normal derivatives of $\mathfrak{H}^{\varepsilon}_1,\mathfrak{E}_1^{\varepsilon}$ is found through the divergence constraints \eqref{85}.
Thus we can estimate the normal derivatives of all the components of $W^{\varepsilon}$ through its tangential derivatives:
\begin{equation}
\|\partial_1W_\gamma^{\varepsilon}\|^2_{L_2(Q^-)} \leq C
\left\{\gamma^2 \|W_\gamma^{\varepsilon} \|^2_{L_2(Q^-)}+\|\partial_tW_\gamma^{\varepsilon} \|^2_{L_2(Q^-)}+
\sum_{k=2}^3\|\partial_kW_\gamma^{\varepsilon} \|^2_{L_2(Q^-)} \right\}
,\label{127}
\end{equation}
where $C$ does not depend on $\varepsilon$  and $\gamma$, for all ${\varepsilon}\le{\varepsilon}_0$.

As for the front function $\varphi^\epsilon$ we easily obtain from \eqref{79'} the $L^2$ estimate
\begin{equation}
\gamma\|\varphi_{\gamma}^{\varepsilon}\|^2_{L^2(\omega)}
\\
 \leq \frac{C}{\gamma}
\|u_{1\gamma}^{\varepsilon}\|^2_{L^2(\omega)}, \quad   \gamma\ge \gamma_0,
\label{phi}
\end{equation}
where $C$ is independent of $\gamma$.
Furthermore, thanks to our basic assumption \eqref{41}\footnote{Under the conditions $\hat{H}_N=\hat{\mathcal{H}}_N=0$ one has
$|\hat{H}\times\hat{\mathcal{H}}|^2=(\hat{H}_2\hat{\mathcal{H}}_3-\hat{H}_3\hat{\mathcal{H}}_2)^2\langle\nabla'\hat\varphi\rangle^2$ on $\omega$,
where we have set $\langle\nabla'\hat\varphi\rangle:=(1+|\ddue\hat\varphi|^2+|\dtre\hat\varphi|^2)^{1/2}$.}
we can resolve \eqref{86}, \eqref{87} and \eqref{79'} for the space-time gradient $\nabla_{t,x'}\varphi_\gamma^{\varepsilon} =(\partial_t\varphi^{\varepsilon}_\gamma,\partial_2\varphi^{\varepsilon}_\gamma,\partial_3\varphi^{\varepsilon}_\gamma)$:
\begin{equation}
\nabla_{t,x'}\varphi^{\varepsilon}_{\gamma}=\hat{a}_1{h}^{\varepsilon}_{1\gamma}+\hat{a}_2{\mathfrak{h}}^{\varepsilon}_{1\gamma} +\hat{a}_3{u}^{\varepsilon}_{1\gamma}+\hat{a}_4\varphi^{\varepsilon}_\gamma+\gamma\hat{a}_5\varphi^{\varepsilon}_\gamma ,
\label{12a}
\end{equation}
where the vector-functions $\hat{a}_{\alpha}={a}_{\alpha}(\widehat{U}_{|\omega},\widehat{\mathcal{H}}_{|\omega})$ of coefficients can be easily written in explicit form. From \eqref{12a} we get
\begin{equation}
\begin{array}{ll}\label{gradphi}
\|\nabla_{t,x'}\varphi^{\varepsilon}_{\gamma}\|_{L^2(\omega)}\leq
C\left( \|\mathcal{U}_{n\gamma}^{\varepsilon}|_\omega\|_{L^2(\omega)}+\|W_{\gamma}^{\varepsilon}|_\omega \|_{L^2(\omega)}+ \gamma\|\varphi^{\varepsilon}_\gamma\|_{L^2(\omega)}
\right).
\end{array}
\end{equation}


Now we prove a $L^2$ energy estimate for $(\mathcal{U}^{\varepsilon},W^{\varepsilon})$. We multiply \eqref{77'a} by $e^{-\gamma t}\,\mathcal{U}^{\varepsilon}_\gamma$ and \eqref{50"} by $e^{-\gamma t}\,W^{\varepsilon}_\gamma$, integrate by parts over $Q^\pm$, then we use the Cauchy-Schwarz inequality.
We easily obtain
\begin{multline}
\gamma\int_{Q^+}(\widehat{A}_0{\mathcal{U}}^{\varepsilon}_\gamma,{\mathcal{U}}^{\varepsilon}_\gamma)\,dxdt+
\gamma\int_{Q^-}(M_0^{\varepsilon}W^{\varepsilon}_\gamma,W_\gamma^{\varepsilon})dxdt
+ \int_{\omega}\mathcal{A}^{\varepsilon}\,dx'dt 
\\
 \leq C
\left\{ \frac{1}{\gamma}\|\mathcal{F}_\gamma\|^2_{L^2(Q^+)}+\|\mathcal{U}_\gamma^{\varepsilon}\|^2_{L^2(Q^+)} +\|W_\gamma^{\varepsilon}\|^2_{L^2(Q^-)} \right\},
\label{19a0}
\end{multline}
where we have denoted
\[
\mathcal{A}^{\varepsilon}=-\frac{1}{2}({\mathcal{E}}_{12}{\mathcal{U}}^{\varepsilon}_{\gamma},{\mathcal{U}}^{\varepsilon}_{\gamma})|_{\omega}
+\frac{1}{2}(M_1^{\varepsilon}W^{\varepsilon}_{\gamma},W^{\varepsilon}_{\gamma})|_{\omega}.
\]
Thanks to the properties of the matrices $M_{\alpha}^{\varepsilon}$ ($\alpha =\overline{0,4}$) described in \eqref{ordine}, the constant $C$ in \eqref{19a0} is uniformly bounded in ${\varepsilon}$ and $\gamma$.
%
Let us calculate the quadratic form $\mathcal{A}^{\varepsilon}$
for the following choice of the functions $\nu_j$ in the secondary symmetrization\footnote{Notice that the choice \eqref{nu} makes the boundary characteristic, see \eqref{determinant}.}:
\begin{equation}
\nu_1=\hat{v}_2\partial_2\hat{\varphi}+\hat{v}_3\partial_3\hat{\varphi},\quad \nu_k=\hat{v}_k,\quad k=2,3.
\label{nu}
\end{equation}
After long calculations we get (for simplicity we drop the index $\gamma$)
\begin{equation}
\begin{array}{ll}

\mathcal{A}^{\varepsilon}
=-q^{\varepsilon}u_1^{\varepsilon}+\varepsilon^{-1}({\mathfrak{H}}_3^{\varepsilon}{\mathfrak{E}}_2^{\varepsilon}-{\mathfrak{H}}_2^{\varepsilon}{\mathfrak{E}}_3^{\varepsilon})
+(\hat v_2 {\mathfrak{H}}_2^{\varepsilon}+\hat v_3 {\mathfrak{H}}_3^{\varepsilon}){\mathcal{H}}_N^{\varepsilon}
+(\hat v_2 {\mathfrak{E}}_2^{\varepsilon}+\hat v_3 {\mathfrak{E}}_3^{\varepsilon}){E}_N^{\varepsilon},
\label{A}
\quad \mbox{on  }\omega.
\end{array}
\end{equation}
Now we insert the boundary conditions \eqref{87}, \eqref{79'}--\eqref{82'} in the quadratic form $\mathcal{A}^{\varepsilon}$, recalling also $\hat{\mathcal{H}}_{N}|_{\omega}=0$ and noticing that
$$ \hat{\mathfrak{e}}\cdot\mathfrak{E}^{\varepsilon}
= \widehat{E}_1{E}^{\varepsilon}_N+\widehat{E}_{\tau_2}{E}^{\varepsilon}_2+\widehat{E}_{\tau_3}{E}^{\varepsilon}_3
=\hat{\mathfrak{E}}\cdot\mathfrak{e}^{\varepsilon}.
$$
Again after long calculations we get
\begin{multline}\label{AA}
\mathcal{A}^{\varepsilon}=\bigl(\widehat{E}_1+\hat{v}_2\widehat{\mathcal{H}}_3-\hat{v}_3
\widehat{\mathcal{H}}_2\bigr)\bigl( \varepsilon{E}_N^{\varepsilon}\partial_t\varphi^{\varepsilon}+{\mathfrak{H}}^{\varepsilon}_{2}\partial_3\varphi^{\varepsilon}
-{\mathfrak{H}}^{\varepsilon}_{3}\partial_2\varphi^{\varepsilon}\bigr)
\\
+\bigl(\varepsilon \widehat{E}_{\tau_2}E_2^{\varepsilon}+\varepsilon \widehat{E}_{\tau_3}E_3^{\varepsilon} \bigr)\bigl(\partial_t\varphi^{\varepsilon} +\hat{v}_2\partial_2\varphi^{\varepsilon} +\hat{v}_3\partial_3\varphi^{\varepsilon} \bigr)
\\
+\varphi^{\varepsilon} \big\{-\gamma q^{\varepsilon}+ [ \partial_1\hat{q}]\,u_1^{\varepsilon}- \partial_1\hat{v}_N(q^{\varepsilon}+[ \partial_1\hat{q}]\varphi^{\varepsilon})
+ (\gamma \hat{\mathcal{H}}_3+\partial_t\hat{\mathcal{H}}_3 - \partial_2\hat{E}_1)(\mathfrak{H}^{\varepsilon}_{3} + \varepsilon\hat{v}_2 E_N^{\varepsilon})
\\
+ (\gamma \hat{\mathcal{H}}_2+\partial_t\hat{\mathcal{H}}_2 + \partial_3\hat{E}_1)(\mathfrak{H}^{\varepsilon}_{2} - \varepsilon\hat{v}_3 E_N^{\varepsilon})
+(\partial_2\hat{\mathcal{H}}_2 + \partial_3\hat{\mathcal{H}}_3 ) (\hat{v}_2\mathfrak{H}_{2}^{\varepsilon} + \hat{v}_3\mathfrak{H}_{3}^{\varepsilon}   )\big\}
\quad \mbox{on  }{\omega}
\, .
\end{multline}
Thanks to the multiplicative factor $\varepsilon$ in the boundary condition \eqref{81'}, \eqref{82'}, the critical term with the multiplier ${\varepsilon}^{-1}$ in \eqref{A} has been dropped out.
We make the following choice of the coefficients $\widehat{E}_j$ in the boundary conditions \eqref{80'}--\eqref{82'}:
\begin{equation*}
\widehat{E}= \widehat{\mathcal{H}}\times\vec\nu,
\label{}
\end{equation*}
where $\vec\nu$ is that of \eqref{nu}. For this choice
\begin{equation}\label{E}
\widehat{E}_1+\hat{v}_2\widehat{\mathcal{H}}_3-\hat{v}_3
\widehat{\mathcal{H}}_2=0,\quad \widehat{E}_{\tau_2}=0,\quad \widehat{E}_{\tau_3}=0,
\end{equation}
and this leaves us with
\begin{multline*}\label{}
\mathcal{A}^{\varepsilon}=
\varphi^{\varepsilon} \big\{ -\gamma (q^\varepsilon + [ \partial_1\hat{q}]\varphi^{\varepsilon})+ [ \partial_1\hat{q}]\,(u_1^{\varepsilon}+\varphi^{\varepsilon}\partial_1\hat{v}_N) + \partial_1\hat{v}_Nq^{\varepsilon}
\\
+ ( \gamma \hat{\mathcal{H}}_3 + \partial_t\hat{\mathcal{H}}_3 - \partial_2\hat{E}_1)(\mathfrak{H}^{\varepsilon}_{3} + \varepsilon\hat{v}_2 E_N^{\varepsilon})
+ ( \gamma \hat{\mathcal{H}}_2 + \partial_t\hat{\mathcal{H}}_2 + \partial_3\hat{E}_1)(\mathfrak{H}^{\varepsilon}_{2} - \varepsilon\hat{v}_3 E_N^{\varepsilon})
\\
+(\partial_2\hat{\mathcal{H}}_2 + \partial_3\hat{\mathcal{H}}_3 ) (\hat{v}_2\mathfrak{H}_{2}^{\varepsilon} + \hat{v}_3\mathfrak{H}_{3}^{\varepsilon}   )\big\}
\quad \mbox{on  }{\omega}
\, .
\end{multline*}
Then we write in more convenient form the terms with coefficient $\gamma$ substituting from \eqref{80'}
$$
-({q}^{\varepsilon}+ [ \partial_1\hat{q}]\varphi^{\varepsilon} ) +\hat{\mathcal{H}}_2\mathfrak{H}^{\varepsilon}_2+\hat{\mathcal{H}}_3\mathfrak{H}^{\varepsilon}_3
=  \varepsilon\,
\hat{\mathfrak{e}}\cdot\mathfrak{E}^{\varepsilon},
$$
and we notice that
$$
\hat{\mathfrak{e}}\cdot\mathfrak{E}^{\varepsilon}+(\hat{v}_2\hat{\mathcal{H}}_3 - \hat{v}_3\hat{\mathcal{H}}_2     )E_N^{\varepsilon}=\hat{\mathfrak{E}}\cdot\mathfrak{e}^{\varepsilon}+(\hat{v}_2\hat{\mathcal{H}}_3 - \hat{v}_3\hat{\mathcal{H}}_2     )\mathfrak{e}^{\varepsilon}_1=0, \quad \mbox{on  }{\omega},
$$
again by \eqref{E}. Thus we get
\begin{multline}\label{AAA}
\mathcal{A}^{\varepsilon}=
\varphi^{\varepsilon} \big\{  [ \partial_1\hat{q}]\,(u_1^{\varepsilon}+\varphi^{\varepsilon}\partial_1\hat{v}_N) + \partial_1\hat{v}_Nq^{\varepsilon}
\\
+ (  \partial_t\hat{\mathcal{H}}_3 - \partial_2\hat{E}_1)(\mathfrak{H}^{\varepsilon}_{3} + \varepsilon\hat{v}_2 E_N^{\varepsilon})
+ ( \partial_t\hat{\mathcal{H}}_2 + \partial_3\hat{E}_1)(\mathfrak{H}^{\varepsilon}_{2} - \varepsilon\hat{v}_3 E_N^{\varepsilon})\\
+(\partial_2\hat{\mathcal{H}}_2 + \partial_3\hat{\mathcal{H}}_3 ) (\hat{v}_2\mathfrak{H}_{2}^{\varepsilon} + \hat{v}_3\mathfrak{H}_{3}^{\varepsilon}   )\big\}
\quad \mbox{on  }{\omega}
\, .
\end{multline}
From \eqref{19a0}, \eqref{AAA} we obtain (we restore the index $\gamma$)
\begin{multline}
\gamma\left( \|\mathcal{U}_\gamma^{\varepsilon}\|^2_{L^2(Q^+)}+\|W_\gamma^{\varepsilon}\|^2_{L^2(Q^-)}\right) 
 \leq \frac{C}{\gamma}\left\{\|\mathcal{F}_\gamma\|^2_{L^2(Q^+)}
 + \|\mathcal{U}_{n\gamma}^{\varepsilon}|_\omega\|^2_{L^2(\omega)}+ \|W_{\gamma}^{\varepsilon}|_\omega\|^2_{L^2(\omega)}
 \right\}
 \\
 +C\left( \|\mathcal{U}_\gamma^{\varepsilon}\|^2_{L^2(Q^+)}+\|W_\gamma^{\varepsilon}\|^2_{L^2(Q^-)}\right)
 +  \gamma \|\varphi_{\gamma}^{\varepsilon}\|^2_{L^2(\omega)},\label{L2a}
\end{multline}
where $C$ is independent of ${\varepsilon},\gamma$. Thus if $\gamma_0$ is  large enough we obtain from \eqref{phi}, \eqref{L2a} the inequality
\begin{multline}
\gamma\left( \|\mathcal{U}_\gamma^{\varepsilon}\|^2_{L^2(Q^+)}+\|W_\gamma^{\varepsilon}\|^2_{L^2(Q^-)}\right) \\
 \leq \frac{C}{\gamma}\left\{\|\mathcal{F}_\gamma\|^2_{L^2(Q^+)}
 + \|\mathcal{U}_{n\gamma}^{\varepsilon}|_\omega\|^2_{L^2(\omega)}+\|W_{\gamma}^{\varepsilon}|_\omega\|^2_{L^2(\omega)}
 \right\}
,
 \quad  0<\varepsilon<\varepsilon_0,\; \gamma\ge \gamma_0,
\label{L2b}
\end{multline}
where $C$ is independent of ${\varepsilon},\gamma$.

Now we derive the a priori estimate of tangential derivatives. Differentiating systems \eqref{77'a} and \eqref{50"} with respect to $x_0=t$, $x_2$ or $x_3$, using standard arguments of the energy method, and applying \eqref{130}, \eqref{127}, gives the energy inequality
\begin{multline}
\gamma\int_{Q^+}(\widehat{A}_0Z_{\ell}U^{\varepsilon}_\gamma,Z_{\ell}U^{\varepsilon}_\gamma)\,dxdt+
\gamma\int_{Q^-}(M_0^{\varepsilon}Z_{\ell}W^{\varepsilon}_\gamma,Z_{\ell}W_\gamma^{\varepsilon})dxdt
+ \int_{\omega}\mathcal{A}^{\varepsilon}_{\ell}\,dx'dt  
\\
\leq \frac{C}{\gamma}
\left\{ \|\mathcal{F}_\gamma\|^2_{H^1_{tan,\gamma}(Q^+)}+\|\mathcal{U}_\gamma^{\varepsilon}\|^2_{H^1_{tan,\gamma}(Q^+)} +\|W_\gamma^{\varepsilon}\|^2_{H^1_{\gamma}(Q^-)} \right\},
\label{19a}
\end{multline}
where $\ell =0,2,3$, and where we have denoted
\begin{equation*}
\begin{array}{ll}\label{19b}
\ds \mathcal{A}^{\varepsilon}_{\ell}=-\frac{1}{2}(\mathcal{E}_{12}Z_{\ell}U^{\varepsilon}_{\gamma},Z_{\ell}
U^{\varepsilon}_{\gamma})|_{\omega}+\frac{1}{2}(M_1^{\varepsilon}Z_{\ell}W^{\varepsilon}_{\gamma},Z_{\ell}
W^{\varepsilon}_{\gamma})|_{\omega}\,.

\end{array}
\end{equation*}
Thanks to the properties of the matrices $M_{\alpha}^{\varepsilon}$ ($\alpha =\overline{0,4}$) described in \eqref{ordine}, the constant $C$ in \eqref{19a} is uniformly bounded in ${\varepsilon}$ and $\gamma$.
%
We repeat for $\ds \mathcal{A}^{\varepsilon}_{\ell}$ the calculations leading to \eqref{AAA} for $\ds \mathcal{A}^{\varepsilon}$. Clearly, for the same choices as in \eqref{nu} and \eqref{E} we obtain (for simplicity we drop again the index $\gamma$)
\begin{multline}
\mathcal{A}_{\ell}^{\varepsilon}=
Z_{\ell}\varphi^{\varepsilon} \big\{ [ \partial_1\hat{q}]\,(Z_{\ell}u_1^{\varepsilon}+Z_{\ell}\varphi^{\varepsilon}\partial_1\hat{v}_N) + \partial_1\hat{v}_NZ_{\ell}q^{\varepsilon}
\\
+ (  \partial_t\hat{\mathcal{H}}_3 - \partial_2\hat{E}_1)(Z_{\ell}\mathfrak{H}^{\varepsilon}_{3} + \varepsilon\hat{v}_2 Z_{\ell}E_N^{\varepsilon})
+ ( \partial_t\hat{\mathcal{H}}_2 + \partial_3\hat{E}_1)(Z_{\ell}\mathfrak{H}^{\varepsilon}_{2} - \varepsilon\hat{v}_3 Z_{\ell}E_N^{\varepsilon})
\\+(\partial_2\hat{\mathcal{H}}_2 + \partial_3\hat{\mathcal{H}}_3 ) (\hat{v}_2Z_{\ell}\mathfrak{H}_{2}^{\varepsilon} + \hat{v}_3Z_{\ell}\mathfrak{H}_{3}^{\varepsilon}   )\big\}
+{\rm l.o.t.}\,\! ,
\quad \mbox{on  }{\omega},
\label{Al}
\end{multline}
where l.o.t. is the sum of lower-order terms.
Using \eqref{12a} we reduce the above terms to those like
\[
\hat{c}\,h_1^{\varepsilon}Z_{\ell}u_1^{\varepsilon},\quad \hat{c}\, {h}_1^{\varepsilon}Z_{\ell}\varphi^{\varepsilon},
\quad
\hat{c}\, {h}_1^{\varepsilon}Z_{\ell}\mathfrak{H}_j^{\varepsilon},\quad \hat{c}\, {h}_1^{\varepsilon}Z_{\ell}\mathfrak{E}_j^{\varepsilon}
,\quad\dots
\quad \mbox{on  }{\omega}
,\]
terms as above with $\mathfrak{h}_1^{\varepsilon},u_1^{\varepsilon}$ instead of $h_1^{\varepsilon}$, or even ``better'' terms like
$$
\gamma\hat{c}\varphi^{\varepsilon} Z_{\ell}{u}_1^{\varepsilon},\quad
\gamma\hat{c}\varphi^{\varepsilon} Z_{\ell}\varphi^{\varepsilon}.$$ Here and below $\hat{c}$ is the common notation for a generic coefficient depending on the basic state \eqref{21}.
By integration by parts such ``better'' terms can be reduced to the above ones and terms of lower order.

The terms like $\hat{c}\,h_1^{\varepsilon}Z_{\ell} u^{\varepsilon}_{1|x_1=0}$ are estimated by passing to the volume integral and integrating by parts:
\begin{multline*}
\int_{\omega}\hat{c}\,h_1^{\varepsilon}Z_{\ell}u^{\varepsilon}_{1|x_1=0}\,{\rm d}x'\,{\rm d}t
=-\int_{Q^+}\partial_1\bigl(\tilde{c}h_1^{\varepsilon}Z_{\ell}u_1^{\varepsilon}\bigr){\rm d}x\,{\rm d}t \\
=\int_{Q^+}\Bigl\{(Z_{\ell}\tilde{c})h_1^{\varepsilon}(\partial_1u_1^{\varepsilon})+\tilde{c}(Z_{\ell}h_1^{\varepsilon})\partial_1u_1^{\varepsilon}  -(\partial_1\tilde{c})h_1^{\varepsilon}
Z_\ell u_1^{\varepsilon}
-\tilde{c}(\partial_1h_1^{\varepsilon})Z_{\ell}u_1^{\varepsilon}
\Bigr\}{\rm d}x\,{\rm d}t
,
\end{multline*}
where $\tilde{c}|_{x_1=0}=\hat{c}$. Estimating the right-hand side by the H\"older's inequality and \eqref{130} gives
\begin{equation}
\begin{array}{ll}\label{unacaso}
\ds\left|
\int_{\omega}\hat{c}\,h_1^{\varepsilon}Z_{\ell}u^{\varepsilon}_{1|x_1=0}\,{\rm d}x'\,{\rm d}t
\right| \leq
  C\left\{ \|\mathcal{F}_\gamma\|^2_{L^2(Q^+)} +\|\mathcal{U}_\gamma^{\varepsilon}\|^2_{H^1_{tan,\gamma}(Q^+)} \right\}.
\end{array}
\end{equation}
In the same way we estimate the other similar terms $  \hat{c}\, {h}_1^{\varepsilon}Z_{\ell}\mathfrak{H}_j^{\varepsilon}, \hat{c}\, {h}_1^{\varepsilon}Z_{\ell}\mathfrak{E}_j^{\varepsilon}
,$ etc. Notice that we only need to estimate normal derivatives either of components of $\mathcal{U}_{n\gamma}^{\varepsilon}$ or $W_{\gamma}^{\varepsilon}$. For terms like $\hat{c}\,{\mathfrak{h}}^{\varepsilon}_{1}Z_{\ell}u_1^{\varepsilon}, \hat{c}\,{\mathfrak{h}}^{\varepsilon}_{1}Z_{\ell}\mathfrak{E}_j^{\varepsilon}$, etc. we use \eqref{127} instead of \eqref{130}.

We treat the terms like $\hat{c}\,h_{1|x_1=0}^{\varepsilon}Z_{\ell} \varphi^{\varepsilon}$  by substituting \eqref{12a} again:
\begin{multline}\label{unacaso2}
\left|\int_{\omega}\hat{c}\,h_1^{\varepsilon}Z_{\ell}\varphi^{\varepsilon}\,{\rm d}x'\,{\rm d}t\right|
=\left|\int_{\omega}\hat{c}\,h_1^{\varepsilon}\Bigl(\hat{a}_1{h}^{\varepsilon}_{1}+\hat{a}_2{\mathfrak{h}}^{\varepsilon}_{1} +\hat{a}_3{u}^{\varepsilon}_{1}+\hat{a}_4\varphi^{\varepsilon}+\gamma\hat{a}_5\varphi^{\varepsilon}
\Bigr){\rm d}x\,{\rm d}t \right|
\\
\le
C\left( \|\mathcal{U}_{n}^{\varepsilon}|_\omega\|^2_{L^2(\omega)}+\|W^{\varepsilon}|_\omega\|^2_{L^2(\omega)} +\gamma^2\|\varphi^{\varepsilon}\|^2_{L^2(\omega)}\right).
\end{multline}

Combining \eqref{19a}, \eqref{unacaso}, \eqref{unacaso2} and similar inequalities for the other terms of \eqref{Al} yields (we restore the index $\gamma$)
\begin{multline}
\gamma\left( \|Z_{\ell}\mathcal{U}_\gamma^{\varepsilon}\|^2_{L^2(Q^+)}+\|Z_{\ell}W_\gamma^{\varepsilon}\|^2_{L^2(Q^-)}\right) 
 \leq C\Big\{\frac{1}{\gamma}\|\mathcal{F}_\gamma\|^2_{H^1_{tan,\gamma}(Q^+)}
 +\|\mathcal{U}_\gamma^{\varepsilon}\|^2_{H^1_{tan,\gamma}(Q^+)}
 +\|W_\gamma^{\varepsilon}\|^2_{H^1_{\gamma}(Q^-)}
\\
 +\gamma \left( \|\mathcal{U}_{n\gamma}^{\varepsilon}|_\omega\|^2_{L^2(\omega)} +  \|W_{\gamma}^{\varepsilon}|_\omega\|^2_{L^2(\omega)} \right)
\Big\},
 \quad  0<\varepsilon<\varepsilon_0,\; \gamma\ge \gamma_0,
\label{Zell}
\end{multline}
where $C$ is independent of ${\varepsilon},\gamma$. Then from \eqref{126''}, \eqref{127}, \eqref{L2b}, \eqref{Zell} we obtain
\begin{multline}
\gamma\left( \|\mathcal{U}_\gamma^{\varepsilon}\|^2_{H^1_{tan,\gamma}(Q^+)}
+\|W_\gamma^{\varepsilon}\|^2_{H^1_{\gamma}(Q^-)}\right) 
 \leq C\Big\{\frac{1}{\gamma}\|\mathcal{F}_\gamma\|^2_{H^1_{tan,\gamma}(Q^+)}
 +\|\mathcal{U}_\gamma^{\varepsilon}\|^2_{H^1_{tan,\gamma}(Q^+)}
 +\|W_\gamma^{\varepsilon}\|^2_{H^1_{\gamma}(Q^-)}
\\ +\gamma \left( \|\mathcal{U}_{n\gamma}^{\varepsilon}|_\omega\|^2_{L^2(\omega)} +  \|W_{\gamma}^{\varepsilon}|_\omega\|^2_{L^2(\omega)} \right)
\Big\},
 \quad  0<\varepsilon<\varepsilon_0,\; \gamma\ge \gamma_0,
\label{H1}
\end{multline}
where $C$ is independent of ${\varepsilon},\gamma$. 
%
We need the following estimates for the trace of $\mathcal{U}^{\varepsilon}_n,W^{\varepsilon}$.
\begin{lemma}\label{interpol}
The functions $\mathcal{U}^{\varepsilon}_n,W^{\varepsilon}$ satisfy
\begin{equation}
\begin{array}{ll}\label{interpol1}
\gamma\|\mathcal{U}_{n\gamma}^{\varepsilon}|_\omega\|^2_{L^2(\omega)}
+
\|\mathcal{U}_{n\gamma}^{\varepsilon}|_\omega\|^2_{H^{1/2}_{\gamma}(\omega)}
\le C\left(\|\mathcal{F}_\gamma\|^2_{L^2(Q^+)}
 +\|\mathcal{U}_\gamma^{\varepsilon}\|^2_{H^1_{tan,\gamma}(Q^+)}
\right),

\end{array}
\end{equation}
\begin{equation}
\begin{array}{ll}\label{interpol2}

\gamma  \|W_{\gamma}^{\varepsilon}|_\omega\|^2_{L^2(\omega)}
+
\|W_{\gamma}^{\varepsilon}|_\omega\|^2_{H^{1/2}_{\gamma}(\omega)}
\le C  \|W_\gamma^{\varepsilon}\|^2_{H^1_{\gamma}(Q^-)}.

\end{array}
\end{equation}
\end{lemma}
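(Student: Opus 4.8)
The plan is to derive both bounds from a single $\gamma$-uniform trace estimate on the half-spaces $Q^\pm$. The basic tool I would use is: for $v\in H^1_\gamma(Q^+)$ with trace $v|_{\omega}$ on $\omega=\{x_1=0\}$, one has $\|v|_{\omega}\|^2_{H^{1/2}_\gamma(\omega)}\le C\,\|v\|^2_{H^1_\gamma(Q^+)}$ with $C$ independent of $\gamma\ge1$ (and likewise on $Q^-$). This follows in one line by Fourier transforming in the tangential variables $\xi'=(\xi_0,\xi_2,\xi_3)$: writing $|\widehat v(\xi',0)|^2=-\int_0^{+\infty}\partial_1|\widehat v(\xi',x_1)|^2\,dx_1\le \lambda^{1,\gamma}(\xi')\int_0^{+\infty}|\widehat v|^2\,dx_1+(\lambda^{1,\gamma}(\xi'))^{-1}\int_0^{+\infty}|\partial_1\widehat v|^2\,dx_1$, multiplying by $\lambda^{1,\gamma}(\xi')$ and integrating in $\xi'$ (alternatively, one may invoke the standard weighted trace theorem). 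Moreover, directly from the definition of $\lambda^{s,\gamma}$ one has $\gamma\|w\|^2_{L^2(\omega)}\le\|w\|^2_{H^{1/2}_\gamma(\omega)}$ for any trace $w$, so it suffices in each case to control only the $H^{1/2}_\gamma(\omega)$-norm of the trace.

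Given this, \eqref{interpol2} is immediate: since $W^\varepsilon_\gamma\in H^1_\gamma(Q^-)$, the trace estimate on $Q^-$ yields $\|W^\varepsilon_\gamma|_{\omega}\|^2_{H^{1/2}_\gamma(\omega)}\le C\|W^\varepsilon_\gamma\|^2_{H^1_\gamma(Q^-)}$, and adding $\gamma\|W^\varepsilon_\gamma|_{\omega}\|^2_{L^2(\omega)}\le\|W^\varepsilon_\gamma|_{\omega}\|^2_{H^{1/2}_\gamma(\omega)}$ gives the claim.

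For \eqref{interpol1} the subtlety I need to handle is that $\mathcal{U}^\varepsilon$ is controlled only in the conormal space $H^1_{tan,\gamma}(Q^+)$, which near $\omega$ does not dominate $\partial_1$, since the conormal field $Z_1=\sigma(x_1)\partial_1$ degenerates at $x_1=0$. However, the noncharacteristic component $\mathcal{U}^\varepsilon_n=(q^\varepsilon,u_1^\varepsilon,h_1^\varepsilon)$ is precisely the one whose normal derivative is already available: because $Z_0=\partial_t$, $Z_2=\partial_2$, $Z_3=\partial_3$, the tangential derivatives $\partial_t\mathcal{U}^\varepsilon_{n\gamma}$, $\partial_2\mathcal{U}^\varepsilon_{n\gamma}$, $\partial_3\mathcal{U}^\varepsilon_{n\gamma}$ together with $\gamma\,\mathcal{U}^\varepsilon_{n\gamma}$ lie in $L^2(Q^+)$ with norm $\le C\|\mathcal{U}^\varepsilon_\gamma\|_{H^1_{tan,\gamma}(Q^+)}$, while the missing piece $\|\partial_1\mathcal{U}^\varepsilon_{n\gamma}\|^2_{L^2(Q^+)}$ is bounded by \eqref{130}. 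Hence $\mathcal{U}^\varepsilon_{n\gamma}\in H^1_\gamma(Q^+)$ with $\|\mathcal{U}^\varepsilon_{n\gamma}\|^2_{H^1_\gamma(Q^+)}\le C\big(\|\mathcal{F}_\gamma\|^2_{L^2(Q^+)}+\|\mathcal{U}^\varepsilon_\gamma\|^2_{H^1_{tan,\gamma}(Q^+)}\big)$; applying the trace estimate on $Q^+$ and once more absorbing the $\gamma\|\cdot\|^2_{L^2(\omega)}$ term into the $H^{1/2}_\gamma(\omega)$-norm yields \eqref{interpol1}.

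The argument is largely routine once \eqref{130} is available; the only point to watch is that it is genuinely the noncharacteristic components $(q^\varepsilon,u_1^\varepsilon,h_1^\varepsilon)$, and not all of $\mathcal{U}^\varepsilon$, whose normal derivative is under control — which is why the lemma is stated for $\mathcal{U}^\varepsilon_n$ only — and that every constant entering the trace inequality and \eqref{130} is independent of $\varepsilon$ and $\gamma$, so the resulting bounds are uniform in both parameters as required for the passage $\varepsilon\to0$.
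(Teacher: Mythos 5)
Your proof is correct, and the constants it produces are uniform in $\varepsilon$ and $\gamma$, as required; but the technical route differs from the paper's. Both arguments hinge on the same key input, namely the normal-derivative estimate \eqref{130} for the noncharacteristic components, and on the fact that $Z_0=\partial_t$, $Z_2=\partial_2$, $Z_3=\partial_3$ are genuine derivatives, so the conormal norm controls all tangential derivatives of $\mathcal{U}_{n\gamma}^{\varepsilon}$. You package this as: $\mathcal{U}_{n\gamma}^{\varepsilon}\in H^1_\gamma(Q^+)$ with $\|\mathcal{U}_{n\gamma}^{\varepsilon}\|^2_{H^1_\gamma(Q^+)}\le C(\|\mathcal{F}_\gamma\|^2_{L^2(Q^+)}+\|\mathcal{U}_\gamma^{\varepsilon}\|^2_{H^1_{tan,\gamma}(Q^+)})$, followed by the $\gamma$-uniform trace theorem $H^1_\gamma\to H^{1/2}_\gamma(\omega)$, which you prove by Fourier transforming in $(t,x')$ and using the elementary inequality $\lambda^{1,\gamma}\ge\gamma$ to absorb the $\gamma\|\cdot\|^2_{L^2(\omega)}$ term. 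The paper instead builds the trace bound by hand: it first gets $\gamma\|\mathcal{U}_{n\gamma}^{\varepsilon}|_\omega\|^2_{L^2(\omega)}$ from the identity $|\mathcal{U}_{n\gamma}^{\varepsilon}|^2|_{x_1=0}=-2\int_0^\infty \mathcal{U}_{n\gamma}^{\varepsilon}\cdot\partial_1\mathcal{U}_{n\gamma}^{\varepsilon}\,dx_1$ together with \eqref{130} and the $\gamma$-homogeneity of the conormal norm, then derives an $H^1_\gamma(\omega)$ trace bound by integrating by parts tangentially (which temporarily brings in $\|\mathcal{U}_\gamma^{\varepsilon}\|_{H^2_{tan,\gamma}(Q^+)}$), and finally interpolates between the two to reach the product-form bound $\|\mathcal{U}_{n\gamma}^{\varepsilon}|_\omega\|^2_{H^{1/2}_\gamma(\omega)}\le 2\|\mathcal{U}_\gamma^{\varepsilon}\|_{H^1_{tan,\gamma}(Q^+)}\|\partial_1\mathcal{U}_{n\gamma}^{\varepsilon}\|_{L^2(Q^+)}$, after which \eqref{130} gives \eqref{interpol1}. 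Your route is arguably more elementary in that it never touches second-order conormal derivatives and replaces the interpolation step by the standard weighted trace inequality; the paper's route stays entirely with integration by parts and retains a bilinear bound whose two factors can be balanced separately with Young's inequality. For \eqref{interpol2} your observation is the natural one: since $W_\gamma^{\varepsilon}$ is controlled in the full $H^1_\gamma(Q^-)$ norm, the same trace inequality applies directly, which coincides in substance with the paper's remark that the same argument is repeated on $Q^-$.
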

The proof of Lemma \ref{interpol} is given in Section \ref{proofinterpol}
at the end of this article. Substituting \eqref{interpol1}, \eqref{interpol2} in \eqref{H1} and taking $\gamma_0$ large enough yields
\begin{equation}
\gamma\left( \|\mathcal{U}_\gamma^{\varepsilon}\|^2_{H^1_{tan,\gamma}(Q^+)}
+\|W_\gamma^{\varepsilon}\|^2_{H^1_{\gamma}(Q^-)}\right) 
 \leq \frac{C}{\gamma}\|\mathcal{F}_\gamma\|^2_{H^1_{tan,\gamma}(Q^+)}
, \quad  0<\varepsilon<\varepsilon_0,\; \gamma\ge \gamma_0,
\label{H11}
\end{equation}
where $C$ is independent of ${\varepsilon},\gamma$. Finally, from
\eqref{gradphi}, \eqref{interpol1} and \eqref{H11} we get
\begin{equation}
\begin{array}{ll}\label{gradphi2}
\ds \gamma \left( \|\mathcal{U}_{n\gamma}^{\varepsilon}|_\omega\|^2_{H^{1/2}_{\gamma}(\omega)}+ \|W_{\gamma}^{\varepsilon}|_\omega\|^2_{H^{1/2}_{\gamma}(\omega)} \right)
+ \gamma^2 \|\varphi^{\varepsilon}\|^2_{H^{1}_{\gamma}(\omega)}
 \leq \frac{C}{\gamma}\|\mathcal{F}_\gamma\|^2_{H^1_{tan,\gamma}(Q^+)}
.
\end{array}
\end{equation}
Adding \eqref{H11}, \eqref{gradphi2} gives \eqref{54'}, The proof of Theorem \ref{lem1} is complete.

\section{Well-posedness of the hyperbolic regularized problem}\label{well}

In this section we prove the existence of the solution of \eqref{77'}. Its restriction to the time interval $(-\infty,T]$ will provide the solution of problem \eqref{77}. From now on, in the proof of the existence of the solution, $\varepsilon$ is fixed and so we omit it and we simply write $\mathcal{U}$ instead of $\mathcal{U}^{\varepsilon}$, $W$ instead of $W^{\varepsilon}$, $\varphi$ instead of $\varphi^{\varepsilon}$.

In view of the result of Lemma \ref{equivalence} (see Section \ref{equival}) we can consider system \eqref{50"} instead of \eqref{78'}.
First of all, we write the boundary conditions in different form, by eliminating the derivatives of $\varphi$.
We substitute \eqref{79'} in the boundary conditions for $\mathfrak{E}_2,\mathfrak{E}_3$ and take account of the constraint \eqref{87} and the choices \eqref{nu}, \eqref{E}. We get
\begin{equation}
\begin{array}{ll}\label{108}
{q} -\hat{\mathfrak{h}}_2\mathfrak{H}_2-\hat{\mathfrak{h}}_3\mathfrak{H}_3 +\varepsilon\,
\hat{E}_1 {E}_N+  [ \partial_1\hat{q}]\varphi=0,  \qquad&\\
\mathfrak{E}_2-\varepsilon\,\widehat{\mathcal{H}}_3u_1 +\varepsilon\,\hat{v}_3\mathcal{H}_N +\varepsilon a_1\varphi=0, \qquad& \\
\mathfrak{E}_3+\varepsilon\,\widehat{\mathcal{H}}_2u_1 -\varepsilon\,\hat{v}_2\mathcal{H}_N +\varepsilon a_2\varphi=0, \qquad& 
\mbox{on}\ \omega,
\end{array}
\end{equation}
where the precise form of the coefficients $a_1,a_2$ is not important. For later use we observe that \eqref{87},  \eqref{79'}-\eqref{82'} is equivalent to \eqref{87}, \eqref{79'}, \eqref{108}. Notice that the last two equations in \eqref{108}
yield
\begin{equation}
\begin{array}{ll}\label{109}
\varepsilon\, \hat{E}_1 u_1 +\hat{v}_2 \mathfrak{E}_2+\hat{v}_3 \mathfrak{E}_3 +\varepsilon a_3\varphi=0,
\end{array}
\end{equation}
where $a_3=a_1\hat{v}_2 +a_2\hat{v}_3$.


Let us write the system \eqref{77'a}, \eqref{50"},  \eqref{108} in compact form as
\begin{equation}
\begin{array}{ll}\label{compactform2}
\begin{cases}
\mathcal{L}\left(\begin{array}{c}
\mathcal{U} \\
W
 \end{array} \right)=\left(\begin{array}{c}
\mathcal{F} \\
0
 \end{array} \right) &\mbox{on}\; {Q^+}\times {Q^-},
\\
M
\left(\begin{array}{c}
\mathcal{U} \\
W
 \end{array} \right)+b\,\varphi =0,
 &\mbox{in}\;\omega,
\\
(\mathcal{U},W,\varphi )=0\qquad &\mbox{for}\ t<0,
\end{cases}\end{array}
\end{equation}
where the matrix $M$ and the vector $b$ are implicitly defined by \eqref{108}.


Let us multiply \eqref{compactform2} by $e^{-\gamma t}$ with $\gamma\ge1$; according to the rule $e^{-\gamma t}\dt u=(\gamma+\dt)e^{-\gamma t}u$, \eqref{compactform2} becomes equivalent to
\begin{equation}
\begin{array}{ll}\label{compactform3}
\begin{cases}
\mathcal{L}_{\gamma}\left(\begin{array}{c}
\mathcal{U}_{\gamma} \\
W_{\gamma}
 \end{array} \right)=\left(\begin{array}{c}
\mathcal{F}_{\gamma} \\
0
 \end{array} \right) &\mbox{on}\; {Q^+}\times {Q^-},
 \\
M
\left(\begin{array}{c}
\mathcal{U}_{\gamma} \\
W_{\gamma}
 \end{array} \right)+b\,\varphi_{\gamma} =0
 &\mbox{in}\;\omega,
\\
(\mathcal{U}_{\gamma},W_{\gamma},\varphi_{\gamma} )=0\qquad &\mbox{for}\ t<0.
\end{cases}\end{array}
\end{equation}
where
\[
\mathcal{L}_{\gamma}:=\gamma 
\left(\begin{array}{cc}
\hat{\mathcal{A}}_0&0 \\
0&M_0^\varepsilon
 \end{array} \right)+\mathcal{L},
\]
$\mathcal{U}_{\gamma}=e^{-\gamma t}\,\mathcal{U}, W_{\gamma}=e^{-\gamma t}\, W, \varphi_{\gamma}=e^{-\gamma t}\varphi$, etc..

First we solve \eqref{compactform3} under the assumption that $\varphi_{\gamma}$ is given.
\begin{lemma}\label{lemmaH1}
There exists $\gamma_0>0$ such that for all $\gamma\ge\gamma_0$ and for all given $\mathcal{F} \in e^{\gamma t} H^1_{tan,\gamma}(Q^+)$ and
$\varphi\in e^{\gamma t}H^{3/2}_\gamma(\omega)$ vanishing in the past,  the problem 
\eqref{compactform3} has a unique solution $(\mathcal{U},W)\in e^{\gamma t} H^1_{tan,\gamma}(Q^+)\times e^{\gamma t} H^1_\gamma(Q^-)$ with $(q,u_{ 1}h_1,W)_{|\omega}\in e^{\gamma t}{H^{1/2}_\gamma(\omega)}$, such that
\begin{multline}
\label{estimsol2}
\ds \|e^{-\gamma t}\,(\mathcal{U},W)\|_{H^1_{tan,\gamma}(Q^+) \times H^1_\gamma(Q^-)}
+\|e^{-\gamma t}(q,u_{1},h_{1},W_{})_{|\omega}\|_{H^{1/2}_\gamma(\omega)}
\\
\ds \leq 
\frac{C}{\gamma}  \left( \|e^{-\gamma t}\,\mathcal{F} \|_{H^1_{tan,\gamma}(Q^+) }+\|e^{-\gamma t}\,\varphi\|_{H^{3/2}_\gamma(\omega)} \right)
.
\end{multline}

\end{lemma}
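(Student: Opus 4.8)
The plan is to regard $\varphi_\gamma$ as a given datum and to read \eqref{compactform3} as a linear initial--boundary value problem for the \emph{coupled} pair of symmetric hyperbolic systems \eqref{77'a} (for $\mathcal{U}_\gamma$ in $Q^+$) and \eqref{50"} (for $W_\gamma$ in $Q^-$), the algebraic relations \eqref{108} now playing the role of genuine boundary conditions with source term $-b\varphi_\gamma$. First I would reflect $Q^-$ onto the half-space $\{x_1>0\}$ by the map $x_1\mapsto-x_1$ and assemble $\mathcal{U}_\gamma$ and $W_\gamma$ into a single unknown governed by a block--diagonal symmetric hyperbolic system on $Q^+$, the vacuum block being the secondarily symmetrized system \eqref{50"} (legitimate by Lemma~\ref{equivalence}), which for $0<\varepsilon<\varepsilon_0$ is symmetric hyperbolic since $M_0^\varepsilon>0$ by \eqref{99} with the choice \eqref{nu}. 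Both blocks have a boundary that is characteristic of constant multiplicity: for \eqref{77'a} the boundary matrix on $\omega$ is $-\mathcal{E}_{12}$ by \eqref{a10}, of rank two, and for \eqref{50"} the boundary matrix $M_1^\varepsilon$ has constant rank (by \eqref{determinant} it is singular precisely because of the choice \eqref{nu}). The homogeneous version of \eqref{108} defines a subspace of codimension $3=1+2$, matching by Proposition~\ref{nrbc} the total number of incoming characteristics.

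The key point is the dissipativity of the boundary conditions. With the choices \eqref{nu} and \eqref{E} fixed as in Section~\ref{basic}, inserting the homogeneous version of \eqref{108} (that is, $\varphi_\gamma=0$) into the boundary quadratic form $\mathcal{A}^\varepsilon=-\tfrac{1}{2}(\mathcal{E}_{12}\mathcal{U}^\varepsilon,\mathcal{U}^\varepsilon)|_\omega+\tfrac{1}{2}(M_1^\varepsilon W^\varepsilon,W^\varepsilon)|_\omega$ and using \eqref{109}, the same cancellations that produced \eqref{AAA} yield $\mathcal{A}^\varepsilon=\varphi_\gamma\{\cdots\}$, the dangerous $\varepsilon^{-1}$ terms of \eqref{A} disappearing thanks to the factor $\varepsilon$ in the last two equations of \eqref{108}; in particular $\mathcal{A}^\varepsilon|_{\varphi_\gamma=0}\equiv0$ on $\omega$. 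Hence \eqref{108} is a maximal nonnegative (indeed conservative) boundary condition for the coupled system, and the well-posedness theory for symmetric hyperbolic systems with characteristic boundary of constant multiplicity and maximal nonnegative boundary conditions of \cite{secchi95,secchi96} applies: there is $\gamma_0\ge1$ such that for every $\gamma\ge\gamma_0$ problem \eqref{compactform3} has a unique solution $(\mathcal{U},W)$ in the weighted conormal/Sobolev spaces whenever $\mathcal{F}\in e^{\gamma t}H^1_{tan,\gamma}(Q^+)$ and $\varphi\in e^{\gamma t}H^{3/2}_\gamma(\omega)\subset e^{\gamma t}H^1_\gamma(\omega)$ vanish in the past, with the invertible parts of the boundary matrices giving the traces $(q,u_1,h_1,W)|_\omega\in e^{\gamma t}H^{1/2}_\gamma(\omega)$. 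Moreover, by Lemma~\ref{equivalence} the solution satisfies the divergence constraints \eqref{85}, whence the normal derivative of $W$ is recovered from \eqref{85} together with the tangential derivatives (as in the derivation of \eqref{127}), so that in fact $W\in e^{\gamma t}H^1_\gamma(Q^-)$.

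It remains to prove the estimate \eqref{estimsol2}, which I would obtain by rerunning the energy argument of the proof of Theorem~\ref{lem1} with $\varphi_\gamma$ demoted from unknown to datum. Multiplying \eqref{77'a} by $e^{-\gamma t}\mathcal{U}_\gamma$ and \eqref{50"} by $e^{-\gamma t}W_\gamma$, integrating by parts over $Q^\pm$ and using the $\varepsilon$-uniform bounds \eqref{ordine}, one gets exactly as in \eqref{19a0} an inequality with $\gamma(\|\mathcal{U}_\gamma\|^2_{L^2(Q^+)}+\|W_\gamma\|^2_{L^2(Q^-)})+\int_\omega\mathcal{A}^\varepsilon$ on the left, where now, after inserting \eqref{108}, $\mathcal{A}^\varepsilon=\varphi_\gamma\{\,$a linear combination with basic-state coefficients of $u_{1\gamma},\mathfrak{H}_{2\gamma},\mathfrak{H}_{3\gamma}$ and $E_{N\gamma}\,\}$, with no $\varepsilon^{-1}$. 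Bounding $|\int_\omega\mathcal{A}^\varepsilon|$ by Young's inequality and absorbing the boundary-trace terms via Lemma~\ref{interpol} (and a large $\gamma_0$) gives the $L^2$ analogue of \eqref{L2b} with an extra $\tfrac{C}{\gamma}\|\varphi_\gamma|_\omega\|^2_{L^2(\omega)}$ on the right. Applying $Z_\ell$ ($\ell=0,2,3$) to \eqref{77'a}, \eqref{50"} and to \eqref{108} produces a boundary form $\mathcal{A}^\varepsilon_\ell=Z_\ell\varphi_\gamma\{\,$linear in $Z_\ell$ of the boundary traces$\,\}+\mathrm{l.o.t.}$; here the new terms are handled by the trace estimates of Lemma~\ref{interpol} and an integration-by-parts/duality pairing of $H^{1/2}(\omega)$ against $H^{-1/2}(\omega)$ exactly analogous to \eqref{unacaso}--\eqref{unacaso2}, and this is precisely where $\|\varphi_\gamma|_\omega\|_{H^{3/2}_\gamma(\omega)}$ enters (it controls $\|Z_\ell\varphi_\gamma|_\omega\|_{H^{1/2}_\gamma(\omega)}$). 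Combining these with \eqref{interpol1}, \eqref{interpol2}, exactly as in the passage from \eqref{H1} to \eqref{H11}--\eqref{gradphi2}, yields \eqref{estimsol2}.

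I expect the main obstacle to be the second step: verifying carefully that the coupled, transmission-type boundary conditions \eqref{108} genuinely fall into the maximal nonnegative characteristic framework of \cite{secchi95,secchi96} — in particular that the identity $\mathcal{A}^\varepsilon|_{\varphi_\gamma=0}\equiv0$ together with the codimension count $3=1+2$ certifies maximality — so that the existence and uniqueness part can be invoked as a black box and the $W\in H^1$ upgrade via \eqref{85} is justified. The a priori estimate itself is then a routine, if lengthy, repetition of the computations of Section~\ref{basic}.
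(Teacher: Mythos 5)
Your core argument coincides with the paper's: you insert the homogeneous form of \eqref{108}, \eqref{109} into the boundary quadratic form and conclude $\mathcal{A}^{\varepsilon}=0$ on $\omega$ (this is exactly \eqref{110} with $\varphi=0$), you match the codimension of the boundary subspace with the number of incoming characteristics from Proposition \ref{nrbc} to get maximal nonnegativity, and you invoke the characteristic-boundary theory of \cite{secchi95,secchi96} under constant multiplicity \cite{rauch85}; the reflection of $Q^-$ and the block-diagonal assembly are only cosmetic, and the $\varepsilon$-uniformity you stress is not needed here since $\varepsilon$ is fixed in this step. Where you genuinely diverge is in the treatment of the inhomogeneous boundary term $-b\,\varphi$ and of the estimate \eqref{estimsol2}: the paper reduces to homogeneous boundary conditions by subtracting a lifting $(\mathcal{U}'_{\gamma},W'_{\gamma})\in H^2_\gamma(Q^+)\times H^2_\gamma(Q^-)$ satisfying $M(\mathcal{U}',W')+b\,\varphi=0$ on $\omega$ --- this is precisely where $\|e^{-\gamma t}\varphi\|_{H^{3/2}_\gamma(\omega)}$ enters, $H^{3/2}_\gamma(\omega)$ being the trace space of $H^2_\gamma$ --- and then quotes the homogeneous theory, whereas you keep the boundary source and re-run the weighted energy and tangential-derivative machinery of Section \ref{basic} with $\varphi$ demoted to a datum, recovering the $H^{3/2}$ norm through the pairing of $Z_{\ell}\varphi\in H^{1/2}_\gamma(\omega)$ against traces in $H^{-1/2}_\gamma(\omega)$. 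Your route is workable and has the merit of making \eqref{estimsol2} explicit (the paper leaves it implicit in the quoted theory plus the lifting), but note two points: (i) the existence results of \cite{secchi95,secchi96} are formulated for \emph{homogeneous} maximally nonnegative boundary conditions, so for the existence part you still need the paper's lifting step (or an equivalent reduction); treating $-b\,\varphi$ as a source in the boundary condition is not covered by those references as a black box; and (ii) in this lemma $\varphi$ is an arbitrary datum, so the constraints \eqref{86}, \eqref{87} and the representation \eqref{12a} exploited throughout Section \ref{basic} are no longer available --- your plan correctly avoids them (using only Young's inequality, duality and Lemma \ref{interpol}), but the phrase ``the same cancellations that produced \eqref{AAA}'' should be understood as the direct computation \eqref{110}, not as a literal rerun of \eqref{AA}--\eqref{AAA}.
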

\begin{proof}
We insert the new boundary conditions \eqref{108}, \eqref{109} in 
the quadratic form $\mathcal{A}^{\varepsilon}$ (see \eqref{A}) and we get
\begin{equation}
\label{110}
\ds \mathcal{A}^{\varepsilon}:=-\frac{1}{2}(\hat{\mathcal{A}}_1+\mathcal{E}_{12})\,\mathcal{U}\cdot\mathcal{U} + \frac{1}{2}{M}_1^{\varepsilon}W\cdot W
=\left( [\partial\hat{q}]u_1 + a_2 \mathfrak{H}_2-a_1 \mathfrak{H}_3 - {\varepsilon}a_3 E_N \right) \varphi
\quad\mbox{on }\omega
.
\end{equation}
If we consider the boundary conditions \eqref{108}, \eqref{109} in homogeneous form, namely if we set $\varphi=0$, then from \eqref{110}
$$\mathcal{A}^{\varepsilon}=0 \qquad\mbox{on }\omega
.$$
We deduce that the boundary conditions \eqref{108} are nonnegative for ${\mathcal{L}}_\gamma$. 
As the number of boundary conditions in \eqref{108} is in agreement with the number of incoming characteristics for the operator ${\mathcal{L}}_\gamma$ (see Proposition \ref{nrbc}) we infer that the boundary conditions \eqref{108} are maximally nonnegative (but not strictly dissipative). 
Then we reduce the problem to one with homogeneous boundary conditions by subtracting from $(\mathcal{U}_{\gamma},W_{\gamma})$ a function $(\mathcal{U}'_{\gamma},W'_{\gamma})\in H^2_\gamma(Q^+) \times H^2_\gamma(Q^-)$ such that 
\begin{equation*}
\begin{array}{ll}\label{newbc2}
M\left(\begin{array}{c}
\mathcal{U}' \\
W'
 \end{array} \right)+b\,\varphi =0 \qquad\mbox{on }\omega
.
\end{array}
\end{equation*}
Finally, as the boundary is characteristic of constant multiplicity \cite{rauch85}, we may apply the result of \cite{secchi95,secchi96} and we get the solution with the prescribed regularity.
\end{proof}

The well-posedness of \eqref{77'} in $H^1_{tan}\times H^1$ is given by the following theorem.
\begin{theorem}\label{existenceH1}
There exists $\gamma_0>0$ such that for all $\gamma\ge\gamma_0$ and $\mathcal{F} \in e^{\gamma t} H^1_{tan,\gamma}(Q^+)$ vanishing in the past, the problem 
\eqref{77'} has a unique solution $(\mathcal{U},W)\in e^{\gamma t} H^1_{tan,\gamma}(Q^+)\times e^{\gamma t} H^1_{\gamma}(Q^-)$ with $(q,u_{ 1}h_1,W)_{|\omega}\in e^{\gamma t}{H^{1/2}_\gamma(\omega)}$,
$\varphi\in e^{\gamma t}H^{3/2}_\gamma(\omega)$.

\end{theorem}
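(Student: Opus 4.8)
The plan is to close, via a fixed point argument on the free boundary $\varphi$, the construction begun in Lemma~\ref{lemmaH1} (which only solves \eqref{77'} for $\varphi$ prescribed). Concretely, I would introduce a map $\Phi$ on the set of functions $\varphi\in e^{\gamma t}H^{3/2}_\gamma(\omega)$ vanishing in the past, defined in two steps: given such a $\varphi$, first apply Lemma~\ref{lemmaH1} to obtain the corresponding $(\mathcal{U},W)\in e^{\gamma t}H^1_{tan,\gamma}(Q^+)\times e^{\gamma t}H^1_\gamma(Q^-)$ solving \eqref{compactform3} with $\varphi$ frozen in the boundary conditions \eqref{108}; then set $\Phi(\varphi):=\tilde\varphi$, where $\tilde\varphi$ is reconstructed from the boundary traces of $(\mathcal{U},W)$ through the kinematic relation \eqref{79'}, made into a genuine transport equation for $\tilde\varphi$ with source $u_1|_\omega$. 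A fixed point $\varphi=\Phi(\varphi)$ is then exactly a solution of \eqref{77'}, because at the fixed point \eqref{79'} holds, and, by the equivalence recorded after \eqref{108}, the whole set of boundary conditions \eqref{79'}--\eqref{82'} is recovered; the constraints \eqref{84}--\eqref{87} hold automatically thanks to the vanishing past data.

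The first thing to establish is that $\Phi$ maps a ball of $e^{\gamma t}H^{3/2}_\gamma(\omega)$ into itself and is a contraction for $\gamma\ge\gamma_0$ large. For the first leg, estimate \eqref{estimsol2} gives that $\varphi\mapsto(\mathcal{U},W)$, and in particular the traces $(q,u_1,h_1,W)|_\omega$, depend on $\varphi$ in a Lipschitz way with constant $C/\gamma$; by linearity this transfers to differences. The delicate point is the second leg: \eqref{79'} alone is only a transport equation on $\omega$ and produces $\tilde\varphi$ just in $e^{\gamma t}H^{1/2}_\gamma(\omega)$ from $u_1|_\omega\in e^{\gamma t}H^{1/2}_\gamma(\omega)$, which is half a derivative short of re-entering Lemma~\ref{lemmaH1}. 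Here the stability assumption \eqref{41} is decisive: as in the derivation of \eqref{12a} (and \eqref{gradphi}), condition \eqref{41}, combined with $\widehat H_N=\widehat{\mathcal H}_N=0$, makes the $3\times 3$ linear system built from \eqref{79'}, \eqref{86}, \eqref{87} invertible, so that $\nabla_{t,x'}\tilde\varphi$ is a linear combination of the traces $u_1|_\omega, h_1|_\omega, \mathfrak h_1|_\omega\in e^{\gamma t}H^{1/2}_\gamma(\omega)$ plus a lower-order multiple of $\tilde\varphi$; solving this first-order system with the $\gamma$-weighted energy method yields $\tilde\varphi\in e^{\gamma t}H^{3/2}_\gamma(\omega)$ and simultaneously a further $1/\gamma$ factor. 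Composing the two legs and tracking these gains shows $\Phi$ is a contraction once $\gamma_0$ is large enough, and Banach's fixed point theorem produces the unique $\varphi\in e^{\gamma t}H^{3/2}_\gamma(\omega)$, hence the solution $(\mathcal{U},W)$ with the regularity listed in Lemma~\ref{lemmaH1} together with the trace $(q,u_1,h_1,W)|_\omega\in e^{\gamma t}H^{1/2}_\gamma(\omega)$.

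Finally, one verifies that this $(\mathcal{U},W,\varphi)$ really solves \eqref{77'a}--\eqref{83g'}: the interior equations and \eqref{108} hold by Lemma~\ref{lemmaH1}, \eqref{79'} holds because $\varphi$ is a fixed point, the constraints \eqref{84}--\eqref{87} propagate from the vanishing data, and these in turn make \eqref{12a} — hence the reconstruction step — self-consistent. Uniqueness is immediate from the a priori estimate \eqref{54'} of Theorem~\ref{lem1}. I expect the main obstacle to be precisely this regularity bookkeeping around the front: the naive Picard scheme based on \eqref{79'} loses half a derivative at each step and cannot close, so the whole argument hinges on exploiting the hidden ``elliptic'' character of the boundary conditions through \eqref{12a}, which is available only under the stability condition \eqref{41}, and on keeping careful track of the $1/\gamma$ gains in both legs so that $\Phi$ is genuinely contractive in the $\gamma$-weighted $H^{3/2}$ norm rather than merely bounded.
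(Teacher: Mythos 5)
Your overall strategy is the same as the paper's: freeze the front, solve the frozen-front problem by Lemma~\ref{lemmaH1}, reconstruct a new front from the boundary traces, recover the lost half derivative through the stability condition \eqref{41} by inverting the linear system for $\nabla_{t,x'}\varphi$, and close a contraction in $e^{\gamma t}H^{3/2}_\gamma(\omega)$ by tracking the $1/\gamma$ gains. But there is a genuine gap at the decisive step. You claim that ``the constraints \eqref{84}--\eqref{87} hold automatically thanks to the vanishing past data'' and then invert the $3\times 3$ system built from \eqref{79'}, \eqref{86}, \eqref{87} to get $\nabla_{t,x'}\tilde\varphi$ in terms of $(u_1,h_1,\mathfrak{h}_1)|_\omega$. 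This is only true for the full coupled problem, where one and the same $\varphi$ appears in all boundary conditions. For an iterate it fails: the boundary conditions \eqref{108} fed into Lemma~\ref{lemmaH1} contain the \emph{old} front $\overline\varphi$, while the new front $\tilde\varphi$ is produced afterwards from \eqref{79'}. The plasma constraint \eqref{86} does hold with $\tilde\varphi$ (it propagates from the interior plasma equations and the kinematic condition with $\tilde\varphi$), but the vacuum constraint \eqref{87} does not: it is generated by the first component of the vacuum system together with the boundary conditions \eqref{81'}--\eqref{82'}, which involve $\overline\varphi$. Hence the relation between $\mathfrak{h}_1|_\omega$ and $\tilde\varphi$ that your reconstruction needs is simply not available at the iterate level, and the ``hidden elliptic'' inversion as you state it cannot be performed.

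The paper's proof repairs exactly this point: it introduces the defect $G_\gamma:=\mathfrak{h}_{1\gamma}-\partial_2(\widehat{\mathcal H}_2\varphi_\gamma)-\partial_3(\widehat{\mathcal H}_3\varphi_\gamma)$ on $\omega$ (see \eqref{equa4}), shows that $G_\gamma$ satisfies the transport equation \eqref{equaG} forced by $\varphi_\gamma-\overline\varphi_\gamma$ and its tangential derivatives, and, after substituting the modified reconstruction \eqref{equaphi} (which now contains $G_\gamma$ as an extra unknown), derives the bound \eqref{equaG3}, i.e. $\|G_\gamma\|_{H^{1/2}_\gamma(\omega)}\le \tfrac{C}{\gamma}\bigl(\|\mathcal F_\gamma\|_{H^1_{tan,\gamma}(Q^+)}+\|\overline\varphi_\gamma\|_{H^{3/2}_\gamma(\omega)}\bigr)$. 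Only with this smallness of the defect do \eqref{estimgradphi} and \eqref{estimgradphi2} follow, giving the $H^{3/2}_\gamma$ bound on the new front with the $C/\gamma$ factor that makes the map a contraction. Without this device your second leg only yields $\tilde\varphi\in e^{\gamma t}H^{1/2}_\gamma(\omega)$ from \eqref{79'} (as you yourself note), plus an inversion you are not entitled to use, so the scheme does not close as written; the rest of your argument (verification at the fixed point, equivalence of \eqref{108} with \eqref{79'}--\eqref{82'}, uniqueness) is in line with the paper.
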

\begin{proof}
We prove the existence of the solution to \eqref{77'} by a fixed point argument. Let $\overline\varphi\in e^{\gamma t} H^{3/2}_{\gamma}(\omega_T)$ vanishing in the past. By Lemma \ref{lemmaH1}, for $\gamma$ sufficiently large there exists a unique solution  $(\mathcal{U},W)\in e^{\gamma t} H^1_{tan,\gamma}(Q^+)\times e^{\gamma t} H^1_\gamma(Q^-)$, with  $(q,u_1,h_1,W)_{|\omega} \in  e^{\gamma t} H^{1/2}_\gamma(\omega)$
of
\begin{equation}
\begin{array}{ll}\label{equa1}
\begin{cases}
\mathcal{L}_{\gamma}\left(\begin{array}{c}
\mathcal{U}_\gamma \\
W_\gamma
 \end{array} \right)=\left(\begin{array}{c}
\mathcal{F}_{\gamma} \\
0
 \end{array} \right) &\mbox{on}\; {Q^+}\times {Q^-},
 \\
M
\left(\begin{array}{c}
\mathcal{U}_\gamma \\
W_\gamma
 \end{array} \right)= -b\,\overline\varphi_\gamma
&\mbox{on}\;\omega,
\\
(\mathcal{U}_\gamma,W_\gamma)=0\qquad &\mbox{for}\ t<0,
\end{cases}\end{array}
\end{equation}
enjoying the a priori estimate
\eqref{estimsol2} with $\overline\varphi$ instead of $\varphi$.
Now consider the equation
\begin{equation}
\label{equa2}
{\gamma}\varphi_\gamma+\partial_t\varphi_\gamma+\hat{v}_2\partial_2\varphi_\gamma+\hat{v}_3\partial_3\varphi_\gamma-
\varphi_\gamma\partial_1\hat{v}_{N}=u_{1\gamma}
 , \qquad \mbox{on}\;\omega,
\end{equation}
where $u_{1\gamma} \in{H^{1/2}_{\gamma}(\omega)}$ is the trace of the component of  $\mathcal{U}_\gamma$ given in the previous step, vanishing for $t<0$. 
For $\gamma$ sufficiently large  there exists a unique solution $\varphi_{\gamma}\in H^{1/2}_{\gamma}(\omega)$, vanishing in the past, such that
\begin{equation}\label{129}
\ds\|\varphi_\gamma\|_{H^{1/2}_{\gamma}(\omega)}
\le\frac{C}{\gamma} \|u_{1\gamma} \|_{H^{1/2}_{\gamma}(\omega)}.
\end{equation}
From the plasma equation in \eqref{equa1} and from \eqref{equa2} we deduce the boundary constraint
\begin{equation}
\begin{array}{ll}\label{equa3}

{h}_{1\gamma}=\widehat{H}_2\partial_2\varphi_{\gamma} +\widehat{H}_3\partial_3\varphi_{\gamma} -
\varphi_{\gamma}\partial_1\widehat{H}_{N}\qquad \mbox{on}\ \omega.
\end{array}
\end{equation}
Since in the right-hand side of \eqref{equa1} we have $\overline\varphi$ instead of $\varphi$ we are not able to deduce the similar boundary constraint for the vacuum magnetic field. Instead, we obtain
\begin{equation}
\begin{array}{ll}\label{equa4}
{\mathfrak{h}}_{1\gamma} -\partial_2\bigl(\widehat{\mathcal{H}}_2\varphi_{\gamma} \bigr) -\partial_3\bigl(\widehat{\mathcal{H}}_3\varphi_{\gamma} \bigr)=G_{\gamma}
\qquad \mbox{on}\ \omega,

\end{array}
\end{equation}
where $G_{\gamma}$ solves 
\begin{equation}
\begin{array}{ll}\label{equaG}
\ds \frac{\tilde{d}}{dt}G_{\gamma}+a_2\partial_2(\varphi_{\gamma} -\overline\varphi_{\gamma})-a_1\partial_3(\varphi_{\gamma} -\overline\varphi_{\gamma}) + (\partial_2a_2-\partial_3a_1)(\varphi_{\gamma} -\overline\varphi_{\gamma})=0
\qquad \mbox{on}\ \omega,
\end{array}
\end{equation}
for $\tilde{d}/dt=\gamma+\partial_t+\partial_2(\hat{v}_2\cdot)+\partial_3(\hat{v}_3\cdot)$ and where the coefficients $a_1,a_2$ are the same of \eqref{108}. \eqref{equaG} is derived from the first equation of the vacuum part of \eqref{equa1}, \eqref{equa2} and the boundary conditions for $\mathfrak{E}_2,\mathfrak{E}_3$ in \eqref{equa1}.

Let us consider the linear system for $\nabla_{t,x'}\varphi_\gamma$ provided by equations \eqref{equa2}, \eqref{equa3} and \eqref{equa4}. By the stability condition \eqref{41} we can express $\nabla_{t,x'}\varphi_\gamma$ through $(h_{1\gamma}, \mathfrak{h}_{1\gamma}, u_{1\gamma})_{|\omega}, \varphi_\gamma, G_\gamma$, that is
\begin{equation}
\begin{array}{ll}\label{equaphi}
\nabla_{t,x'}\varphi_\gamma=a_1'h_{1\gamma}+a_2'\mathfrak{h}_{1\gamma}+a_3'u_{1\gamma}+a_4'\varphi_\gamma+a_5'G_\gamma,

\end{array}
\end{equation}
where the precise form of the coefficients $a_i'$ has no interest. Then, substituting into \eqref{equaG} yields
\begin{equation}
\begin{array}{ll}\label{equaG2}
\ds \frac{\tilde{d}}{dt}G_{\gamma}+b_0G_{\gamma}=b_1h_{1\gamma}+b_2\mathfrak{h}_{1\gamma}+b_3\varphi_\gamma+
a_2\partial_2\overline\varphi_{\gamma}-a_1\partial_3\overline\varphi_{\gamma} + (\partial_2a_2-\partial_3a_1)\overline\varphi_{\gamma}
\qquad \mbox{on}\ \omega,
\end{array}
\end{equation}
with suitable coefficients $b_i$.

From \eqref{equaG2}, for $\gamma$ sufficiently large, we get the estimate
\begin{multline}\label{equaG3}
\ds\|G_\gamma\|_{H^{1/2}_{\gamma}(\omega)}
\le\frac{C}{\gamma}\left( \|(h_{1\gamma},\mathfrak{h}_{1\gamma})\|_{H^{1/2}_{\gamma}(\omega)}
+ \|\varphi_\gamma\|_{H^{1/2}_{\gamma}(\omega)} + \|\overline\varphi_\gamma\|_{H^{3/2}_{\gamma}(\omega)}
\right)
\\
\ds \le\frac{C}{\gamma}\left( \|\mathcal{F}_{\gamma} \|_{H^1_{tan,\gamma}(Q^+) }
 + \|\overline\varphi_\gamma\|_{H^{3/2}_{\gamma}(\omega)}
\right),
\end{multline}
where we have applied \eqref{estimsol2} (with $\overline\varphi$ in place of $\varphi$) and \eqref{129}. Thus, from \eqref{equaphi} again, we obtain the estimate
\begin{multline}
\label{estimgradphi}
\ds \|\nabla_{t,x'}\varphi_\gamma\|_{H^{1/2}_{\gamma}(\omega)}
\le C\left( \|(u_{1\gamma},h_{1\gamma},\mathfrak{h}_{1\gamma})\|_{H^{1/2}_{\gamma}(\omega)}
+ \|\varphi_\gamma\|_{H^{1/2}_{\gamma}(\omega)} + \|G_\gamma\|_{H^{1/2}_{\gamma}(\omega)}
\right)
\\
\ds \le\frac{C}{\gamma}\left( \|\mathcal{F}_{\gamma} \|_{H^1_{tan,\gamma}(Q^+) }
 + \|\overline\varphi_\gamma\|_{H^{3/2}_{\gamma}(\omega)}
\right).
\end{multline}
Combining \eqref{estimsol2} (with $\overline\varphi$ in place of $\varphi$), \eqref{129} and \eqref{estimgradphi} gives
\begin{equation}
\label{estimgradphi2}
\ds \|\varphi_\gamma\|_{H^{3/2}_{\gamma}(\omega)}
 \le\frac{C}{\gamma}\left( \|\mathcal{F}_{\gamma} \|_{H^1_{tan,\gamma}(Q^+) }
 + \|\overline\varphi_\gamma\|_{H^{3/2}_{\gamma}(\omega)}
\right).
\end{equation}
This defines a map $\overline\varphi\to\varphi$ in $e^{\gamma t} H^{3/2}_{\gamma}(\omega_T)$. Let $\overline\varphi^1, \overline\varphi^2\in e^{\gamma t} H^{3/2}_{\gamma}(\omega_T)$, and $(\mathcal{U}^1,W^1), (\mathcal{U}^2,W^2)$, $\varphi^1, \varphi^2 $ be the corresponding solutions of \eqref{equa1}, \eqref{equa2}, respectively. Thanks to the linearity of the problems \eqref{equa1}, \eqref{equa2} we obtain, as for \eqref{estimgradphi2},
\begin{equation*}
\ds\|\varphi^1_\gamma-\varphi^2_\gamma\|_{H^{3/2}_{\gamma}(\omega)}
\ds\le\frac{C}{\gamma} \|\overline\varphi^1_\gamma- \overline\varphi^2_\gamma\|_{H^{3/2}_{\gamma}(\omega)}.
\end{equation*}
Then there exists $\gamma_0>0$ such that for all $\gamma\ge\gamma_0$ the map $\overline\varphi\to\varphi$ has a unique fixed point, by the contraction mapping principle. The fixed point $\overline\varphi=\varphi$, together with the corresponding solution of \eqref{equa1}, provides the solution of \eqref{compactform3}, \eqref{equa2}, that is a solution of \eqref{77'}. As for the boundary conditions, we have already observed that \eqref{87},  \eqref{79'}-\eqref{82'} is equivalent to \eqref{87}, \eqref{79'}, \eqref{108}. 
The proof is complete.
\end{proof}

\section{Proof of Theorem \ref{main}}\label{proofmain}

For all $\varepsilon$ sufficiently small, problem \eqref{77} admits a unique solution with the regularity described in Theorem \ref{existenceH1}. Due to the uniform a priori estimate \eqref{54'} we can estract a subsequence weakly convergent to functions $(\mathcal{U},W,\varphi)$ with
$(\mathcal{U}_\gamma,W_\gamma)\in{H^{1}_{tan,\gamma}(Q^+_T)}\times{H^{1}_\gamma(Q^-_T)}$ and
$(q_\gamma,u_{1\gamma},h_{1\gamma})|_{\omega_T}\in{H^{1/2}_\gamma(\omega_T)}$, $W_{\gamma}|_{\omega_T}\in{H^{1}_\gamma(\omega_T)}$ and
$\varphi_\gamma\in{H^1_\gamma(\omega_T)}$ (we use obvious notations). Let us decompose $W=(\mathfrak{H},\mathfrak{E})$ and perform a inverse change of unknown with respect to that of Section \ref{equiva2} to define $(\mathcal{H},E)$ from $(\mathfrak{H},\mathfrak{E})$. 
Passing to the limit in \eqref{35"}, \eqref{77}--\eqref{87} as $\varepsilon\to0$ immediately gives that $({U},\mathcal{H},\varphi)$ is a solution to \eqref{34'new}, \eqref{93}, \eqref{95} and $E=\mathfrak{E}=0$. Passing to the limit in \eqref{54'} gives the a priori estimate \eqref{54}. The proof of Theorem \ref{main} is complete.
\section{Equivalence of systems \eqref{35"} and \eqref{secsym2}
}\label{equival}

We prove the equivalence of systems \eqref{35"} and \eqref{secsym2} for every $\vec\nu\not=0$. This is the same as the equivalence of \eqref{78} and \eqref{50"}, or \eqref{78'} and \eqref{50"}.
\begin{lemma}\label{equivalence}
Assume that systems \eqref{35"} and \eqref{secsym2} have common initial data satisfying the constraints
$$
{\rm div}\,{\mathfrak{h}}^{\varepsilon}=0,\quad {\rm div}\,{\mathfrak{e}}^{\varepsilon}=0\qquad\mbox{in }\Omega^-\quad\mbox{for}\ t=0.
$$
Assuming that the corresponding Cauchy problems for \eqref{35"} and \eqref{secsym2} have a unique classical solution on a time interval $[0,T]$, then these solutions coincide on $[0,T]$ for all ${\varepsilon}$ sufficiently small.

\end{lemma}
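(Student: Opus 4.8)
The plan is to show that, under the divergence constraints ${\rm div}\,\mathfrak{h}^{\varepsilon}={\rm div}\,\mathfrak{e}^{\varepsilon}=0$ at $t=0$, the systems \eqref{35"} and \eqref{secsym2} possess exactly the same classical solution, so that the conclusion follows from the assumed uniqueness of either Cauchy problem. It is convenient to set $\Pi:=\varepsilon\partial_t\mathfrak{h}^{\varepsilon}+\nabla\times\mathfrak{E}^{\varepsilon}$, $\Sigma:=\varepsilon\partial_t\mathfrak{e}^{\varepsilon}-\nabla\times\mathfrak{H}^{\varepsilon}$, $\delta_1:={\rm div}\,\mathfrak{h}^{\varepsilon}$ and $\delta_2:={\rm div}\,\mathfrak{e}^{\varepsilon}$, so that \eqref{35"} is the assertion $\Pi=\Sigma=0$, \eqref{94"} is $\delta_1=\delta_2=0$, and \eqref{secsym2} reads $\tfrac1\varepsilon\Pi-\hat\eta(\vec\nu\times(\hat\eta^{-1}\Sigma))+\tfrac{\hat\eta\vec\nu}{\partial_1\widehat{\Phi}_1}\delta_1=0$ together with $\tfrac1\varepsilon\Sigma+\hat\eta(\vec\nu\times(\hat\eta^{-1}\Pi))+\tfrac{\hat\eta\vec\nu}{\partial_1\widehat{\Phi}_1}\delta_2=0$. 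One direction is immediate: if $(\mathfrak{h}^{\varepsilon},\mathfrak{e}^{\varepsilon})$ solves \eqref{35"} with the prescribed initial data, then taking the divergence of \eqref{35"} gives $\varepsilon\partial_t\delta_1=\varepsilon\partial_t\delta_2=0$, hence $\delta_1\equiv\delta_2\equiv0$, so all the extra terms in \eqref{secsym2} vanish and \eqref{secsym2} holds with the same data.

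For the converse — the substantive part, and the one actually used in Section \ref{well} to return from \eqref{50"} to \eqref{78'} — let $(\mathfrak{h}^{\varepsilon},\mathfrak{e}^{\varepsilon})$ be a classical solution of \eqref{secsym2} with $\delta_1=\delta_2=0$ at $t=0$. Applying $\hat\eta^{-1}$ to the two equations above and writing $\widetilde\Pi:=\hat\eta^{-1}\Pi$, $\widetilde\Sigma:=\hat\eta^{-1}\Sigma$, $e_i:=\delta_i/\partial_1\widehat{\Phi}_1$, one is left with the purely algebraic relations $\tfrac1\varepsilon\widetilde\Pi-\vec\nu\times\widetilde\Sigma+\vec\nu\,e_1=0$ and $\tfrac1\varepsilon\widetilde\Sigma+\vec\nu\times\widetilde\Pi+\vec\nu\,e_2=0$. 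Eliminating $\widetilde\Sigma$ and using $\vec\nu\times(\vec\nu\times a)=\vec\nu(\vec\nu\cdot a)-|\vec\nu|^2a$ together with $\vec\nu\times\vec\nu=0$ gives $\bigl(\tfrac1\varepsilon-\varepsilon|\vec\nu|^2\bigr)\widetilde\Pi+\varepsilon\vec\nu(\vec\nu\cdot\widetilde\Pi)+\vec\nu\,e_1=0$; dotting with $\vec\nu$ yields $\vec\nu\cdot\widetilde\Pi=-\varepsilon|\vec\nu|^2e_1$, and reinserting this gives $\widetilde\Pi=-\varepsilon\vec\nu\,e_1$, which is valid as soon as $\varepsilon^2|\vec\nu|^2\ne1$, i.e. for $\varepsilon$ small by \eqref{99}. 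Symmetrically $\widetilde\Sigma=-\varepsilon\vec\nu\,e_2$, hence $\Pi=-\varepsilon\tfrac{\hat\eta\vec\nu}{\partial_1\widehat{\Phi}_1}\delta_1$ and $\Sigma=-\varepsilon\tfrac{\hat\eta\vec\nu}{\partial_1\widehat{\Phi}_1}\delta_2$.

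Now take the divergence of the first identity: since ${\rm div}(\nabla\times\mathfrak{E}^{\varepsilon})=0$ we obtain $\varepsilon\partial_t\delta_1={\rm div}\,\Pi=-\varepsilon\,{\rm div}\bigl(\tfrac{\hat\eta\vec\nu}{\partial_1\widehat{\Phi}_1}\delta_1\bigr)$, i.e. $\delta_1$ solves the homogeneous linear transport equation $\partial_t\delta_1+{\rm div}\bigl(\tfrac{\hat\eta\vec\nu}{\partial_1\widehat{\Phi}_1}\delta_1\bigr)=0$, and likewise $\delta_2$. Since $\delta_1,\delta_2$ vanish at $t=0$ and the coefficients are smooth, uniqueness for the transport Cauchy problem forces $\delta_1\equiv\delta_2\equiv0$ on $[0,T]$; therefore $\Pi=\Sigma=0$, so $(\mathfrak{h}^{\varepsilon},\mathfrak{e}^{\varepsilon})$ solves \eqref{35"} with the prescribed initial data. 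By the assumed uniqueness of the Cauchy problem for \eqref{35"} it coincides on $[0,T]$ with the solution of \eqref{35"} (equivalently, one may argue the other way and invoke uniqueness for \eqref{secsym2}), which proves the lemma.

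The two points requiring care are the invertibility of the $6\times 6$ algebraic system for $(\widetilde\Pi,\widetilde\Sigma)$ — which is exactly the content of $\varepsilon^2|\vec\nu|^2\ne1$, i.e. of \eqref{99}, so that ``$\varepsilon$ sufficiently small'' is the natural hypothesis — and, if one wishes to draw the conclusion on the half-space $\Omega^-$ rather than for a genuine Cauchy problem, the observation that for the choice \eqref{nu} the first component of the transport field $\hat\eta\vec\nu/\partial_1\widehat{\Phi}_1$ vanishes on $\omega$ (because $\widehat{\Psi}|_{x_1=0}=\hat\varphi$ and $\partial_1\widehat{\Phi}_1|_{x_1=0}=1$), so that $\{x_1=0\}$ is characteristic for the transport equations and no boundary data for $\delta_1,\delta_2$ is needed in order to conclude they vanish.
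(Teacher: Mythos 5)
Your proof is correct and follows essentially the same route as the paper: reduce \eqref{secsym2} algebraically (using $\varepsilon|\vec\nu|\neq1$, i.e.\ \eqref{99}) to transport equations for ${\rm div}\,\mathfrak{h}^{\varepsilon}$ and ${\rm div}\,\mathfrak{e}^{\varepsilon}$, note that the transport field has vanishing normal component on $\omega$ so no boundary data is needed, and propagate the zero initial divergences to conclude the two systems have the same solutions. The only cosmetic difference is that you solve the $6\times6$ algebraic system for the residuals explicitly (getting $\widetilde\Pi=-\varepsilon\vec\nu\,e_1$, $\widetilde\Sigma=-\varepsilon\vec\nu\,e_2$), whereas the paper crosses twice with $\vec\nu$ to deduce $\vec\nu\times A=\vec\nu\times B=0$; your added remarks on the easy direction and on the role of the choice \eqref{nu} for the characteristic boundary are accurate.
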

\begin{proof}
Let us set
$$
A=\hat\eta^{-1}(\partial_t  \mathfrak{h}^{\varepsilon}+{\varepsilon}^{-1}\nabla\times \mathfrak{E}^{\varepsilon}), \qquad
B=\hat\eta^{-1}(\partial_t  \mathfrak{e}^{\varepsilon}-{\varepsilon}^{-1}\nabla\times \mathfrak{H}^{\varepsilon}).
$$
Then \eqref{secsym2} can be written as
\begin{equation}
\begin{array}{ll}\label{secsym3}
\displaystyle
A -
\varepsilon \, \vec \nu\times B +
\frac{\vec\nu}{\partial_1\widehat{\Phi}_1}\,{\rm div}\,\mathfrak{h}^{\varepsilon}=0,
\qquad
B +
\varepsilon \, \vec\nu\times A +
\frac{\vec\nu}{\partial_1\widehat{\Phi}_1}\, {\rm div}\,\mathfrak{e}^{\varepsilon}=0.
\end{array}
\end{equation}
Taking the vector product of ${\vec\nu}$ with the systems in \eqref{secsym3} gives
\begin{equation}
\begin{array}{ll}\label{secsym41}
\displaystyle
\vec \nu\times A -
\varepsilon \, \vec \nu\times (\vec \nu\times B)=0,
\qquad
\vec \nu\times B +
\varepsilon \, \vec \nu\times (\vec \nu\times A)=0,
\end{array}
\end{equation}
that is
\begin{equation}
\begin{array}{ll}\label{secsym4}
\displaystyle
\vec \nu\times A -
\varepsilon \, (\vec\nu\cdot B)\vec \nu + \varepsilon \, |\vec\nu|^2 B=0,
\qquad
\vec \nu\times B +
\varepsilon \, (\vec\nu\cdot A)\vec \nu - \varepsilon \, |\vec\nu|^2 A=0.
\end{array}
\end{equation}
We take the vector product of $\varepsilon \, {\vec\nu}$ with the first system in \eqref{secsym4} and get
\begin{equation}
\begin{array}{ll}\label{secsym5}
\displaystyle

\varepsilon \, (\vec\nu\cdot A)\vec \nu - \varepsilon \, |\vec\nu|^2 A + \varepsilon^2 \, |\vec\nu|^2 \, \vec \nu\times B=0.
\end{array}
\end{equation}
For any choice of $\vec \nu\not= 0$ we may assume that $\varepsilon \, |\vec\nu|\not=1$ (this is true for $\varepsilon$ definitely small). Then by comparison of \eqref{secsym5} and the second equation in \eqref{secsym4} we infer $\vec \nu\times B =0$, and from \eqref{secsym41} also $\vec \nu\times A =0$.

Thus \eqref{secsym2} may be rewritten as
\begin{equation*}
\begin{array}{ll}\label{secsym22}
\displaystyle
\partial_t  \mathfrak{h}^{\varepsilon}+\frac{1}{\varepsilon}\nabla\times \mathfrak{E}^{\varepsilon} +\frac{\hat\eta\,\vec\nu}{\partial_1\widehat{\Phi}_1}\,{\rm div}\,\mathfrak{h}^{\varepsilon}=0,
\qquad
\displaystyle
\partial_t  \mathfrak{e}^{\varepsilon}-\frac{1}{\varepsilon}\nabla\times \mathfrak{H}^{\varepsilon} +
\frac{\hat\eta\,\vec\nu}{\partial_1\widehat{\Phi}_1}\, {\rm div}\,\mathfrak{e}^{\varepsilon}=0.
\end{array}
\end{equation*}
Applying the {div} operator to the equations gives the transport equation
$$
\partial_t  u+{\rm div}(u\vec a)=0 \quad \mbox{ in } Q^-_T,
$$
for both $u={\rm div}\,{\mathfrak{h}}^{\varepsilon}$ and $u={\rm div}\,{\mathfrak{e}}^{\varepsilon}$, where $\vec a=\hat\eta \vec\nu/{\partial_1\widehat{\Phi}_1}$.
Noticing that the first component of $\vec a$ vanishes at $x_1=0$, the transport equation doesn't need any boundary condition.
As $u_{|t=0}=0$, by a standard argument we deduce $u=0$ for $t>0$.
This fact shows the equivalence of \eqref{35"} and \eqref{secsym2}.

\end{proof}


\section{Proof of Lemma \ref{lemma1}}
\label{prooflemma1}

Given an even function $\chi\in C^\infty_0(\R)$, with $\chi=1$ on $[-1,1]$, we define
\begin{equation}
\label{def-f1}
\Psi(x_1,x') :=\chi (x_1\langle D\rangle) \, \varphi(x') \, ,
\end{equation}
where $\chi(x_1 \langle D\rangle)$ is the pseudo-differential operator with $\langle D\rangle=(1+|D|^2)^{1/2}$ being the Fourier multiplier in the
variables $x'$. From the definition it readily follows that $\Psi(0,x')=\varphi(x')$ for
all $x'\in\R^2$. Moreover,
\begin{equation}
\label{d1Psi}
\duno \Psi(x_1,x') = \chi'(x_1\langle D\rangle) \, \langle D\rangle \, \varphi(x') \, ,
\end{equation}
which vanishes if $x_1=0$. We compute
\begin{equation*}
\|\Psi(x_1,\cdot)\|^2_{H^m(\R^2)} =\int_{\R^2}\langle \xi'\rangle^{2m}\chi^2(x_1\langle \xi'\rangle)|\hat\varphi(\xi')|^2d\xi' \, ,
\end{equation*}
where $\hat\varphi(\xi')$ denotes the Fourier transform in $x'$ of $\varphi$.
It follows that
\begin{align*}
\| \Psi \|^2_{L^2_{x_1}(H^m(\R^2))} =\int_{\R}\int_{\R^2} \langle \xi'\rangle^{2m}\chi^2(x_1\langle \xi'\rangle)|\hat\varphi(\xi')|^2d\xi' \,dx_1
\\
=\int_{\R}\int_{\R^2} \langle \xi'\rangle^{2m-1}\chi^2(s)|\hat\varphi(\xi')|^2d\xi' \,ds
\le C\|\varphi\|^2_{H^{m-0.5}(\R^2)}\, .
\end{align*}
In a similar way, from \eqref{d1Psi}, we obtain
\begin{align*}
\| \duno \Psi \|^2_{L^2_{x_1}(H^{m-1}(\R^2))} =\int_{\R}\int_{\R^2} \langle \xi'\rangle^{2m-2}|\chi'(x_1\langle \xi'\rangle)\langle \xi'\rangle|^2|\hat\varphi(\xi')|^2d\xi' \,dx_1
\\
=\int_{\R}\int_{\R^2} \langle \xi'\rangle^{2m-1}|\chi'(s)|^2|\hat\varphi(\xi')|^2d\xi' \,ds
\le C\|\varphi\|^2_{H^{m-0.5}(\R^2)}\, .
\end{align*}
Iterating the same argument yields
\begin{equation*}
\| \duno^j \Psi \|^2_{L^2_{x_1}(H^{m-j}(\R^2))} \le C\, \| \varphi \|_{H^{m-0.5}(\R^2)}^2 \, ,\quad j=0,\dots,m \, .
\end{equation*}
Adding over $j=0,\dots, m$ finally gives $\Psi \in H^m(\R^3)$ and the continuity of the map $\varphi \mapsto \Psi$.

We now show that the cut-off function $\chi$, and accordingly the map $\varphi \mapsto \Psi$, can be chosen to give \eqref{diffeopiccolo}. From \eqref{d1Psi} we have
\begin{equation*}
\label{}
\duno \Psi(x_1,x') = (2\pi)^{-2}\int _{\R^2} e^{i\xi'\cdot x'}\chi'(x_1\langle \xi'\rangle) \, \langle \xi'\rangle \, \hat\varphi(\xi') \, d\xi' .
\end{equation*}
By the Cauchy-Schwarz inequality and a change of variables we get
\begin{align*}
\label{}
|\duno \Psi(x)| \le C \| \varphi\|_{H^{2}(\R^2)} \left( \int _{\R^2} |\chi'(x_1\langle \xi'\rangle)|^2 \, \langle \xi'\rangle^{-2} \, \, d\xi' \right)^{1/2}
\\
\qquad\qquad =C \| \varphi\|_{H^{2}(\R^2)} \left( \int _0^{\infty} |\chi'(x_1\langle \rho\rangle)|^2 \, \langle \rho\rangle^{-2} \, \rho\, d\rho \right)^{1/2} .
\end{align*}
We change variables again in the integral above by setting $s=x_1 \langle \rho\rangle$. It follows that
\begin{align}
\label{d1Psi2}
|\duno \Psi(x)| \le C \| \varphi\|_{H^{2}(\R^2)} \left( \int _{x_1}^{\infty} |\chi'(s)|^2 \, \frac{x_1}{s}\, \frac{ds}{x_1} \right)^{1/2}
\le C \| \varphi\|_{H^{2}(\R^2)} \left( \int _{1}^{\infty} |\chi'(s)|^2 \, \frac{ds}{s} \right)^{1/2} .
\end{align}
Given any $M>1$, we choose $\chi$ such that $\chi(s)=0$ for $|s|\ge M$, and $|\chi'(s)|\le 2/M$ for every $s$. Then from \eqref{d1Psi2} one gets
\begin{align*}
\label{d1Psi3}
|\duno \Psi(x)| \le \frac{C}{\sqrt{M}} \| \varphi\|_{H^{2}(\R^2)} .
\end{align*}
Given any $\eps>0$, if $M$ is such that $C/\sqrt{M}<\eps$, then \eqref{diffeopiccolo} immediately follows.

\medskip

The proof of Lemma \ref{lemma2} follows from Lemma \ref{lemma1}, with $t$ as a parameter. Notice also that
the map $\varphi\to \Psi$, defined by  \eqref{def-f1}, is linear and that the time regularity is conserved because, with
obvious notation, $\Psi(\dt^j \varphi) =\dt^j \Psi(\varphi)$. The conclusions of Lemma \ref{lemma2} follow directly.

\section{Proof of Lemma \ref{interpol}}\label{proofinterpol}

We write $\mathcal{U}_{n\gamma}^{\varepsilon}$ on $\omega$ as
$$
|\mathcal{U}_{n\gamma}^{\varepsilon}|^2|_{x_1=0} = - 2\int_0^\infty \mathcal{U}_{n\gamma}^{\varepsilon}\cdot\partial_1\mathcal{U}_{n\gamma}^{\varepsilon}\, dx_1,
$$
which gives
\begin{equation}
\begin{array}{ll}\label{interpol3}
\|\mathcal{U}_{n\gamma}^{\varepsilon}|_\omega\|^2_{L^2(\omega)} \le 2\| \mathcal{U}_{\gamma}^{\varepsilon}\|_{L^2(Q^+)}
\|\partial_1\mathcal{U}_{n\gamma}^{\varepsilon}\|_{L^2(Q^+)}.
\end{array}
\end{equation}
Estimating the right-hand side of \eqref{interpol3} with \eqref{130} and using the $\gamma$-homogeneity of the $H^1_{tan,\gamma}$ norm gives
$$
\gamma\|\mathcal{U}_{n\gamma}^{\varepsilon}|_\omega\|^2_{L^2(\omega)}
\le C\left(\|\mathcal{F}_\gamma\|^2_{L^2(Q^+)}
 +\|\mathcal{U}_\gamma^{\varepsilon}\|^2_{H^1_{tan,\gamma}(Q^+)}
\right).
$$
Thus the first part of \eqref{interpol1} is proved. To show the second part of \eqref{interpol1} we compute for ${\ell}=0,2,3$,
$$
\int_\omega|Z_{\ell}\mathcal{U}_{n\gamma}^{\varepsilon}|^2|_{x_1=0}\, dx'dt
= - 2\int_0^\infty\int_\omega Z_{\ell}\mathcal{U}_{n\gamma}^{\varepsilon}\cdot\partial_1Z_{\ell}\mathcal{U}_{n\gamma}^{\varepsilon}\, dxdt
=  2\int_0^\infty\int_\omega Z_{\ell}^2\mathcal{U}_{n\gamma}^{\varepsilon}\cdot\partial_1\mathcal{U}_{n\gamma}^{\varepsilon}\, dxdt,
$$
which gives
\begin{equation}
\begin{array}{ll}\label{interpol4}
\|\mathcal{U}_{n\gamma}^{\varepsilon}|_\omega\|^2_{H^1_{\gamma}(\omega)} \le 2\| \mathcal{U}_{\gamma}^{\varepsilon}\|_{H^2_{tan,\gamma}(Q^+)}
\|\partial_1\mathcal{U}_{n\gamma}^{\varepsilon}\|_{L^2(Q^+)}.
\end{array}
\end{equation}
Interpolating between \eqref{interpol3} and \eqref{interpol4} gives
\begin{equation*}
\begin{array}{ll}\label{interpol5}
\|\mathcal{U}_{n\gamma}^{\varepsilon}|_\omega\|^2_{H^{1/2}_{\gamma}(\omega)} \le 2\| \mathcal{U}_{\gamma}^{\varepsilon}\|_{H^1_{tan,\gamma}(Q^+)}
\|\partial_1\mathcal{U}_{n\gamma}^{\varepsilon}\|_{L^2(Q^+)}.
\end{array}
\end{equation*}
Applying \eqref{130} eventually gives the second part of \eqref{interpol1}. We do the same for \eqref{interpol2}. 

\bibliographystyle{plain}
\bibliography{SeTr}
\end{document}